\numberwithin{equation}{section}
\theoremstyle{plain}
\newtheorem{lemma}{Lemma}[section]
\newtheorem{theorem}[lemma]{Theorem}
\newtheorem{corollary}[lemma]{Corollary}
\theoremstyle{remark}
\newtheorem{remark}{Remark}
\theoremstyle{convention}
\def\ee{\varepsilon}
\def\aa{\alpha}
\def\bb{\beta}
\def\dd{\delta}
\def\DD{\Delta}
\def\gg{\gamma}
\def\om{\omega}
\def\ss{\sigma}
\def\ll{\lambda}
\def\LL{\Lambda}
\def\Om{\Omega} %%%Wiener space 
\def\om{\omega}
\def\pp{\partial}
\newcommand{\Cb}{\mathbb{C}}
\newcommand{\Eb}{\mathbb{E}} %Expectation
\newcommand{\Pb}{\mathbb{P}} %Wiener probabilty measure
\newcommand{\Rb}{\mathbb{R}}
\newcommand{\Tb}{\mathbb{T}}
\newcommand{\Zb}{\mathbb{Z}}
\newcommand{\Fc}{\mathcal{F}}
\newcommand{\Lc}{\mathcal{L}}
\newcommand{\Pc}{\mathcal{P}} %Leray projection
\newcommand{\bV}{\overline{V}} %bar V
\newcommand{\tV}{\widetilde{V}} %tilde V
\newcommand{\Vc}{\mathcal{V}} 
\newcommand{\bVc}{\overline{\mathcal{V}}} 
\newcommand{\tVc}{\widetilde{\mathcal{V}}} 
\newcommand{\bU}{\overline{U}} %bar U
\newcommand{\tU}{\widetilde{U}} %tilde U
\newcommand{\bP}{\overline{P}} %bar P
\newcommand{\tP}{\widetilde{P}} %tilde P
\newcommand{\mi}{\mathrm{i}} %imaginary i 
\newcommand{\tv}{\widetilde{v}}
\newcommand{\bv}{\overline{v}}
\newcommand{\bu}{\overline{u}}
\newcommand{\tu}{\widetilde{u}}
\newcommand{\Pscr}{\mathscr{P}}
\newcommand*{\rom}[1]{\expandafter\@slowromancap\romannumeral #1@}
\begin{document}

\vskip 0.125in

\title[Stochastic Primitive Equations]
{Averaging principle for the stochastic primitive equations in the large rotation limit}

\date{March 2, 2025}
%\thanks{\textit{ }}

\author[Q. Lin]{Quyuan Lin}
\address[Q. Lin]
{	School of Mathematical and Statistical Sciences \\
Clemson University\\
Clemson, SC 29634, USA.} \email{quyuanl@clemson.edu}

\author[R. Liu]{Rongchang Liu}
\address[R. Liu]{Department of Mathematics, University of Arizona, Tucson, AZ 85721, USA.}
\email{lrc666@arizona.edu}

\author[V.R. Martinez]{Vincent R. Martinez}
\address[V.R. Martinez]
{	Department of Mathematics \& Statistics\\
    CUNY Hunter College\\
    Department of Mathematics \\
    CUNY Graduate Center  \\
	New York, NY 10065, USA.} 
\email{vrmartinez@hunter.cuny.edu}

\begin{abstract}
    It is known that the unique ergodicity of the viscous primitive equations with additive white-in-time noise remains an open problem. In this work, we demonstrate that, as the rotational intensity approaches infinity, the distribution of any given strong solution, rescaled by the rotation, is attracted to the unique invariant measure of the stochastic limit resonant system. This suggests that the system exhibits nearly unique ergodicity in the large rotation limit. The proof is based on a stochastic averaging principle combined with a coupling argument.
\end{abstract}

\maketitle

\textbf{MSC Subject Classifications}: 35Q86, 35R60, 37L40, 76D06 

\vskip0.1in

Keywords: stochastic primitive equations; fast rotation; averaging principle; invariant measures; long-time behavior; ergodicity

\section{Introduction}\label{sect:intro}
The system of three-dimensional primitive equations (PE) is a crucial model for studying large-scale oceanic and atmospheric dynamics, where the vertical scale is significantly smaller than the horizontal scale. It can be derived as the asymptotic limit of the small aspect ratio from the Boussinesq system or Navier-Stokes equations, which have been justified rigorously in \cite{azerad2001mathematical,li2019primitive,li2022primitive}.  In this work we consider the following \textit{non-dimensional} PE with additive noise over the unit torus $\Tb^3 = \Rb^3/\Zb^3$:
\begin{subequations}\label{PE-system}
\begin{align}
    &\partial_t v + v\cdot \nabla_h  v + w\pp_z v  + \aa v^\perp + \nabla_h  P   = \nu \Delta v+\sigma \partial_tW, 
    \\
    &\pp_z P   = 0,
    \\
    &\nabla_h  \cdot v + \pp_z w  = 0.
\end{align}
\end{subequations}
Here the horizontal velocity field $v=(v_1, v_2)$, the vertical velocity $w$, and the pressure $P$ are the unknowns. We denote by $\nabla_h  = (\partial_x, \partial_y)$ and $\nabla= (\partial_x, \partial_y, \partial_z)$ the 2D horizontal and full 3D gradients. The positive constant $\nu$ is the viscosity, and $ \alpha v^\perp = \alpha (-v_2, v_1)$ represents the Coriolis force with parameter $ \alpha>0$. Here $W$ is a cylindrical Wiener process and $\sigma\partial_tW$ is  the additive noise. Note that we have omitted the coupling with temperature in \eqref{PE-system} for simplicity and clarity. In this case, \eqref{PE-system} is also referred to as the hydrostatic Navier-Stokes equations.

These equations have been extensively studied over the past few decades. 
In the deterministic setting, the existence of weak solutions was established in \cite{lions1992new,lions1992equations}, although uniqueness remains an open problem. The global existence and uniqueness of strong solutions have been demonstrated in \cite{cao2007global,kobelkov2006existence,kukavica2007regularity,hieber2016global}, with further discussion in \cite{li2016recent,hieber2020approach}. The well-posedness results have been extended to the stochastic system with additive noise, i.e., system \eqref{PE-system}, in \cite{guo20093d}, where the existence of a random pull-back attractor is obtained.

We are interested in the ergodic behavior of the solutions to system \eqref{PE-system}. When the noise is bounded, unique ergodicity and mixing have been shown in \cite{boulvard2023controllability,boulvard2022mixing}. However, for the white noise perturbation in system \eqref{PE-system}, unique ergodicity remains an open problem due to the weak tail estimates of the solutions. Currently, only logarithmic moment bounds are available, as shown in \cite{glatt2014existence}, where the existence and smoothness of invariant measures have been established.  We refer the reader to \cite{kuksin2012mathematics} for more information on the ergodic theory of SPDEs, which has been widely studied over the years.   

In this work, we prove that, in the limit of fast rotation, system \eqref{PE-system} is close to being uniquely ergodic. More precisely, let \( P_{t}^{\alpha}(v,\cdot)\) be the transition probabilities of the strong solution $v \in H^1$ to system \eqref{PE-system}, and let \(\mathcal{U}_t\) be defined by
\begin{align}\label{e.121701}
    \mathcal{U}_t(\bv, \tv) = (\bv, e^{\alpha Jt} \tv), \quad J=\left(\begin{array}{cc}0 & -1 \\ 1 & 0\end{array}\right), 
\end{align}
for any \(v = (\bv, \tv) \in H^1\), where \(\bv = \bP v\) (see \eqref{e.121201} below) is the barotropic mode, and \(\tv = (I - \bP) v\) is the baroclinic mode. In the limit of fast rotation, we derive the following limiting system:
\begin{align}\label{e.080602_intro}
    \begin{split}
        &\partial_t\bV+\Pc_h(\bV\cdot\nabla_h  \bV) = \nu\DD_h \bV + \Pc_h\overline{\sigma} \partial_t\widetilde{W},\\
        &\partial_t\tV +\bV\cdot\nabla_h \tV+\frac12\tV^{\perp}\left(\nabla_h ^{\perp}\cdot\bV\right)=\nu\Delta \tV+ \sqrt{\widetilde{Q}}\partial_t\widetilde{W}, 
    \end{split}
\end{align}
where $\Pc_h$ is the 2D Leray projection, $\widetilde{W}$ is a cylindrical Wiener process defined on a possibly different probability space than $W$, and $\widetilde{Q}$ is an ``averaged" covariance operator (defined precisely in \eqref{def:Q_avg}). Let \(\Pscr(H^1)\) be the space of probability measures on \(H^1\). Then an informal version of our main result can be stated as follows:

\noindent {\bf Main Theorem}. \label{mainthm}{\it Under mild non-degenerate assumptions on the noise, there exists a unique probability measure $\mu$ such that for any $v\in H^2$, we have
\[
    \mathcal{U}_t^* P_{t}^{\alpha}(v, \cdot) \to \mu
\]
weakly in \(\Pscr(H^1)\) as \(\alpha \to \infty\) and \(t \to \infty\), where $\mathcal{U}_t^* P_{t}^{\alpha}(v, \cdot)$ denotes the pushforward of the measure \(P_{t}^{\alpha}(v, \cdot)\) by \(\mathcal{U}_t\). Moreover, $\mu$ is the unique invariant probability measure corresponding to the limit resonant system \eqref{e.080602_intro}.}

{Several recent results in the literature \cite{Nguyen2023, Glatt-HoltzMondaini2024, Glatt-HoltzMartinezNguyen2024, Nguyen2024} have developed frameworks in various scenarios under which the dynamics of a stochastic system, $P_t$, and its perturbation, $P^\varepsilon_t$, ``commute" in the sense that $\lim_{t\to\infty}\lim_{\varepsilon\to0^+}P^\varepsilon_t=\lim_{\varepsilon\to0^+}\lim_{t\to\infty}P^\varepsilon_t$. Within the context of those frameworks, such results are afforded by the availability of geometric ergodicity for both the original system and its perturbation, in addition to suitable exponential moment bounds that are uniform in the singular parameter, $\varepsilon$. From this point of view, informally speaking, the Main Theorem establishes the order of limits $\lim_{t\to\infty}\lim_{\varepsilon\to0^+}P^\varepsilon_t=\mu$, where $\mu$ is the unique invariant probability measure of the system obtained in the singular limit $\varepsilon\to0^+$. It remains an outstanding open problem to establish unique ergodicity for \eqref{PE-system}, \textit{even for sufficiently large, but finite values, of the rotation rate}, which is necessary to address the issue of passing to the limit in the opposite order.  While it is conceivable that a large class of ``physically constructed" invariant measures for \eqref{PE-system}, e.g., those constructed by a Bogoliubov-Krylov-type time-averaging procedure, converge to $\mu$, such a result appears to be precluded by the lack of access to better moment bounds. Indeed, in the context of \eqref{PE-system}, the best-known bounds, obtained in \cite{glatt2014existence}, are \textit{triple logarithmic} (see \cref{l.080902}).  Nevertheless, our result is able to successfully expand upon the stochastic averaging principle framework developed for the three-dimensional rotating Navier-Stokes equation in \cite{flandoli2012stochastic} to show that (upon rescaling by \( \mathcal{U}_t \)) the distribution of \textit{any strong solution} to the system \eqref{PE-system} is attracted by the unique probability measure \( \mu \) as \(\alpha \to \infty\) and \(t \to \infty\).
}

In the existing literature, it is well-known that rapid rotation induces strong dispersion and an averaging mechanism that weakens nonlinear effects in fluid systems. For deterministic systems, such a mechanism allows for establishing the global well-posedness in the Navier-Stokes case and prolongs the lifespan of the solution in the Euler case \cite{babin1997regularity,babin1999global,babin1999regularity} (see also \cite{chemin2006mathematical,dutrifoy2005examples,embid1996averaging,koh2014strichartz} and references therein). Rotation also plays a significant role in the dynamics of the deterministic PE. For analytic initial data, it has been shown that the lifespan of solutions to the 3D inviscid PE or the 3D PE with only vertical viscosity can be extended if the rotation is sufficiently fast \cite{ghoul2022effect,lin2022effect}. The implementation of this mechanism in SPDEs was first explored in \cite{flandoli2012stochastic}, where global well-posedness up to exceptional events was established for the 3D rotating Navier-Stokes equations with additive noise, given sufficiently high rotation intensity. For the same system, with no-slip boundary conditions and well-prepared initial data, \cite{wang2024zero} studied  the convergence of solutions in the limit of infinite rotation and vanishing vertical viscosity.  To the best of our knowledge, our work seems to be the first to examine the effects of fast rotation on the ergodic behavior of SPDEs.

The overall strategy for proving the main theorem consists of two major steps. First, we show that the rescaled solutions of system \eqref{e.080602} converge in law to solutions of the limit resonant system \eqref{e.080602_intro}, based on a stochastic averaging principle inspired from \cite{flandoli2012stochastic}. Next, using coupling techniques, we demonstrate that the limit resonant system \eqref{e.080602_intro} possesses a unique invariant measure that is exponentially mixing. In the following discussion, we address key challenges in the analysis that arise from the special structure of the PE system.

For the stochastic Navier-Stokes equations \cite{flandoli2012stochastic}, the analysis of the resonant operator arising from the nonlinear terms follows similarly from the deterministic results \cite{babin1997regularity,babin1999global,babin1999regularity}. These results are based on analyzing the action of the Poincar\'e-Coriolis rotation operator on the Fourier-curl basis, which reveals the existence of frequency selection resonances. In contrast, no such selection occurs in the PE, where the resonant operator exhibits resonance across all frequencies. This phenomenon was first identified in \cite{ghoul2022effect,lin2022effect}, which rely on the barotropic-baroclinic decomposition of the system, followed by a further decomposition of the baroclinic mode along the eigenbasis of the rotation matrix. The methods from \cite{ghoul2022effect,lin2022effect} clearly apply to our viscous system \eqref{PE-system} as they share the same nonlinear terms, however, we adopt a different approach, which may be of independent interest. Specifically, we exploit the isomorphism between the complex plane $\mathbb{C}$ and $\mathbb{R}^2$, interpreting the action of the rotation matrix on $\mathbb{R}^2$ as a multiplication operator on $\mathbb{C}$. This coordinate-free approach significantly simplifies the relevant computations involving the differential operators. 

The analysis of resonance in the stochastic integral shares a similar spirit with that in \cite{flandoli2012stochastic}, but it exhibits distinct behavior. As shown in \cite{flandoli2012stochastic}, frequency selection resonance exists in the averaged covariance operator for the stochastic Navier-Stokes equations. For the PE system \eqref{PE-system}, the barotropic part of the stochastic integral becomes entirely resonant without oscillation, while the baroclinic component is highly oscillatory, with all frequencies resonating in the averaged covariance operator.

To address the convergence of oscillatory terms as the rotation intensity $\alpha \to \infty$, a meticulous analysis of relevant moment estimates is necessary to apply the abstract stochastic averaging results in \cite{flandoli2012stochastic}, primarily due to the loss of horizontal derivatives in the vertical velocity $w$, which is a special feature of the PE system and one of the major obstacles to proving unique ergodicity. As part of this analysis, we employ the method from \cite{glatt2014existence} to derive logarithmic moment bounds for higher-order Sobolev norms. Although these bounds are insufficient to establish unique ergodicity, they turn out to be sufficient to prove the convergence of the solution processes in probability, which is adequate for our purposes.

The limit resonant system consists of a barotropic part and a baroclinic part, where the former is the 2D stochastic Navier-Stokes equations, and the latter is a 3D stochastic linear heat equation, transported and stretched by the barotropic part. We emphasize that the natural phase space for the PE is $H^1$ rather than $L^2$, since the uniqueness of weak solutions is still an open problem. Therefore, to prove the main result, we show that the limit resonant system is exponentially mixing in $H^1$, where there is no cancellation of nonlinear terms. To overcome this issue, we first demonstrate exponential mixing in $ L^2 $ through a coupling argument \cite{butkovsky2020generalized}, and then enhance the mixing to $ H^1 $ using the parabolic smoothing property of the system,  drawing on similar ideas from \cite{kuksin2012mathematics}. To address the transport and stretching terms in the 3D stochastic heat equation, we perform careful anisotropic estimates to leverage the 2D nature of the barotropic part.

The paper is organized as follows. In \cref{s.121501}, we provide the mathematical preliminaries and settings, and state our main result more precisely. \cref{s.121502} is devoted to deriving the limit resonant system. In \cref{s.121503}, we prove the convergence in law of the rescaled solutions of the PE system \eqref{PE-system} to that of the limit resonant system. The exponential mixing of the limit resonant system is shown in \cref{s.121504}. Finally, the proof of our main result is given in \cref{sect:final}.

\section{Mathematical Preliminaries}\label{s.121501}
Let $\Tb^3 = \Rb^3/\Zb^3$ be the unit torus,  $\LL = \sqrt{-\Delta}$, and $\bP$ be the projection operator defined as 
\begin{align}\label{e.121201}
    \bP f = \int_0^1f(x,y,z)dz.
\end{align}
We can write $f=\overline{f}+\widetilde{f}$, where $\overline{f}:={\bP}f$ is called the \textit{barotropic mode} and $\widetilde{f}:=(I-\bP)f$ is called the \textit{baroclinic mode}. Let 
\[
    H = \left\{f\in L^2(\Tb^3,\Rb^2): f \text{ has zero mean over $\Tb^3$ and is even in $z$}\right\}.
\]
We also denote $H^s = W^{s,2}\cap H$ and $\|\cdot\|_{H^s}$ the corresponding norm, where $W^{s,2}$ is the usual Sobolev space of order $s\in \Rb$. {Note that as $f\in H$ has zero mean, the homogeneous Sobolev norm $\|f\|_{\dot{W}^{s,2}}=: \|f\|_s$ is equivalent to $\|f\|_{H^s}$.} The space of Hilbert-Schmidt operators from $H\to H^s$ is denoted by $L_2(H,H^s)$ with norm 
\begin{align*}
    \|A\|_{L_2(H,H^s)}: =\left(\sum_{j=1}^{\infty}\|A \boldsymbol{e}_j\|_{H^s}^2\right)^{1/2}, \, A\in L_2(H,H^s),
\end{align*}
where $\{\boldsymbol{e}_j\}_{j\in \mathbb N}$ is a complete orthonormal basis in $H$.  
We assume that $\sigma$ is a self-adjoint bounded linear operator on $H$ such that $\ss\in L_2(H,H^2)$ and $\bP\ss=\ss \bP$. Besides, 
$W = \sum_{k=1}^{\infty}W_j\boldsymbol{e}_j$
is the cylindrical Wiener process in $H$, where $W_k$ is a sequence of independent Brownian motions. 

The vertical velocity $w$ is determined by the horizontal velocity $v$ through the relation
\begin{align}\label{e.081104}
    w(v)(x,y,z) = -\int_0^z\nabla_h \cdot v(x,y,\zeta)d\zeta.
\end{align}
In addition, let 
$J=\left(\begin{array}{cc}0 & -1 \\ 1 & 0\end{array}\right)$
be the rotation matrix. It is clear that for all $\alpha \in \mathbb R$, $t\geq 0$, and $s\in \mathbb R$, one has
$
    \|e^{\aa J t}u\|_{s} = \|u\|_{s}
$
for any $u\in H^s$.

Applying the notations above, 
the PE can then be written in the barotropic-baroclinic form as 
\begin{subequations}\label{e.080605}
    \begin{align}
    & \partial_t\bv  + \Pc_h \Big(\bv\cdot \nabla_h  \bv + \bP \Big((\nabla_h \cdot \tv) \tv + \tv\cdot \nabla_h  \tv \Big) \Big) = \nu\DD_h \bv + \Pc_h\overline{\sigma}\partial_tW, \label{e.080603} \\
    & \partial_t \tv + \tv \cdot \nabla_h  \tv + \tv \cdot \nabla_h  \bv + \bv \cdot \nabla_h  \tv+w(\tv) \partial_z \tv - \bP\Big(\tv \cdot \nabla_h  \tv + (\nabla_h  \cdot \tv) \tv \Big)+ \aa J\tv \nonumber
    \\
    &\hspace{3cm}  = \nu\Delta \tv+\widetilde{\sigma}\partial_tW . \label{e.080604}
    \end{align}
\end{subequations}
Here $\Pc_h$ denotes the 2D Leray projection. It is well-known that the system \eqref{e.080605} is globally well-posed in $H^1$ \cite{guo20093d} with pathwise solutions, $v=(\bv,\tv)$, satisfying $v\in C([0,T];H^1)\cap L^2(0,T;H^2)$ almost surely for any $T>0$. The solution $v$ depends on $\aa$ and, when necessary, we will express this dependence by $v^{\aa}$.

To state the non-degeneracy condition on the noise in our main result, we let 
\begin{align}\label{e.011501}
    \{(\ll_k, e_k)\}_{k\in K},\quad K:=2\pi\mathbb Z^3\setminus\{(0,0,0)\}
\end{align}
be the corresponding eigenvalues and real eigenfunctions of $-\Delta$ on $L^2(\Tb^3,\Rb)$. We denote by $\ll_{|k|}:=\ll_k$ for $k\in K$.
We distinguish between barotropic and baroclinic wave-numbers by
\[
    \overline{K}=\left\{(k_1,k_2,k_3)\in K :k_3=0\right\}, \quad \widetilde{K} = K\setminus \overline{K}.
\]
For any integer $N\geq 1$, we define the band-limited barotropic and baroclinic projections by
\begin{align*}
    \bP_N& : L^2(\Tb^3,\Rb)\to \mathrm{span}\left\{e_k: \ll_k\leq \ll_N,\, k\in \overline{K}\right\},\\
    \tP_N&: L^2(\Tb^3,\Rb)\to \mathrm{span}\left\{e_k: \ll_k\leq \ll_N,\, k\in \widetilde{K}\right\},
\end{align*}
and denote by $P_N$ the projection to the modes with wave-number $k$ such that $\ll_k\leq \ll_N$. {Note that these projections, when applied to $H$, are understood as acting component-wise.} Note that $P_N=\bP_N+\tP_N$. 
Lastly, we define the averaged covariance operator of the baroclinic component of the noise by
\[
    \widetilde{Q} = \lim_{T\to\infty}\frac{1}{T}\int_0^{T}e^{J\tau}(I-\bP)\sigma^2(I-\bP)e^{-J\tau} d\tau. 
\]
Recall $\mathcal{U}_t$ defined as in \eqref{e.121701}. We may now state a formal version of the \textit{first claim} of main result (as stated in \cref{sect:intro}) of this article:

\begin{theorem}\label{t.121702}
    Given $\ss\in L_2(H,H^2)$, there exists $N>0$, independent of $\alpha$, such that if
    \begin{align*}
        \bP_N H\subset \mathrm{Range}(\bP\Pc_h\ss)\quad\text{and}\quad \tP_N H\subset \mathrm{Range}\left(\widetilde{Q}^{1/2}\right),
    \end{align*}
    then there exists a unique probability measure, $\mu\in\Pscr(H^1)$, such that for any \(v \in H^2\), 
    \[
        \mathcal{U}_t^* P_{t}^{\alpha}(v, \cdot) \to \mu
    \]
    weakly in \(\Pscr(H^1)\) as \(\alpha \to \infty\) and \(t \to \infty\). 
\end{theorem}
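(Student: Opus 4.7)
The plan is to interpolate the transition probability $R_t^*(v,\cdot)$ of the limit resonant system \eqref{e.080602_intro} (started from $v\in H^1$) between $\mathcal{U}_t^*P_t^{\alpha}(v,\cdot)$ and $\mu$, and estimate the two resulting pieces separately. Fixing a metric $d_{\mathrm{BL}}$ on $\Pscr(H^1)$ metrizing weak convergence (e.g., the bounded Lipschitz metric), the triangle inequality gives
\begin{align*}
d_{\mathrm{BL}}\bigl(\mathcal{U}_t^*P_t^{\alpha}(v,\cdot),\mu\bigr)\leq d_{\mathrm{BL}}\bigl(\mathcal{U}_t^*P_t^{\alpha}(v,\cdot),R_t^*(v,\cdot)\bigr)+d_{\mathrm{BL}}\bigl(R_t^*(v,\cdot),\mu\bigr),
\end{align*}
and each piece is controlled by one of the two principal ingredients built in earlier sections.

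For the first piece, the stochastic averaging principle of \cref{s.121503} asserts that for each fixed $t>0$ and each $v\in H^2$, the law of the rescaled process $s\mapsto\mathcal{U}_s(v^{\alpha}(s))$ on $C([0,t];H^1)$ converges as $\alpha\to\infty$ to the law of a strong solution of \eqref{e.080602_intro} started at $v$; extracting the marginal at $s=t$ yields $d_{\mathrm{BL}}(\mathcal{U}_t^*P_t^{\alpha}(v,\cdot),R_t^*(v,\cdot))\to 0$. The non-degeneracy hypotheses $\bP_N H\subset\mathrm{Range}(\bP\Pc_h\sigma)$ and $\tP_N H\subset\mathrm{Range}(\widetilde Q^{1/2})$ are exactly what is needed to run this averaging in both the barotropic and baroclinic channels.

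For the second piece, the exponential mixing result of \cref{s.121504} yields a unique invariant probability measure $\mu\in\Pscr(H^1)$ for \eqref{e.080602_intro} together with a decay bound of the form $d_{\mathrm{BL}}(R_t^*(v,\cdot),\mu)\leq C(v)e^{-\gamma t}$, so that this piece vanishes as $t\to\infty$. Uniqueness of $\mu$ in the statement of \cref{t.121702} is then inherited directly from uniqueness of the invariant measure of the limit system. Combining the two estimates, one lets $\alpha\to\infty$ first (killing the first piece for each fixed $t$) and then $t\to\infty$ (killing the second), yielding the claim.

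The main obstacle, in my view, lives inside the averaging step: since only triple-logarithmic moment bounds are available for \eqref{PE-system} (cf.\ \cref{l.080902}), both the tightness of the laws of $\mathcal{U}_\cdot(v^{\alpha}(\cdot))$ in the appropriate path space and the identification of any limit point (which requires averaging the highly oscillatory factors $e^{\pm\alpha Jt}$ against the nonlinear terms and against the stochastic integral to produce $\widetilde{Q}$) must be carried out with only minimal regularity input; the $H^2$ hypothesis on $v$ is precisely what is needed to bootstrap just enough moment control for convergence in probability, rather than merely in law of finite-dimensional marginals. A secondary subtlety is that the natural phase space here is $H^1$ rather than $L^2$; this is handled in \cref{s.121504} by first establishing $L^2$ exponential mixing via a coupling argument \emph{\`a la} \cite{butkovsky2020generalized} and then upgrading to $H^1$ through parabolic smoothing, which is essential for closing the above estimates in the correct topology.
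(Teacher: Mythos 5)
Your proposal matches the paper's proof of \cref{t.121702} essentially verbatim: the paper also interpolates through the Markov kernel of the limit resonant system, using \cref{thm:convergence_C} to control the $\alpha\to\infty$ discrepancy for each fixed $t$ and \cref{t.121701} to control the $t\to\infty$ discrepancy, phrased there via test functions $\varphi\in C^1(H^1)$ rather than the bounded--Lipschitz metric. One small misattribution: the non-degeneracy hypotheses on the noise are not needed for the averaging step (\cref{thm:convergence_A}--\ref{thm:convergence_C} require no non-degeneracy); they are used only to obtain the exponential mixing of the limit resonant system in \cref{t.121701} via the coupling argument.
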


The second claim of the main result is that $\mu$ is the unique invariant probability measure corresponding to the limit resonant system. In order to make this identification, we will make use of geometric ergodicity of the limit resonant system (under appropriate conditions on the noise); this step will be executed in \cref{s.121504}. Before doing so, we will develop a formal derivation of the limit resonant system in \cref{s.121502}.

\subsection*{Convention}
The universal constant $C$ appearing in the paper may change from line to line. We will emphasize the dependence of $C$ on some parameters when necessary.

\section{The limit resonant system}\label{s.121502}
In this section, we derive the limit resonant system of the PE obtained as $\aa\to \infty$. Note that there is no rotation acting directly on the barotropic component. Introducing the new variable $\tu(t) = e^{\aa Jt}\tv(t)$, then by equation \eqref{e.080604}, the new variable satisfies
\begin{align}\label{e.081001}
    \begin{split}
        \partial_t\tu &= \aa J e^{\aa Jt}\tv + e^{\aa Jt}\partial_{t}\tv\\
        & = \aa J e^{\aa Jt}\tv - e^{\aa Jt}\left(\tv \cdot \nabla_h  \tv + \tv \cdot \nabla_h  \bv + \bv \cdot \nabla_h  \tv+w(\tv) \partial_z \tv - \bP\Big(\tv \cdot \nabla_h  \tv + (\nabla_h  \cdot \tv) \tv \Big)\right)\\
        & \qquad - e^{\aa Jt}\aa\tv^{\perp}+\nu\Delta e^{\aa Jt}\tv + e^{\aa Jt}\widetilde{\sigma}\partial_tW\\
        &= - e^{\aa Jt}\left(\tv \cdot \nabla_h  \tv + \tv \cdot \nabla_h  \bv + \bv \cdot \nabla_h  \tv+w(\tv) \partial_z \tv - \bP\Big(\tv \cdot \nabla_h  \tv + (\nabla_h  \cdot \tv) \tv \Big)\right)\\
        &\qquad +\nu\Delta\tu + e^{\aa Jt}\widetilde{\sigma}\partial_tW.
    \end{split}
\end{align}
Note that by \cref{l.081002}, 
\begin{align*}
    e^{\aa Jt}\left(\tv \cdot \nabla_h  \bv + \bv \cdot \nabla_h  \tv\right) &= e^{\aa Jt}\left(\left(e^{-\aa Jt}\tu\right) \cdot \nabla_h  \bv + \bv \cdot \nabla_h  \left(e^{-\aa Jt}\tu\right)\right)\\
    & = \left(e^{-\aa Jt}\tu\right) \cdot \nabla_h  \left(e^{\aa Jt}\bv\right) + \bv \cdot \nabla_h  \tu\\
    & = \frac12\left(\tu\cdot \nabla_h  \bv -\tu^{\perp}\cdot \nabla_h \bv^{\perp} \right) + \frac12e^{2\alpha Jt}\left(\tu\cdot \nabla_h  \bv -\tu\cdot \nabla_h ^{\perp}\bv^{\perp} \right)+ \bv \cdot \nabla_h  \tu\\
    & = \frac12\tu^{\perp}\left(\nabla_h ^{\perp}\cdot \bv\right) + \frac12e^{2\alpha Jt}\left(\tu\cdot \nabla_h  \bv -\tu\cdot \nabla_h ^{\perp}\bv^{\perp} \right)+ \bv \cdot \nabla_h  \tu, 
\end{align*}
where we used 
\[\tu\cdot \nabla_h  \bv -\tu^{\perp}\cdot \nabla_h \bv^{\perp} = \tu^{\perp}\left(\nabla_h ^{\perp}\cdot \bv\right)\] 
due to $\nabla_h \cdot \bv =0$. 
Next, we apply \cref{l.081002} again and obtain that 
\begin{align}\label{e.080607}
    \begin{split}
        \tv \cdot \nabla_h  \tv& = \left(e^{-\aa Jt}\tu\right) \cdot \nabla_h \left(e^{-\aa Jt}\tu\right)\\
        & = \frac12e^{-2\aa Jt}\left(\tu\cdot \nabla_h  \tu -\tu^{\perp}\cdot \nabla_h \tu^{\perp}\right) + \frac12 \left(\tu\cdot \nabla_h  \tu -\tu\cdot \nabla_h ^{\perp}\tu^{\perp}\right)\\
        & = \frac12e^{-2\aa Jt}\left(\tu\cdot \nabla_h  \tu -\tu^{\perp}\cdot \nabla_h \tu^{\perp}\right)+\frac14 \nabla_h |\tu|^2 + \frac12 J\left(\nabla_h  \tu\right)^{T}\tu^{\perp},\\
    \end{split}
\end{align}
and 
\begin{align}\label{e.080608}
    \begin{split}
        \left(\nabla_h \cdot\tv\right)\tv& =\left(\nabla_h  \cdot  e^{-\aa Jt}\tu\right)\left(e^{-\aa Jt}\tu\right) \\
        & =\frac12e^{-2\aa Jt}\left((\nabla_h \cdot\tu) \tu - (\nabla_h \cdot \tu^\perp) \tu^\perp\right)+ \frac12\left(\left(\nabla_h \cdot \tu\right)\tu-\left(\nabla_h ^{\perp}\cdot \tu\right)\tu^{\perp}\right)\\
        & =\frac12e^{-2\aa Jt}\left(\tu\cdot \nabla_h  \tu - \tu^{\perp}\cdot \nabla_h \tu^{\perp}\right)+\frac14 \nabla_h |\tu|^2 - \frac12 J\left(\nabla_h  \tu\right)^{T}\tu^{\perp}.
    \end{split}
\end{align}
Consequently, 
\begin{align}\label{e.080609}
    \begin{split}
        \tv \cdot \nabla_h  \tv + (\nabla_h  \cdot \tv) \tv = e^{-2\aa Jt}\left(\tu\cdot \nabla_h  \tu - \tu^{\perp}\cdot \nabla_h \tu^{\perp}\right)+\frac12\nabla_h |\tu|^2. 
    \end{split}
\end{align}
Since $e^{\aa Jt}$ commutes with $\bP$, we obtain from \eqref{e.080607} and \eqref{e.080609} that 
\begin{align*}
    &e^{\aa Jt}\left( \tv \cdot \nabla_h  \tv- \bP\Big(\tv \cdot \nabla_h  \tv + (\nabla_h  \cdot \tv) \tv \Big)\right)\\
    & = \frac12e^{-\aa Jt}\left(\tu\cdot \nabla_h  \tu -\tu^{\perp}\cdot \nabla_h \tu^{\perp}\right) + \frac12 e^{\aa Jt}\left(\tu\cdot \nabla_h  \tu -\tu\cdot \nabla_h ^{\perp}\tu^{\perp}-\bP\nabla_h |\tu|^2\right)\\ 
    &\quad - e^{-\aa Jt}\bP\left(\tu\cdot \nabla_h  \tu - \tu^{\perp}\cdot \nabla_h \tu^{\perp}\right)\\
    &=\frac12e^{-\aa Jt}(I-2{\bP})\left(\tu\cdot \nabla_h  \tu -\tu^{\perp}\cdot \nabla_h \tu^{\perp}\right) + \frac12 e^{\aa Jt}\left(\tu\cdot \nabla_h  \tu -\tu\cdot \nabla_h ^{\perp}\tu^{\perp}-\bP\nabla_h |\tu|^2\right).  
\end{align*}
We also have 
\begin{align*}
    e^{\aa Jt}w(\tv) \partial_z \tv = -\int_0^z\nabla_h  \cdot \tv(x,y,\zeta)d\zeta\partial_{z}\tu = w(e^{-\aa Jt}\tu)\partial_{z}\tu.
\end{align*}
Therefore, we obtain the equation for $\tu$: 
\begin{align}\label{e.080606}
    \begin{split}
        \partial_t\tu 
        &= -\Bigg(\bv \cdot \nabla_h  \tu+ \frac12\tu^{\perp}\left(\nabla_h ^{\perp}\cdot \bv\right) + \frac12e^{2\alpha Jt}\left(\tu\cdot \nabla_h  \bv -\tu\cdot \nabla_h ^{\perp}\bv^{\perp} \right) \\
        &\quad +\frac12e^{-\aa Jt}(I-2{\bP})\left(\tu\cdot \nabla_h  \tu -\tu^{\perp}\cdot \nabla_h \tu^{\perp}\right) + \frac12 e^{\aa Jt}\left(\tu\cdot \nabla_h  \tu -\tu\cdot \nabla_h ^{\perp}\tu^{\perp}-\bP\nabla_h |\tu|^2\right)  \\
        &\quad +w(e^{-\aa Jt}\tu)\partial_{z}\tu\Bigg)+\nu\Delta\tu + e^{\aa Jt}\widetilde{\sigma}\partial_tW.
    \end{split}
\end{align}
From \eqref{e.080609} and the fact $\Pc_h{\bP}(\nabla_h |\tu|^2)=0$ we also obtain the equation of $\bv$: 
\begin{align}
    \begin{split}\label{e.080610}
        \partial_t\bv  + \Pc_h \Big(\bv\cdot \nabla_h  \bv + e^{-2\aa Jt}\bP\left(\tu\cdot \nabla_h  \tu - \tu^{\perp}\cdot \nabla_h \tu^{\perp}\right) \Big) = \nu\DD_h \bv + \Pc_h\overline{\sigma}\partial_tW.
    \end{split}
\end{align}
The system consisting of the two equations above for $(\bv, \tu)$ is equivalent to the original PE system \eqref{e.080605} with the same initial data through the change of variable 
\begin{align*}
    (\bv, \tu) = (\bv, e^{\aa Jt}\tv). 
\end{align*}

There are then two cases for the limit resonant system.  Firstly, if there is no noise acting on the baroclinic component, then the limit resonant system is 
\begin{align}\label{e.080601}
    \begin{split}
        &\partial_t\bV+\Pc_h(\bV\cdot\nabla_h  \bV) = \nu\DD_h \bV + \Pc_h\overline{\sigma} \partial_tW,\\
        &\partial_t\tV +\bV\cdot\nabla_h \tV+\frac12\tV^{\perp}\left(\nabla_h ^{\perp}\cdot\bV\right)=\nu\Delta \tV. 
    \end{split}
\end{align}

Secondly, as shown above, if the baroclinic component is also driven by the noise ($\widetilde{\sigma}\neq 0$), then we need to deal with the limit of the stochastic integration 
\[M_{\alpha}(t) := \int_0^te^{\aa Jr}\widetilde{\sigma}dW(r).\] 
As in \cite{flandoli2012stochastic}, we will show that this Gaussian martingale will converge in law to a Wiener process in $H$ with covariance 
    \begin{align}\label{def:Q_avg}
        \widetilde{Q} = \lim_{T\to\infty}\frac{1}{T}\int_0^{T}e^{J\tau}(I-\bP)\sigma^2(I-\bP)e^{-J\tau} d\tau. 
    \end{align}
Therefore the limit resonant system in this case is 
\begin{align}
    \begin{split}
        &\partial_t\bV+\Pc_h(\bV\cdot\nabla_h  \bV) = \nu\DD_h \bV + \Pc_h\overline{\sigma} \partial_t\widetilde{W},\\
        &\partial_t\tV +\bV\cdot\nabla_h \tV+\frac12\tV^{\perp}\left(\nabla_h ^{\perp}\cdot\bV\right)=\nu\Delta \tV+ \sqrt{\widetilde{Q}}\partial_t\widetilde{W}, 
    \end{split}
\end{align}
where $\widetilde{W}$ is a cylindrical Wiener process in $H$ defined on a new probability space $(\widetilde{\Om},\widetilde{\Fc},\widetilde{\Fc}_t,\widetilde{\Pb})$.

Summarizing the above discussions, we have the following system of $(\bv,\tu)$ that is equivalent to the original system \eqref{e.080605} of $(\bv,\tv)$ through $ (\bv, \tu) = (\bv, e^{\aa Jt}\tv)$:
\begin{align}\label{e.080901}
    \begin{split}
        \partial_t\bv &= -\Pc_h \Big(\bv\cdot \nabla_h  \bv + e^{-2\aa Jt}\bP\left(\tu\cdot \nabla_h  \tu - \tu^{\perp}\cdot \nabla_h \tu^{\perp}\right) \Big) = \nu\DD_h \bv + \Pc_h\overline{\sigma}\partial_tW,\\
        \partial_t\tu 
        &= -\Bigg(\bv \cdot \nabla_h  \tu+ \frac12\tu^{\perp}\left(\nabla_h ^{\perp}\cdot \bv\right) + \frac12e^{2\alpha Jt}\left(\tu\cdot \nabla_h  \bv -\tu\cdot \nabla_h ^{\perp}\bv^{\perp} \right) \\
        &\quad +\frac12e^{-\aa Jt}(I-2{\bP})\left(\tu\cdot \nabla_h  \tu -\tu^{\perp}\cdot \nabla_h \tu^{\perp}\right) + \frac12 e^{\aa Jt}\left(\tu\cdot \nabla_h  \tu -\tu\cdot \nabla_h ^{\perp}\tu^{\perp}+\bP\nabla_h |\tu|^2\right)  \\
        &\quad +w(e^{-\aa Jt}\tu)\partial_{z}\tu\Bigg)+\nu\Delta\tu + e^{\aa Jt}\widetilde{\sigma}\partial_tW.
    \end{split}
\end{align}
The limit resonant system is 
\begin{align}\label{e.080602}
    \begin{split}
        &\partial_t\bV+\Pc_h(\bV\cdot\nabla_h  \bV) = \nu\DD_h \bV + \Pc_h\overline{\sigma} \partial_t\widetilde{W},\\
        &\partial_t\tV +\bV\cdot\nabla_h \tV+\frac12\tV^{\perp}\left(\nabla_h ^{\perp}\cdot\bV\right)=\nu\Delta \tV+ \sqrt{\widetilde{Q}}\partial_t\widetilde{W}, 
    \end{split}
\end{align}
where $\widetilde{W}$ is a cylindrical Wiener process in $H$ defined on a new probability space $(\widetilde{\Om},\widetilde{\Fc},\widetilde{\Fc}_t,\widetilde{\Pb})$, and the covariance, $\widetilde{Q}$, is given by \eqref{def:Q_avg}. 

\begin{remark}
    {To see the operator $\widetilde{Q}$ more clearly, let us expand the functions and operators with respect to the complete orthonormal system given by 
    \begin{align*}
        \{\phi_{\gg}e^{\mi k\cdot x}\}_{k\in2\pi\Zb^3\setminus\{(0,0,0)\},\gg=1,2},
    \end{align*}
    where $\phi_{\gg}$ is the (normalized) eigenvector of $J$ corresponding to the eigenvalue $(-1)^{\gg} \mi$. Assume that under the basis $\{\phi_{\gg}\}$, the matrix representation of $\ss$ is 
    \begin{align*}
        \ss = \begin{bmatrix}
            \ss_{11} & \ss_{12} \\
            \ss_{21} & \ss_{22}
        \end{bmatrix},
    \end{align*}
    which is equivalent to say that 
    \begin{align*}
        &\ss\phi_{\gg} = \ss_{\gg1}\phi_1+\ss_{\gg2}\phi_2, 
    \end{align*} 
    Since $\ss$ is self-adjoint, we have 
    \begin{align*}
        \ss^2 = \ss\ss^{*} = \begin{bmatrix}
            \ss_{11}\ss_{11}^*+\ss_{12}\ss_{12}^* & \ss_{11}\ss_{21}^*+\ss_{12}\ss_{22}^* \\
            \ss_{21}\ss_{11}^*+\ss_{22}\ss_{12}^* & \ss_{21}\ss_{21}^*+\ss_{22}\ss_{22}^*
        \end{bmatrix}.
    \end{align*}
    Denote the square root of the diagonal part of $\ss^2$ as 
    \begin{align*}
        \Upsilon=\begin{bmatrix}
            \ss_{11}\ss_{11}^*+\ss_{12}\ss_{12}^* & 0 \\
            0 & \ss_{21}\ss_{21}^*+\ss_{22}\ss_{22}^*
        \end{bmatrix}^{\frac12}.
    \end{align*}

    Then a straightforward calculation gives $\widetilde{Q} = (I-\bP)\Upsilon^2(I-\bP)$. So the range of $\sqrt{\widetilde{Q} }$ then behaves like the range of $\Upsilon$ up to the projection $I-\bP$.}
\end{remark}

\section{Convergence to the Limit Resonant System on Finite-Time Horizons}\label{s.121503}
In this section, we prove the convergence \textit{in law} of the solution of \eqref{e.080901} to that of \eqref{e.080602}, {recalling that \eqref{e.080901} is equivalent to \eqref{e.080605}, on arbitrary, but finite-length time intervals $[0,T]$. This will proceed in two steps. First, we prove the convergence \textit{in probability} of the solution $v^{\aa}$ to the solution $V^{\aa}$ of an auxiliary equation, which is the limit resonant system modified to have the same noise process as the perturbed system \eqref{e.080901}:
\begin{align}\label{e.080803}
    \begin{split}
        &\partial_t\bV^\alpha+\Pc_h(\bV^\alpha\cdot\nabla_h  \bV^\alpha) = \nu\DD_h \bV^\alpha + \Pc_h\overline{\sigma} \partial_tW,\\
        &\partial_t\tV^\alpha +\bV^\alpha\cdot\nabla_h \tV^\alpha+\frac12(\tV^{\alpha})^{\perp}\left(\nabla_h ^{\perp}\cdot\bV^\alpha\right)=\nu\Delta \tV^\alpha+ e^{\aa Jt}\widetilde{\sigma}\partial_tW. 
    \end{split}
\end{align}
Second, we show that $V^{\aa}$ converges in law to $V$, the solution to \eqref{e.080602}, where both systems have the same drift terms, but correspond to different noise processes.
In particular, the main result of this section is obtained as a corollary of these two intermediate convergence results.

{More precisely, we first prove:}

\begin{theorem}\label{thm:convergence_A}
    Suppose $v^{\alpha} = (\bv,\tu)$ is the solution to \eqref{e.080901} with deterministic initial data $v_0\in H^2$ and that $V^{\aa} = (\bV^{\aa}, \tV^{\aa})$ is the solution to \eqref{e.080803} corresponding to the same initial data. Then for any $T>0$ and $\dd>0$, there is $\alpha_0=\alpha_0(T,\dd)>0$ such that for all $\alpha>\alpha_0$, 
    \begin{align*}
        \Pb\left(\sup_{0\leq t\leq T}\|v^{\alpha}(t)-V^{\aa}(t)\|_{H^1}\leq \dd\right)\geq 1-\dd. 
    \end{align*}
\end{theorem}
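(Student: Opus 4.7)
The plan is a pathwise argument exploiting that $v^\aa$ and $V^\aa$ are driven by \emph{identical} noise: the barotropic components both carry $\Pc_h\overline{\sigma}\,\partial_tW$ and the baroclinic ones both carry $e^{\aa Jt}\widetilde\sigma\,\partial_tW$. Hence $\phi^\aa := v^\aa - V^\aa$ satisfies a random-coefficient PDE with \emph{no} It\^o integral and zero initial data. Subtracting \eqref{e.080803} from \eqref{e.080901} and splitting the nonlinear differences bilinearly (e.g.\ $\bv\cdot\nabla_h\bv-\bV^\aa\cdot\nabla_h\bV^\aa = \bv\cdot\nabla_h(\bv-\bV^\aa)+(\bv-\bV^\aa)\cdot\nabla_h\bV^\aa$) yields, schematically,
\begin{align*}
\partial_t\phi^\aa = \nu\Delta\phi^\aa + \mathcal{L}(v^\aa,V^\aa)\phi^\aa + R^\aa(t,v^\aa),
\end{align*}
where $\mathcal{L}(v^\aa,V^\aa)$ is a first-order linear operator with coefficients controlled by $\|v^\aa\|_{H^2}+\|V^\aa\|_{H^2}$, and $R^\aa(t,v^\aa)$ gathers the \emph{purely oscillatory} residuals that appear only in \eqref{e.080901}---the $\bP$-projected quadratic term in the $\bv$-equation and the four oscillatory groups in the $\tu$-equation---each carrying a factor $e^{\pm\aa Jt}$ or $e^{\pm 2\aa Jt}$.

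To localize the nonlinearity, I fix $M>0$ and introduce
\begin{align*}
\tau_M:=\inf\Big\{ t\ge 0 : \|v^\aa(t)\|_{H^2}^2 + \|V^\aa(t)\|_{H^2}^2 + \int_0^t\bigl(\|v^\aa\|_{H^3}^2+\|V^\aa\|_{H^3}^2\bigr)ds > M\Big\}\wedge T.
\end{align*}
An $H^1$ energy estimate for $\phi^\aa$, using product, commutator, and anisotropic Sobolev estimates for the PE nonlinearities, absorbs the $\langle\mathcal{L}(v^\aa,V^\aa)\phi^\aa,\phi^\aa\rangle_{H^1}$ contribution as $C(M)\|\phi^\aa\|_{H^1}^2+\tfrac{\nu}{2}\|\phi^\aa\|_{H^2}^2$, leaving, for $t\in[0,\tau_M]$,
\begin{align*}
\|\phi^\aa(t)\|_{H^1}^2 \le C(M)\int_0^t\|\phi^\aa\|_{H^1}^2\,ds + 2\Big|\int_0^t\langle R^\aa(s,v^\aa),\phi^\aa(s)\rangle_{H^1}\,ds\Big|.
\end{align*}

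The crux is to show that the residual integral is $O(\aa^{-1})$ in $L^2(\Omega)$ on $[0,\tau_M]$. Decomposing $J$ in its complex eigenbasis (as in the remark closing \cref{s.121502}), each summand of $R^\aa$ has the form $e^{\pm i\aa s}F(v^\aa(s))$ with $F$ quadratic. Writing $e^{\pm i\aa s}=(\pm i\aa)^{-1}\tfrac{d}{ds}e^{\pm i\aa s}$ and applying a stochastic integration by parts (It\^o's formula to the product $e^{\pm i\aa s}\langle F(v^\aa),\phi^\aa\rangle_{H^1}$) generates: a boundary term of size $\aa^{-1}C(M)$; a drift interior term that, after substituting the evolutions \eqref{e.080901}--\eqref{e.080803} for $\partial_s v^\aa$ and $\partial_s\phi^\aa$, is trilinear in $v^\aa,V^\aa,\phi^\aa$ and bounded by $\aa^{-1}C(M)$ on $[0,\tau_M]$; and a martingale interior term whose $L^2(\Omega)$-norm, via the It\^o isometry and Burkholder--Davis--Gundy, gains the factor $\aa^{-1}$ from a Hilbert--Schmidt norm of $\sigma$. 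Gronwall's inequality then delivers
\begin{align*}
\Eb\Big[\sup_{t\in[0,T\wedge\tau_M]}\|\phi^\aa(t)\|_{H^1}^2\Big]\le \frac{C(M,T,\sigma)}{\aa}.
\end{align*}

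To conclude, I absorb $\{\tau_M<T\}$ using the triple-logarithmic $H^2$ moment bounds for $v^\aa$ recorded in \cref{l.080902}, together with the analogous (actually stronger) bounds for $V^\aa$ whose baroclinic mode is linear in $\tV^\aa$: given $\dd>0$, first choose $M=M(T,\dd)$ so that $\Pb(\tau_M<T)\le\dd/2$, then choose $\alpha_0$ large so that Chebyshev applied to the previous display yields $\Pb(\sup_{[0,T]}\|\phi^\aa\|_{H^1}>\dd,\,\tau_M=T)\le\dd/2$. \textbf{The main obstacle} is the interplay between (i) carrying out the time integration by parts rigorously as an It\^o integration by parts, so that the martingale contribution genuinely decays like $\aa^{-1}$ in expectation (not merely pathwise), and (ii) closing the argument using \emph{only} the weak (logarithmic) uniform-in-$\aa$ moment bounds on $\|v^\aa\|_{H^2}$ afforded by \cref{l.080902}---this is precisely what forces the convergence to be obtained in probability on \emph{finite} time intervals rather than uniformly in $T$.
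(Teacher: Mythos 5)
Your proposal is correct in outline and broadly parallels the paper's proof in its high-level structure (same-noise cancellation, zero initial data, $H^1$ energy estimate, localization to handle the weak $H^2$ tails, two-parameter selection $M\!\to\!\alpha_0(M)$ versus the paper's $N\!\to\!\alpha_0(N)$), but it takes a genuinely different route on the central point: how the highly oscillatory residuals are made small. The paper first band-limits via $\pi_N$, pushing the high-frequency tail into an $O(N^{-\varepsilon})$ error (which already costs one pass through the $H^2$ control of the solution), and then expands the surviving $\pi_N$ piece in a finite Fourier basis and invokes the abstract oscillatory-integral lemma (\cref{l.081101}, from Flandoli--Mahalov) whose input is only $L^p$-in-time H\"older continuity of the Fourier coefficients (supplied by \cref{l.080901}); this gives a rate $\alpha^{-\beta}$ with $\beta<1$ and an $N$-dependent constant. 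You instead go directly to a stochastic integration by parts on $\int_0^t e^{\pm i\alpha s}\langle F(v^\alpha),\phi^\alpha\rangle_{H^1}\,ds$ via It\^o's formula, which avoids the frequency cutoff and, if carried out, yields the cleaner rate $\alpha^{-1}$ (the paper's route is ``softer'' in that it needs only H\"older-in-time moments, not the full SDE structure; yours is ``harder'' but sharper). Your localization by the stopping time $\tau_M$ is functionally equivalent to the paper's localization to the good events $\Omega_1,\dots,\Omega_4$.

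Two caveats you should address to make the It\^o route airtight. First, the $H^1$ pairing $\langle R^\alpha,\phi^\alpha\rangle_{H^1}$ must be reshuffled to $\langle F,\Lambda^2\phi^\alpha\rangle_{L^2}$ before integrating by parts in time, because for the worst residual $F\sim w(\tu)\partial_z\tu$ the baroclinic loss of a horizontal derivative means $F\notin H^1$ from $\tu\in H^2$ alone; one then uses $\sup_{t\le\tau_M}\|\phi^\alpha(t)\|_{H^2}\le 2\sqrt{M}$, which is exactly why $\tau_M$ must cap $\sup_t\|v^\alpha\|_{H^2}^2$ and $\sup_t\|V^\alpha\|_{H^2}^2$, not just their time integrals. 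Second, after substituting the evolutions, the interior drift carries $DF(v^\alpha)[\nu\Delta v^\alpha]$ and $\langle F,\nu\Delta\phi^\alpha\rangle_{H^1}$, both requiring $\|\cdot\|_{H^3}$ in $L^2_t$; the $\int_0^{\cdot}\|\cdot\|_{H^3}^2$ cap in your $\tau_M$ handles this, but it is a nontrivial bookkeeping step that deserves to be spelled out (the $\alpha^{-1}$ factor comes from the explicit $(\pm i\alpha)^{-1}$ extracted from $\tfrac{d}{ds}e^{\pm i\alpha s}$, not from the Hilbert--Schmidt norm of $\sigma$ as your phrasing suggests). Finally, the Gronwall application should be run against $\sup_{s\le t}$ of the inhomogeneity, since the boundary and martingale contributions are not monotone in $t$; this is cosmetic.
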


\begin{remark}
    If $\widetilde{\sigma}=0$, then $V^{\aa}$ is independent of $\aa$ and \cref{thm:convergence_A} implies the convergence in probability of solutions of \eqref{e.080901} to those of \eqref{e.080601}.
\end{remark}

In particular, \cref{thm:convergence_A} implies that $v^{\aa}-V^{\aa}$ converges in law to $0$. Secondly, we prove:

\begin{theorem}\label{thm:convergence_B}
    Suppose $v_0\in H^2$ and $T>0$. Let $\Lc^{\infty,\aa},\Lc^{\infty}$ denote the laws on $C([0,T];H^1)$ of the solutions $V^{\aa}=(\overline{V}^{\aa},\widetilde{V}^{\aa})$ to \eqref{e.080803} and $V=(\overline{V},\widetilde{V})$ to \eqref{e.080602} respectively with  the same initial data $v_0$. Then 
    \begin{align*}
        \Lc^{\infty,\aa}\to \Lc^{\infty}
    \end{align*} 
weakly as $\aa\to\infty$. 
\end{theorem}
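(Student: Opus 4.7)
The plan is to exploit the fact that \eqref{e.080803} and \eqref{e.080602} share identical drift structures and differ only in their driving noise. Since $\Pc_h\overline{\ss}\partial_tW$ does not depend on $\aa$, the barotropic component $\bV^{\aa}$ has a law independent of $\aa$ that agrees with the law of $\bV$. Moreover, the baroclinic equation is linear in $\tV^{\aa}$ once $\bV^{\aa}$ is fixed, and the only $\aa$-dependence enters through the oscillating stochastic integral
\begin{align*}
M_{\aa}(t) := \int_0^t e^{\aa Jr}\widetilde{\ss}\,dW(r).
\end{align*}
Thus the theorem reduces to establishing joint convergence in law of the driving pair $(\bP W,M_{\aa})$, followed by a continuous-dependence argument for the solutions.

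First, I would prove $M_{\aa}\Rightarrow \sqrt{\widetilde{Q}}\,\widetilde{W}_{\perp}$ in $C([0,T];H^{2})$, where $\widetilde{W}_{\perp}$ is a cylindrical Wiener process on $(I-\bP)H$ independent of $\bP W$. A direct computation gives the quadratic variation
\begin{align*}
\langle M_{\aa}\rangle_{t} = \int_0^t e^{\aa Jr}(I-\bP)\ss^{2}(I-\bP)e^{-\aa Jr}\,dr = \frac{1}{\aa}\int_0^{\aa t} e^{J\tau}(I-\bP)\ss^{2}(I-\bP)e^{-J\tau}\,d\tau,
\end{align*}
which converges to $t\widetilde{Q}$ as $\aa\to\infty$, uniformly in $t\in[0,T]$, in the trace-class norm on $H^{2}$, using $\ss\in L_{2}(H,H^{2})$ and the Ces\`aro convergence defining $\widetilde{Q}$ in \eqref{def:Q_avg}. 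A Hilbert-space-valued martingale central limit theorem then yields the claimed weak convergence. Independence from $\bP W$ is immediate from the assumed commutation $\bP\ss=\ss\bP$, which shows $M_{\aa}$ is measurable with respect to $(I-\bP)W$.

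Second, I would establish tightness of $\{\Lc^{\infty,\aa}\}_{\aa}$ on $C([0,T];H^{1})$. The family $\{\bV^{\aa}\}$ has a fixed law, so it is trivially tight, with uniform moments in $L^{\infty}(0,T;H^{2})\cap L^{2}(0,T;H^{3})$ from 2D stochastic Navier--Stokes theory. For $\tV^{\aa}$, I would split $\tV^{\aa}=Z^{\aa}+R^{\aa}$, where $Z^{\aa}(t)=\int_0^t e^{\nu\Delta(t-s)}\,dM_{\aa}(s)$ is the stochastic convolution and $R^{\aa}$ satisfies a pathwise linear PDE with random coefficients supplied by $\bV^{\aa}$. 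Uniform $H^{2}$-moments of $Z^{\aa}$ follow from $\widetilde{\ss}\in L_{2}(H,H^{2})$ together with the smoothing of the heat semigroup, while energy estimates on $R^{\aa}$ using the $\aa$-independent bounds on $\bV^{\aa}$ yield bounds in $L^{\infty}(0,T;H^{1})\cap L^{2}(0,T;H^{2})$. Equicontinuity in time follows from Kolmogorov's criterion applied to $Z^{\aa}$ and standard bounds on $\partial_t R^{\aa}$, so Aubin--Lions compactness delivers tightness in $C([0,T];H^{1})$.

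Finally, I would identify the limit. By Prokhorov and Skorokhod, pass to a subsequence and realize on a new probability space so that $(\bV^{\aa_k},\tV^{\aa_k},\bP W,M_{\aa_k})$ converges almost surely in the relevant topologies to a limit $(\bV^{*},\tV^{*},W^{*},M^{*})$, with $(W^{*},M^{*})$ having the joint law of $(\bP W,\sqrt{\widetilde{Q}}\,\widetilde{W}_{\perp})$ for an independent Wiener pair. Passing to the limit in the mild formulation of \eqref{e.080803}, one identifies $(\bV^{*},\tV^{*})$ as a solution to \eqref{e.080602} driven by a single cylindrical Wiener process $\widetilde{W}$ reconstructed from the independent pair $(W^{*},\widetilde{W}_{\perp})$. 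The bilinear terms $\bV\cdot\nabla_h\tV$ and $\tV^{\perp}(\nabla_h^{\perp}\cdot\bV)$ pass to the limit by strong $C([0,T];H^{1})$ convergence of both components, and semigroup convolutions against $M_{\aa_k}$ converge to those against $M^{*}$ by weak convergence of martingales paired with deterministic kernels. Pathwise uniqueness for \eqref{e.080602}---available since the baroclinic equation is linear once $\bV$ is known---then identifies the limit uniquely, and a standard subsequence argument gives $\Lc^{\infty,\aa}\to\Lc^{\infty}$ weakly. \textbf{The main obstacle} is arranging the Step 3 joint convergence so that the limit noise $M^{*}$ is genuinely independent of $\bV^{*}$, thereby matching the structure of \eqref{e.080602}; this is resolved by the orthogonal decomposition $W=\bP W+(I-\bP)W$ and the fact that $M_{\aa}$ depends only on the second summand, so independence propagates through the Skorokhod realization.
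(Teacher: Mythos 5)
Your proposal is correct and follows the same broad architecture as the paper (convergence of the oscillating stochastic integrals, tightness, Skorokhod representation, uniqueness of the limit), but it routes through a few genuinely different ingredients. The paper works with the single combined martingale $M_\alpha(t) = \int_0^t (\Pc_h\bP\sigma + e^{\alpha Jr}(I-\bP)\sigma)\,dW(r)$, computes its full covariance in \cref{lem:covariance} (where the commutation $\bP\sigma = \sigma\bP$ enters to kill the cross terms), invokes \cref{lem:Cesaro} to identify the limiting covariance $\Pc_h\bP\sigma^2\bP\Pc_h + \widetilde{Q}$, and then uses tightness plus the fact that a Gaussian process is determined by its covariance to conclude. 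You instead split the noise as $W = \bP W + (I-\bP)W$ from the outset, observe that the barotropic component has $\alpha$-independent law, isolate the baroclinic Wiener integral, and appeal to a Hilbert-space martingale CLT for the latter; the independence of the limit baroclinic noise from $\bP W$ is then carried explicitly through the Skorokhod realization, whereas the paper encodes the same fact implicitly in the block-diagonal covariance $G$. For tightness you propose a stochastic-convolution split $\tV^\alpha = Z^\alpha + R^\alpha$ together with Aubin--Lions, which gives tightness directly in $C([0,T];H^1)$; the paper instead establishes uniform bounds in $W^{\gamma,p}(0,T;H^1)$ for the solution process itself, deduces tightness in $C([0,T];H)$, and then upgrades the convergence to $C([0,T];H^1)$ at the end of Step 3 using the moment bounds from \cref{l.092501} together with the argument cited from \cite{flandoli2012stochastic}. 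Both routes buy the same theorem; the paper's is a bit leaner because a Gaussian martingale does not really need a full CLT, while your decomposition and explicit independence bookkeeping make the identification of the limit more transparent. One minor overstatement: your tightness argument for $M_\alpha$ via $\sigma\in L_2(H,H^2)$ and Kolmogorov/fractional-Sobolev criteria gives tightness in $C([0,T];H^s)$ only for $s<2$ (the embedding of $W^{\gamma,p}(0,T;H^2)$ into $C([0,T];H^2)$ is not compact in infinite dimensions), so the claimed convergence ``in $C([0,T];H^2)$'' should be weakened to $C([0,T];H^s)$ for $s<2$, which is all the subsequent steps require.
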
 

{\cref{thm:convergence_A} and \cref{thm:convergence_B}, together, then immediately imply the main result of this section:}

\begin{theorem}\label{thm:convergence_C}
 Suppose $v_0\in H^2$ and $T>0$. Let $\Lc^\aa, \Lc^\infty$ denote the laws on $C([0,T];H^1)$ of the solutions $v^{\alpha}=(\bv,\tu)$ to \eqref{e.080901} and $V=(\overline{V},\widetilde{V})$ to \eqref{e.080602} corresponding to initial data $v_0$. Then 
    \[
        \Lc^\aa\to\Lc^{\infty},
    \]
weakly as $\aa\to\infty$.
\end{theorem}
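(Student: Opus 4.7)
The plan is to deduce \cref{thm:convergence_C} directly from \cref{thm:convergence_A} and \cref{thm:convergence_B} by a Slutsky-type argument in the Polish space $C([0,T]; H^1)$. Writing $v^{\aa} = V^{\aa} + (v^{\aa} - V^{\aa})$, the first summand converges in law to $V$ by \cref{thm:convergence_B}, while the second converges to zero in probability (in the topology of $C([0,T]; H^1)$) by \cref{thm:convergence_A}. Crucially, $v^{\aa}$ and $V^{\aa}$ are constructed on the same probability space $(\Om, \Fc, \Pb)$, driven by the same cylindrical Wiener process $W$ and issued from the same deterministic initial datum $v_0 \in H^2$, which is what makes the pathwise comparison in \cref{thm:convergence_A} meaningful.

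To make this precise, we would invoke the characterization of weak convergence via bounded Lipschitz test functions, which applies since $C([0,T]; H^1)$ is a separable metric space. Let $F: C([0,T]; H^1) \to \Rb$ be bounded and Lipschitz with $\|F\|_\infty \leq M$ and Lipschitz constant $L$. By the triangle inequality,
\begin{align*}
    \left| \Eb F(v^{\aa}) - \widetilde{\Eb} F(V) \right|
    \leq \left| \Eb F(v^{\aa}) - \Eb F(V^{\aa}) \right|
    + \left| \Eb F(V^{\aa}) - \widetilde{\Eb} F(V) \right|,
\end{align*}
where $\widetilde{\Eb}$ denotes expectation with respect to $\widetilde{\Pb}$. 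The second term on the right-hand side tends to zero as $\aa \to \infty$ by \cref{thm:convergence_B}. For the first term, given any $\dd > 0$, splitting according to the event $\{\|v^{\aa} - V^{\aa}\|_{C([0,T];H^1)} \leq \dd\}$ and its complement, and using $|F(v^\aa) - F(V^\aa)| \leq L\dd$ on the former and $|F(v^\aa) - F(V^\aa)| \leq 2M$ on the latter, yields
\begin{align*}
    \left| \Eb F(v^{\aa}) - \Eb F(V^{\aa}) \right|
    \leq L\dd + 2M\, \Pb\left(\sup_{0\leq t\leq T}\|v^{\aa}(t) - V^{\aa}(t)\|_{H^1} > \dd\right).
\end{align*}
By \cref{thm:convergence_A}, the probability on the right can be made arbitrarily small by taking $\aa$ sufficiently large. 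Passing to $\limsup_{\aa\to\infty}$ leaves a bound of $L\dd$, and finally letting $\dd\to 0$ yields $\Eb F(v^{\aa}) \to \widetilde{\Eb} F(V)$ for every bounded Lipschitz $F$, which establishes the claimed weak convergence.

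No significant obstacle is expected in this step: all of the nontrivial analysis, including the uniform-in-$\aa$ moment bounds, the resonance cancellations in both the nonlinear and stochastic terms, and the identification of the averaged covariance $\widetilde{Q}$, is already carried out in \cref{thm:convergence_A} and \cref{thm:convergence_B}. One could equivalently phrase the above as an abstract lemma: in any separable metric space $(X, d)$, if $X_n \to X$ in distribution and $d(X_n, Y_n) \to 0$ in probability, then $Y_n \to X$ in distribution, applied with $X_n = V^{\aa_n}$, $Y_n = v^{\aa_n}$, and $X = V$ along any sequence $\aa_n \to \infty$.
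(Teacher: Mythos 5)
Your argument is correct and coincides with the paper's, which simply asserts that \cref{thm:convergence_C} follows immediately from \cref{thm:convergence_A} and \cref{thm:convergence_B}; you have merely written out the standard Slutsky-type deduction (convergence in distribution plus convergence in probability of the difference, same probability space) that the paper leaves implicit.
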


{It therefore suffices to prove \cref{thm:convergence_A} and \cref{thm:convergence_B} to establish \cref{thm:convergence_C}.}

\subsection{Proof of Theorem \ref{thm:convergence_A}}
In the proof we will use the following complete orthonormal basis  of $H$ given by 
    \begin{align}\label{e.080902}
        \{\phi_{\gg}e^{ik\cdotp x}\}_{k\in 2\pi\Zb^3\setminus\{(0,0,0)\},\gg=\pm1},
    \end{align}
    where $\phi_{\gg}$ is the eigenvector of $J$ corresponding to the eigenvalue $\gg \mi$. 
    We also denote $\pi_N$ as the projection from $H$ onto the subspace spanned by the basis with magnitude of wave-numbers 
    not exceeding $N$.  For notational convenience, we omit the dependence of $v^{\aa}, V^{\aa}$ on $\aa$ throughout the proof.
\begin{proof}
     Set $\overline{R} =\bv-\overline{V}$ and $\widetilde{R} = \tu-\widetilde{V}$. From equations  \eqref{e.080901} and \eqref{e.080803} we have 
    \begin{align}\label{e.080801}
        \begin{split}
            \partial_t\overline{R} + \Pc_h\left(\overline{R}\cdot\nabla_h  \bv+\overline{V}\cdot\nabla_h  \overline{R}\right)+\overline{I}_{\alpha}(\tu,\tu)=\nu\Delta_h \overline{R}, 
        \end{split}
    \end{align}
    and 
    \begin{align}\label{e.080802}
        \begin{split}
            \partial_t\widetilde{R}+\overline{R}\cdot\nabla_h \tu + \overline{V}\cdot\nabla_h  \widetilde{R} + \frac12\widetilde{R}^{\perp}(\nabla_h ^{\perp}\cdot \bv)+\frac12\widetilde{V}^{\perp}(\nabla_h ^{\perp}\cdot \overline{R})+\widetilde{I}_{\alpha}(\bv,\tu) = \nu\Delta \widetilde{R},
        \end{split}
    \end{align}
    with initial data $(\overline{R}(0),\widetilde{R}(0))=(0,0)$.  Here $\overline{I}_{\alpha}(\tu,\tu)$ and $\widetilde{I}_{\alpha}(\bv,\tu)$ denote the highly oscillatory terms through:
    \begin{align}\label{def:Ialphabar}
        \overline{I}_{\alpha}(\widetilde{f},\widetilde{g}) = \Pc_h \Big(e^{-2\aa Jt}\bP\left(\widetilde{f}\cdot \nabla_h  \widetilde{g} - \widetilde{f}^{\perp}\cdot \nabla_h \widetilde{g}^{\perp}\right) \Big),
    \end{align}
    and  
    \begin{align}\label{e.081103}
        \begin{split}
            \widetilde{I}_{\alpha}(\bv,\tu) &= \frac12e^{2\alpha Jt}\left(\tu\cdot \nabla_h  \bv -\tu\cdot \nabla_h ^{\perp}\bv^{\perp} \right)+\frac12e^{-\aa Jt}(I-2{\bP})\left(\tu\cdot \nabla_h  \tu -\tu^{\perp}\cdot \nabla_h \tu^{\perp}\right)\\
            & + \frac12 e^{\aa Jt}\left(\tu\cdot \nabla_h  \tu -\tu\cdot \nabla_h ^{\perp}\tu^{\perp}+\bP\nabla_h |\tu|^2\right) +w(e^{-\aa Jt}\tu)\partial_{z}\tu = \sum_{i=1}^4\widetilde{I}_{\alpha,i}.
        \end{split}
    \end{align}

    From \eqref{e.080801}, we infer
    \begin{align*}
        \frac12\frac{d}{dt}\|\overline{R}\|_1^2+\nu \|\overline{R}\|_2^2=-\langle \LL \Pc_h\left(\overline{R}\cdot\nabla_h  \bv+\overline{V}\cdot\nabla_h  \overline{R}\right),\LL \overline{R}\rangle - \langle\LL\overline{I}_{\alpha}(\tu,\tu), \LL\overline{R} \rangle.
    \end{align*}
    An application of the Cauchy-Schwarz inequality, interpolation, and Young's inequality implies, for any $\ee>0$
    \begin{align*}
        |\langle \overline{R}\cdot\nabla_h  \bv,\LL^2 \overline{R}\rangle|\leq C\|\overline{R}\|_1\|\bv\|_{1+\ee}\|\overline{R}\|_{2}\leq \frac{C}{\nu}\|\overline{R}\|_1^2\|\bv\|_{1+\ee}^2 + \frac{\nu}{4}\|\overline{R}\|_2^2,  
    \end{align*}
    and  
    \begin{align*}
        |\langle \overline{V}\cdot\nabla_h  \overline{R},\LL^2 \overline{R}\rangle|\leq C\|\overline{V}\|_{1+\ee}\|\overline{R}\|_1\|\overline{R}\|_{2}\leq \frac{C}{\nu}\|\overline{V}\|_{1+\ee}^2\|\overline{R}\|_1^2 + \frac{\nu}{4}\|\overline{R}\|_2^2.
    \end{align*}
    Upon combining these inequalities, we obtain
    \begin{align*}
        |\langle \LL \Pc_h\left(\overline{R}\cdot\nabla_h  \bv+\overline{V}\cdot\nabla_h  \overline{R}\right),\LL \overline{R}\rangle|\leq \frac{C}{\nu}\left(\|\bv\|_{1+\ee}^2+\|\overline{V}\|_{1+\ee}^2\right)\|\overline{R}\|_1^2+\frac{\nu}{2}\|\overline{R}\|_2^2. 
    \end{align*}
    Therefore, since $\overline{R}(0)=0$, we have 
    \begin{align}\label{e.081101}
        \frac12\|\overline{R}(t)\|_1^2+\frac{\nu}{2}  \int_0^t\|\overline{R}(r)\|_2^2dr\leq \frac{C}{\nu}\int_0^t\left(\|\bv\|_{1+\ee}^2+\|\overline{V}\|_{1+\ee}^2\right)\|\overline{R}\|_1^2dr -\int_0^t \langle\LL\overline{I}_{\alpha}(\tu,\tu), \LL\overline{R} \rangle dr. 
    \end{align}

    We now consider the term involving $\overline{I}_{\alpha}(\tu,\tu)$ by taking the approach as in \cite{flandoli2012stochastic}. Then we have the decomposition 
    \begin{align*}
        \langle\overline{I}_{\alpha}(\tu,\tu), \LL^2\overline{R} \rangle &= \langle\LL\overline{I}_{\alpha}(\tu,\tu), \LL\overline{R} \rangle\\
        & = \langle(1-\pi_N)\LL\overline{I}_{\alpha}(\tu,\tu), \LL\overline{R} \rangle + \langle\pi_N\LL\overline{I}_{\alpha}(\tu,(1-\pi_N)\tu), \LL\overline{R} \rangle \\
        &\quad + \langle\pi_N\LL\overline{I}_{\alpha}((1-\pi_N)\tu,\tu), \LL\overline{R} \rangle + \langle\pi_N\LL\overline{I}_{\alpha}(\pi_N\tu,\pi_N\tu), \LL\overline{R} \rangle\\
        &\quad - \langle\pi_N\LL\overline{I}_{\alpha}((1-\pi_N)\tu,(1-\pi_N)\tu), \LL\overline{R} \rangle=\sum_{i=1}^5\overline{I}_{\alpha,i}.
    \end{align*}
   Observe that by the Poincar\'e inequality $\|(1-\pi_N)v\|_{2-\ee}\leq CN^{-\varepsilon}\|v\|_2$. 
   It follows that
   \begin{align*}
    \overline{I}_{\aa,1}\leq C\|\LL^{\ee}\bP\left(\tu\cdot\nabla_h  \tu\right)\|\|(1-\pi_N)\overline{R}\|_{2-\ee}
    \leq CN^{-\varepsilon}\|\LL^{\ee}\left(\tu\cdot\nabla_h  \tu\right)\|\|\overline{R}\|_2, 
   \end{align*}
   On the other hand, by \cref{l.080801} and the Sobolev embedding $H^{3/4}(\mathbb T^3)\subset L^4(\mathbb T^3)$, with small enough $\ee$ we have
   \begin{align*}
    \|\LL^{\ee}\left(\tu\cdot\nabla_h  \tu\right)\|\lesssim \|\LL^{\ee}\tu\|_{L^4}\|\LL^{\ee}\nabla_h  \tu\|_{L^4}\lesssim \|\tu\|_{2}^2. 
   \end{align*}
   Thus, by Young's inequality, we deduce
   \begin{align*}
    \overline{I}_{\aa,1}\leq CN^{-2\varepsilon}\|\tu\|_{2}^4+\frac{\nu}{8}\|\overline{R}\|_2^2.
   \end{align*}
   Next, by interpolation, the Poincar\'e inequality, and Young's inequality we have  
   \begin{align*}
    \overline{I}_{\aa,2}\leq C\|\tu\cdot\nabla_h  (1-\pi_N)\tu\|\|\overline{R}\|_2&\leq C\|\tu\|_{L^{\infty}}\|\nabla_h  (1-\pi_N)\tu\|\|\overline{R}\|_2
    \\
    &\leq CN^{-\varepsilon}\|\tu\|_2\|\tu\|_{1+\ee}\|\overline{R}\|_2\leq CN^{-2\varepsilon}\|\tu\|_2^2\|\tu\|_{1+\ee}^2 + \frac{\nu}{8}\|\overline{R}\|_2^2. 
   \end{align*}
   Similarly, 
   \begin{align*}
    \overline{I}_{\aa,3}\leq C\|((1-\pi_N)\tu)\cdot\nabla_h  \tu\|\|\overline{R}\|_2&\leq C\|(1-\pi_N)\tu\|_{L^{\infty}}\|\tu\|_1\|\overline{R}\|_2\\
    &\leq CN^{-\varepsilon}\|\tu\|_{2}\|\tu\|_1\|\overline{R}\|_2
    \leq CN^{-2\varepsilon}\|\tu\|_2^2\|\tu\|_{1}^2 + \frac{\nu}{8}\|\overline{R}\|_2^2, 
   \end{align*}
   and 
   \begin{align*}
    \overline{I}_{\aa,5}\leq  CN^{-2\varepsilon}\|\tu\|_2^2\|\tu\|_{1}^2 + \frac{\nu}{8}\|\overline{R}\|_2^2. 
   \end{align*}
   To estimate $\overline{I}_{\aa,4}$, we expand it with respect to the complete orthonormal basis  of $H$ given by \eqref{e.080902}. To that end, we let 
   \begin{align*}
    \tu = \sum_{k, k_3\neq 0, \gg}\tu_{k,\gg}\phi_{\gg}e^{ik\cdotp x},\quad v = \sum_{k,\gg}v_{k,\gg}\phi_{\gg}e^{ik\cdotp x}, \quad V = \sum_{k,\gg}V_{k,\gg}\phi_{\gg}e^{ik\cdotp x}.
   \end{align*}
   Observe then that
   \begin{align*}
    \tu^{\perp} =J\tu= \sum_{k, k_3\neq 0, \gg}\gg \mi \tu_{k,\gg}\phi_{\gg}e^{ik\cdotp x},\quad e^{2\aa Jt}\overline{R} = \sum_{k,k_3=0,\gg}e^{2\mi\aa\gg  t}(v_{k,\gg}-V_{k,\gg})\phi_{\gg}e^{ik\cdotp x}. 
   \end{align*}
   Therefore, with the notation $\ell=(\ell',\ell_3)$, for $(j,\ell,k)\in\{j+\ell=k, k_3=0, |j|,|k|,|\ell|\leq N\}$  and $\gg, \gg_1,\gg_2\in \{\pm1\}$, 
   \begin{align*}
    \overline{I}_{\aa,4} &= \langle \pi_N \Lambda\overline{I}_{\alpha}(\pi_N\tu,\pi_N\tu), \LL\overline{R} \rangle = \langle \pi_N\tu\cdot \nabla_h  \pi_N\tu - \pi_N\tu^{\perp}\cdot \nabla_h \pi_N\tu^{\perp}, {\bP}e^{2\aa Jt}\pi_N\LL^2\Pc_h\overline{R} \rangle \\
    & = \langle \pi_N\tu\cdot \nabla_h  \pi_N\tu - \pi_N\tu^{\perp}\cdot \nabla_h \pi_N\tu^{\perp}, e^{2\aa Jt}\pi_N\LL^2\overline{R} \rangle \\
    & = \sum_{\substack{j,\ell,k,\gg_1,\gg_2\\ k_3=0}}\left(\tu_{j,\gg_1}\phi_{\gg_1}\cdot(\mi \ell')\tu_{\ell,\gg_2}-\gg_1\mi\tu_{j,\gg_1}\phi_{\gg_1}\cdot(\mi \ell')\gg_2\mi\tu_{\ell,\gg_2}\right)e^{-2\mi\aa\gg_2 t}(-|k|^2)(v_{-k,\gg_2}-V_{-k,\gg_2})\\
    & = \sum_{\substack{j,\ell,k,\gg_1,\gg_2\\ k_3=0}}\phi_{\gg_1}\cdot(-\mi \ell'|k|^2)\left(1+\gg_1\gg_2\right)e^{-2\mi\aa\gg_2 t}\tu_{j,\gg_1}\tu_{\ell,\gg_2}(v_{-k,\gg_2}-V_{-k,\gg_2})\\
    &=\sum_{\substack{j,\ell,k\\ \gg_\in\{\pm1\}\\ k_3=0}}2\phi_{\gg}\cdot(-\mi \ell'|k|^2)e^{-2\mi\aa\gg t}\tu_{j,\gg}\tu_{\ell,\gg}(v_{-k,\gg}-V_{-k,\gg})\\
    &=\sum_{\substack{j,\ell,k\\ \gg_\in\{\pm1\}\\ k_3=0}}2\phi_{\gg}\cdot(-\mi \ell'|k|^2)e^{-2\mi\aa\gg t}f_{j,\ell,k,\gg},
   \end{align*}
   where 
   \begin{align*}
    f_{j,\ell,k,\gg}&=\tu_{j,\gg}\tu_{\ell,\gg}(v_{-k,\gg}-V_{-k,\gg})\\
    & = \langle \tu, \phi_{\gg}e^{\mi j\cdot x}\rangle \langle \tu, \phi_{\gg}e^{\mi \ell\cdot x}\rangle\left(\langle v, \phi_{\gg}e^{-\mi k\cdot x}\rangle-\langle V, \phi_{\gg}e^{-\mi k\cdot x}\rangle\right).
   \end{align*}
   It follows from Young's inequality and \cref{l.080901}  that 
   \begin{align*}
    \Eb\sup_{t\in[0,T]}|f_{j,\ell,k,\gg}|&\leq C\left(\Eb\sup_{t\in[0,T]}|\tu_{j,\gg}|^2+\Eb\sup_{t\in[0,T]}|\tu_{\ell,\gg}|^4+\Eb\sup_{t\in[0,T]}|v_{-k,\gg}|^4+\Eb\sup_{t\in[0,T]}|V_{-k,\gg}|^4\right)\\
    &\leq C\left(\Eb\sup_{t\in[0,T]}\|\tu\|^2+ \Eb\sup_{t\in[0,T]}\|\tu\|^4+\Eb\sup_{t\in[0,T]}\|v\|^4+\Eb\sup_{t\in[0,T]}\|V\|^4\right)\leq C. 
   \end{align*}
   For any $s,t\in[0,T]$ and function $\varphi$, denote $\DD_{st}\varphi=|\varphi(t)-\varphi(s)|$. Note that  
   \begin{align*}
    &|f_{j,\ell,k,\gg}(t)-f_{j,\ell,k,\gg}(s)|\\
    & \leq  (\DD_{st}\tu_{j,\gg})|\tu_{\ell,\gg}(t)|(|v_{-k,\gg}(t)|+|V_{-k,\gg}(t)|)+|\tu_{j,\gg}(s)|(\DD_{st}\tu_{\ell,\gg})(|v_{-k,\gg}(t)|+|V_{-k,\gg}(t)|)\\
    &\quad + |\tu_{j,\gg}(s)||\tu_{\ell,\gg}(s)|(\DD_{st}v_{-k,\gg}+\DD_{st}V_{-k,\gg}).
   \end{align*}
   By H\"older's inequality and \cref{l.080901}, we have
   \begin{align*}
    &\Eb(\DD_{st}\tu_{j,\gg})|\tu_{\ell,\gg}(t)|(|v_{-k,\gg}(t)|+|V_{-k,\gg}(t)|)\\
    &\leq C\left(\Eb\left(\DD_{st}\tu_{j,\gg}\right)^2\right)^{\frac12}\left(\Eb\left|\tu_{\ell,\gg}\right|^4\right)^{\frac14}\left(\left(\Eb\left|v_{-k,\gg}\right|^4\right)^{\frac14}+\left(\Eb\left|V_{-k,\gg}\right|^4\right)^{\frac14}\right)\\
    &\leq C\left(\Eb\left|\tu_{j,\gg}(t)-\tu_{j,\gg}(s)\right|^2\right)^{\frac12}\left(\Eb\sup_{t\in[0,T]}\left\|\tu\right\|^4\right)^{\frac14}\left(\left(\Eb\sup_{t\in[0,T]}\left\|v\right\|^4\right)^{\frac14}+\left(\Eb\sup_{t\in[0,T]}\left\|V\right\|^4\right)^{\frac14}\right)\\
    &\leq C|t-s|^{\bb}
   \end{align*}
   for some $\bb>0$. Upon applying this argument to the remaining terms we obtain 
   \begin{align*}
    \Eb|f_{j,\ell,k,\gg}(t)-f_{j,\ell,k,\gg}(s)|\leq C|t-s|^{\bb},
   \end{align*}
    where we have applied a similar result of (2) in \cref{{l.080901}} for $V^\alpha$. The proof of the latter is straightforward and we omit it.
   By \cref{l.081101} we have 
   \begin{align}\label{e.081102}
    \Eb\sup_{t\in[0,T]}\left|\int_0^t\overline{I}_{\aa,4}dr\right|&\leq \sum_{j,\ell,k,\gg\in\{\pm 1\}}C_{\ell,k,\gg}\Eb\sup_{t\in[0,T]}\left|\int_0^te^{-2\mi\aa\gg r}f_{j,\ell,k,\gg}(r)dr\right|\notag
    \\
    &\leq C_N\left(\frac{1}{\aa^{\bb}}+\frac{1}{\aa}\right),
   \end{align}
   due to $(j,\ell,k)\in\{j+\ell=k, k_3=0, |j|,|k|,|\ell|\leq N\}$.
   
   Collecting the estimate \eqref{e.081101} and estimates for $\overline{I}_{\aa,i}$ we arrive at 
   \begin{align}
    \begin{split}
        &\frac12\|\overline{R}(t)\|_1^2
        \\
        &\leq C\int_0^t\left(\|\bv(r)\|_{1+\ee}^2+\|\overline{V}(r)\|_{1+\ee}^2\right)\|\overline{R}(r)\|_1^2dr+CN^{-2\ee}\int_0^t\|\tu(r)\|_{2}^4dr + \sup_{t\in[0,T]}\left|\int_0^t\overline{I}_{\aa,4}(r)dr\right|.
    \end{split} 
    \end{align}
    Let $\dd>0$. By \cref{l.080902}, there exist $C_{\dd,1}$ and  $\Om_1\subset \Om$ such that $\Pb(\Om_1)\geq 1-\dd/4$ and 
    \begin{align*}
        \sup_{t\in[0,T]}\|v(t)\|_{1+\ee}^2\leq C_{\dd, 1}, \, \text{ on } \Om_1. 
    \end{align*}
    The same lemma ensures the existence of $C_{\dd,2}$ and $\Om_2\subset \Om$ with $\Pb(\Om_2)\geq 1-\dd/4$ such that
    \begin{align*}
        \sup_{t\in[0,T]}\|\tu(t)\|_{2}^4\leq C_{\dd, 2}, \, \text{ on } \Om_2.
    \end{align*}
    Similarly, by the moment bounds for $V$ in \cref{l.092501}, there are $C_{\dd,3}$ and $\Om_3\subset \Om$ with $\Pb(\Om_3)\geq 1-\dd/4$ such that 
    \begin{align*}
        \sup_{t\in[0,T]}\|V(t)\|_{1+\ee}^2\leq C_{\dd,3}, \, \text{ on } \Om_3. 
    \end{align*}
    Therefore, on $\bigcap_{i=1}^3\Om_i$, we have 
    \begin{align*}
        \|\overline{R}(t)\|_1^2\leq C(\dd)\int_0^t\|\overline{R}(r)\|_1^2dr+ \frac{CTC_{\dd,2}}{N^{2\ee}}+ 2\sup_{t\in[0,T]}\left|\int_0^t\overline{I}_{\aa,4}dr\right|.
    \end{align*}
    Let 
    \[\Om_4=\left\{\om\in\Om: 2\sup_{t\in[0,T]}\left|\int_0^t\overline{I}_{\aa,4}dr\right|\leq \frac{1}{N^{2\ee}}\right\}.\]
    Then by \eqref{e.081102} and Markov's inequality, we have 
    \begin{align*}
        \Pb(\Om_4^c)\leq C_N\left(\frac{1}{\aa^{\bb}}+\frac{1}{\aa}\right). 
    \end{align*}
    Hence on $\bigcap_{i=1}^4\Om_i$ we have 
    \begin{align*}
        \|\overline{R}(t)\|_1^2\leq C(\dd)\int_0^t\|\overline{R}(r)\|_1^2dr+ \left(CTC_{\dd,2}+1\right)\frac{1}{N^{2\ee}}.
    \end{align*}
    By Gr\"onwall's inequality, it then follows that
    \begin{align*}
        \sup_{t\in[0,T]} \|\overline{R}(t)\|_1^2\leq \left(CTC_{\dd,2}+1\right)\frac{e^{TC(\dd)}}{N^{2\ee}}
    \end{align*}
    
    Now choose $N_0(T)$ sufficiently large such that for $N\geq N_0(T)$, 
    \[
        \left(CTC_{\dd,2}+1\right)\frac{e^{TC(\dd)}}{N^{2\ee}}\leq \dd.
    \] 
    Given $N\geq N_0(T)$, choose $\aa_0(N)$ such that for all $\aa\geq \aa_0(N)$,
    \[\Pb(\Om_4^c)\leq C_N\left(\frac{1}{\aa^{\bb}}+\frac{1}{\aa}\right)<\frac{\dd}{4}.\] 
    Then we have 
    \begin{align*}
        \sup_{t\in[0,T]} \|\overline{R}(t)\|_1^2\leq \dd
    \end{align*}
    on $\bigcap_{i=1}^4\Om_i$ with probability
    \begin{align*}
        \Pb\left(\bigcap_{i=1}^4\Om_i\right)\geq 1-\sum_{i=1}^4\Pb\left(\Om_i^c\right)\geq 1-\dd.
    \end{align*}
    In particular,
    \begin{align}\label{e.092101}
        \Pb\left(\sup_{t\in[0,T]} \|\overline{R}(t)\|_1^2\leq \dd\right)\geq 1- \dd, 
    \end{align}
    for all $\aa\geq \aa_0$. 

    Similarly, we can prove that $\sup_{t\in[0,T]} \|\widetilde{R}(t)\|_1^2$ converges to $0$ in probability as $\aa\to\infty$. Indeed, from \eqref{e.080802} we have 
    \begin{align*}
        &\frac12\frac{d}{dt}\|\widetilde{R}\|_1^2+\nu\|\widetilde{R}\|_2^2\notag
        \\
        &=-\left\langle \LL\left(\overline{R}\cdot\nabla_h \tu + \overline{V}\cdot\nabla_h  \widetilde{R} + \frac12\widetilde{R}^{\perp}(\nabla_h ^{\perp}\cdot \bv)+\frac12\widetilde{V}^{\perp}(\nabla_h ^{\perp}\cdot \overline{R})+\widetilde{I}_{\alpha}(\bv,\tu)\right),\LL\widetilde{R}\right\rangle 
    \end{align*}
    with $\widetilde{I}_{\alpha}(\bv,\tu)$ given by \eqref{e.081103}. We will focus on treating the worst term $\widetilde{I}_{\alpha,4}$; the remaining terms, $\widetilde{I}_{\alpha,1}$, $\widetilde{I}_{\alpha,2}$, $\widetilde{I}_{\alpha,3}$ would then be readily handled. We infer from \cref{l.081002} that  
    \begin{align*}
        \widetilde{I}_{\alpha,4} =\frac12 e^{-\aa Jt}\left(w(\tu) \pp_z\tu-w(\tu^{\perp}) \pp_z\tu^{\perp}\right) + \frac12 e^{\aa Jt}\left(w(\tu) \pp_z\tu+w(\tu^{\perp}) \pp_z\tu^{\perp}\right).
    \end{align*}
    Note that we only need to deal with one of the four terms above since they share a common structure. Let 
    \[I_{\alpha}(\widetilde{f},\widetilde{g}) =  e^{\aa Jt}w(\widetilde{f}) \pp_z\widetilde{g}.\]
    Then 
    
    \begin{align*}
        \langle\LL I_{\alpha}(\tu,\tu),\LL\widetilde{R}\rangle
        & =\langle(1-\pi_N)\LL I_{\alpha}(\tu,\tu), \LL\widetilde{R} \rangle + \langle\pi_N\LL I_{\alpha}(\tu,(1-\pi_N)\tu), \LL\widetilde{R} \rangle \\
        &\quad + \langle\pi_N\LL I_{\alpha}((1-\pi_N)\tu,\tu), \LL\widetilde{R} \rangle + \langle\pi_N\LL I_{\alpha}(\pi_N\tu,\pi_N\tu), \LL\widetilde{R} \rangle\\
        &\quad - \langle\pi_N\LL I_{\alpha}((1-\pi_N)\tu,(1-\pi_N)\tu), \LL\widetilde{R} \rangle = :\sum_{i=1}^5I_{\alpha,i}. 
    \end{align*}
    Again by the Poincar\'e inequality, \cref{l.080801} and Sobolev embedding $H^{3/4}(\mathbb T^3)\subset L^4(\mathbb T^3)$, we have 
    \begin{align*}
        I_{\alpha,1}&\leq C\|\LL^{\ee}\left(w(\tu) \pp_z\tu\right)\|\|\LL^{2-\ee}(1-\pi_N)e^{-\aa Jt}\widetilde{R}\|\\
        &\leq \frac{C}{N^{\ee}}\|\widetilde{R}\|_{2}\left(\|\LL^{\ee}w(\tu)\|_{L^{4}}\|\pp_z\tu\|_{L^4}+\|w(\tu)\|_{L^4}\|\LL^{\ee}\pp_z\tu\|_{L^{4}}\right)\\
        &\leq \frac{C}{N^{\ee}}\|\widetilde{R}\|_{2}\|\tu\|_{2}^2. 
    \end{align*}
    By the standard estimate $\|uv\|\leq C \|u\|_{s}\|v\|_{3/2-s}$ for $s\in(0,3/2)$, one has 
    \begin{align*}
        I_{\alpha,2}&\leq C\|w(\tu) \pp_z(1-\pi_N)\tu\|\|\LL^2e^{-\aa Jt}\widetilde{R}\|\\
        &\leq C\|\widetilde{R}\|_{2}\|w(\tu)\|_{1}\|\pp_z(1-P_N)\tu\|_{\frac12}\\
        &\leq \frac{C}{N^{\ee}}\|\widetilde{R}\|_{2}\|\tu\|_{2}\|\tu\|_{3/2+\ee},
    \end{align*}
    and similarly both $I_{\alpha,3}, I_{\alpha,5}$ can be bounded as $I_{\alpha,2}$. The term $I_{\alpha,4}$ can be bounded  similar to \eqref{e.081102}. Therefore, $\sup_{t\in[0,T]} \|\widetilde{R}(t)\|_1^2$ converges in probability to $0$ by arguing in the same way as we did in establishing \eqref{e.092101}. 

\end{proof}

\subsection{Proof of Theorem \ref{thm:convergence_B}}\label{s.100701}

    It is standard that for $v_0\in H^2$, both equations \eqref{e.080803} and \eqref{e.080602} have global solutions such that, almost surely, the solution trajectories are in $C([0,T];H^2)\cap L^{2}(0,T;H^3)$. We denote by
    \[
        M_{\aa}(t) =\int_0^t G_{\aa}(r)dW(r).
    \]
    the Gaussian martingale, where
    \[
        G_{\aa}(r)=\Pc_h{\bP}\ss+e^{\aa Jr}(I-\bP)\ss.
    \]
    As a preliminary lemma, we identify the covariance operator of $M_{\aa}$.
    
    \begin{lemma}\label{lem:covariance}
    Suppose that $\ss$ commutes with $\bP$. Then, given $f,g\in H$ and $s,t\geq 0$, one has
        \begin{align*}
        &\Eb[\langle M_\aa(t),f\rangle\langle M_\aa(s),g\rangle]\notag
        \\
        &=(t\wedge s)\left\langle \ss \bP\Pc_h f, \ss \bP\Pc_h g\right\rangle + \int_0^{t\wedge s}\left\langle \ss(I-\bP)e^{-\aa Jr}f, \ss(I-\bP)e^{-\aa Jr}g\right\rangle.
        \end{align*}
    \end{lemma}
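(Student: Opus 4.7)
The plan is to reduce the claim to a direct application of the Itô isometry for stochastic integrals against a cylindrical Wiener process, followed by an orthogonality argument exploiting the barotropic/baroclinic decomposition.

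First, I would invoke the Itô isometry in the form
\begin{align*}
\Eb[\langle M_\aa(t),f\rangle\langle M_\aa(s),g\rangle]=\int_0^{t\wedge s}\langle G_\aa(r)^{*}f,\,G_\aa(r)^{*}g\rangle\,dr,
\end{align*}
which follows by expanding $M_\aa(t)=\sum_j\int_0^t G_\aa(r)\boldsymbol{e}_j\,dW_j(r)$ against the orthonormal basis $\{\boldsymbol{e}_j\}$ and using independence of the one-dimensional Brownian motions $W_j$.

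Next, I would compute $G_\aa(r)^{*}$. Using the self-adjointness of $\ss$, $\bP$, and $\Pc_h$, together with $(e^{\aa Jr})^{*}=e^{-\aa Jr}$ (since $J$ is antisymmetric), one gets
\begin{align*}
G_\aa(r)^{*}=\ss\bP\Pc_h+\ss(I-\bP)e^{-\aa Jr}.
\end{align*}
Expanding the inner product $\langle G_\aa(r)^{*}f,G_\aa(r)^{*}g\rangle$ then yields four terms: two diagonal terms matching the right-hand side of the lemma and two cross terms.

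The main (and only non-trivial) step will be showing that the cross terms vanish pointwise in $r$. Here I would use the hypothesis that $\ss$ commutes with $\bP$, which implies that $\ss\bP\Pc_h f$ lies in the barotropic subspace $\bP H$, whereas $\ss(I-\bP)e^{-\aa Jr}g$ lies in the baroclinic subspace $(I-\bP)H$. Since barotropic and baroclinic modes are orthogonal in $H$, the cross terms
\begin{align*}
\langle\ss\bP\Pc_h f,\,\ss(I-\bP)e^{-\aa Jr}g\rangle\quad\text{and}\quad\langle\ss(I-\bP)e^{-\aa Jr}f,\,\ss\bP\Pc_h g\rangle
\end{align*}
both vanish for every $r\geq 0$.

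Finally, the diagonal term $\langle\ss\bP\Pc_h f,\ss\bP\Pc_h g\rangle$ is independent of $r$, so integrating over $[0,t\wedge s]$ produces the factor $t\wedge s$, while the other diagonal term directly gives the stated integral. Combining these pieces yields the desired identity. The computation is essentially bookkeeping once the key orthogonality is identified, so no further difficulties should arise.
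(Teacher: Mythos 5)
Your proof is correct and follows essentially the same approach as the paper: the paper's proof likewise uses the Itô isometry (written out via the basis $\{\boldsymbol{e}_k\}$, which is equivalent to your adjoint formulation $\langle G_\aa(r)^*f,G_\aa(r)^*g\rangle$), expands into diagonal and cross terms, and kills the cross terms via the orthogonality of the barotropic and baroclinic subspaces, using the hypothesis that $\ss$ commutes with $\bP$.
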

    
    \begin{proof}
    Observe that 
    \begin{align*}
        &\Eb[\langle M_\aa(t),f\rangle\langle M_\aa(s),g\rangle]\\
        & = \Eb \left[\left\langle \int_0^t G_{\aa}(r)dW(r),f\right\rangle \left\langle \int_0^s G_{\aa}(r)dW(r),g\right\rangle\right]\\
        & = \Eb \left[\int_0^t\sum_k \left\langle G_{\aa}(r)\boldsymbol{e}_k,f\right\rangle dW_k(r) \int_0^s \sum_k\left\langle G_{\aa}(r)\boldsymbol{e}_k,g\right\rangle dW_k(r)\right]\\
        & = \int_0^{t\wedge s}\sum_{k}\left\langle G_{\aa}(r)\boldsymbol{e}_k,f\right\rangle \left\langle G_{\aa}(r)\boldsymbol{e}_k,g\right\rangle dr \\
        & = \int_0^{t\wedge s}\sum_{k}\left\langle (\Pc_h{\bP}\ss+e^{\aa Jr}(I-\bP)\ss)\boldsymbol{e}_k,f\right\rangle \left\langle (\Pc_h{\bP}\ss+e^{\aa Jr}(I-\bP)\ss)\boldsymbol{e}_k,g\right\rangle dr \\
        & =(t\wedge s)\left\langle \ss \bP\Pc_h f, \ss \bP\Pc_h g\right\rangle + \int_0^{t\wedge s}\left\langle \ss(I-\bP)e^{-\aa Jr}f, \ss(I-\bP)e^{-\aa Jr}g\right\rangle, 
    \end{align*}
    where we used the following orthogonality at the last step
    \begin{align*}
        \left\langle \ss(I-\bP)e^{-\aa Jr}f, \ss \bP \Pc_hg \right\rangle =0, \quad f, g\in H, 
    \end{align*}
    which holds owing to the assumption that $\ss$ commutes with $\bP$. 
    \end{proof}

We will also make use of \cite[Theorem A2]{flandoli2012stochastic}, which allows one to identify Cesaro averages associated to strongly continuous unitary groups on Hilbert spaces as covariance operators. For the sake of completeness, we restate this result here.

\begin{lemma}[\hspace{-0.01pt}\cite{flandoli2012stochastic}]\label{lem:Cesaro}
Let $\Upsilon=(\Upsilon)_{t\in\mathbb{R}}$ be a strongly continuous unitary group on a Hilbert space $H$. 
If $Q$ is a positive trace class operator, then
    \begin{align}\notag
        C_Q:=\lim_{T\to\infty}\frac{1}T\int_0^T\Upsilon_tQ\Upsilon_{-t}dt,
    \end{align}
exists in operator norm, where $C_Q$ is a positive trace class operator.
\end{lemma}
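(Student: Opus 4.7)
The plan is to transfer the problem to the Hilbert space of Hilbert--Schmidt operators on $H$, where the conjugation $Q\mapsto\Upsilon_tQ\Upsilon_{-t}$ defines a strongly continuous unitary group, and then to invoke the classical von Neumann mean ergodic theorem to secure the Ces\`aro limit. Since the operator norm is dominated by the Hilbert--Schmidt norm, convergence will transfer automatically to operator norm, establishing the existence of $C_Q$. Positivity and the trace class property will then follow from elementary properties of the trace together with a lower semicontinuity argument.

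To begin, observing that every trace class operator is in particular Hilbert--Schmidt, I would view $Q$ as an element of the Hilbert space $L_2(H)$ of Hilbert--Schmidt operators on $H$, equipped with the inner product $\langle A,B\rangle_{\HS}=\mathrm{tr}(AB^*)$. I would then introduce the conjugation map $T_tA:=\Upsilon_tA\Upsilon_{-t}$ on $L_2(H)$ and verify that $\{T_t\}_{t\in\mathbb{R}}$ forms a strongly continuous one-parameter unitary group: the group law and HS-isometry follow directly from the group property and unitarity of $\Upsilon$, while strong continuity can be checked first on finite-rank operators via an expansion $A=\sum_{n=1}^{N}\sigma_n\langle\cdot,e_n\rangle f_n$ together with a telescoping of the differences $\langle\cdot,\Upsilon_te_n\rangle\Upsilon_tf_n-\langle\cdot,e_n\rangle f_n$, then extended to all of $L_2(H)$ by density of finite-rank operators and the uniform bound $\|T_t\|=1$.

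Having set this up, I would apply the continuous-time von Neumann mean ergodic theorem to $\{T_t\}$ on $L_2(H)$. This immediately yields the existence of
\[
    C_Q=\lim_{T\to\infty}\frac{1}{T}\int_0^T T_tQ\,dt
\]
in HS norm, realized as the orthogonal projection of $Q$ onto the fixed subspace $\{A\in L_2(H):T_tA=A\text{ for all }t\in\mathbb{R}\}$. Since $\|\cdot\|_{\mathrm{op}}\leq\|\cdot\|_{\HS}$, the convergence also holds in operator norm, which establishes the first claim.

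It then remains to verify positivity and the trace class property of $C_Q$. Positivity is immediate: each $T_tQ$ is positive since $\Upsilon_{-t}=\Upsilon_t^*$ and $Q\geq 0$, hence so is every Ces\`aro mean, and operator norm limits of positive operators remain positive. For the trace class property, the key observation is that $\mathrm{tr}(T_tQ)=\mathrm{tr}(Q)$ for all $t$, so every Ces\`aro mean has trace equal to $\mathrm{tr}(Q)$; pairing with any finite orthonormal set $\{\psi_j\}_{j=1}^{N}\subset H$, passing to the limit in $T$, letting $N\to\infty$, and invoking positivity of $C_Q$ then yields $\mathrm{tr}(C_Q)\leq \mathrm{tr}(Q)<\infty$. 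The most delicate point in this plan is verifying strong continuity of the conjugation group on $L_2(H)$, but the reduction to finite-rank operators makes this essentially routine, so I do not anticipate any serious obstacle.
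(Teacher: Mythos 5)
The paper states this result as a citation of Theorem A2 in \cite{flandoli2012stochastic} and supplies no proof of its own, so there is nothing in the present manuscript to compare against. Your argument is correct and is the standard route: lift the conjugation $T_tA=\Upsilon_tA\Upsilon_{-t}$ to a strongly continuous unitary group on the Hilbert space of Hilbert--Schmidt operators (with strong continuity checked on finite-rank operators and extended by density), apply the von Neumann mean ergodic theorem there, transfer to operator norm via $\|\cdot\|_{\mathrm{op}}\le\|\cdot\|_{\HS}$, and deduce positivity and the trace-class bound $\mathrm{tr}(C_Q)\le\mathrm{tr}(Q)$ from the unitary invariance of the trace, convexity of the Ces\`aro averages, and lower semicontinuity of the trace on positive operators under operator-norm convergence.
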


We are now ready to prove \cref{thm:convergence_B}.

\begin{proof}[Proof of Theorem \ref{thm:convergence_B}]
    The proof is divided into three steps. First, we establish tightness of the stochastic integrals $\{M_{\aa}\}_{\aa}$ and identify its limit. Second, we establish tightness of the family of laws $\{\Lc^{\infty,\aa}\}_{\aa}$ corresponding to the auxiliary system \eqref{e.080803}. Lastly, we prove convergence of $\Lc^{\infty,\aa}$ to $\Lc^{\infty}$.
    
    \subsubsection*{Step 1: Tightness and the limit of the stochastic integral $M_{\aa}$} By the Burkholder-Davis-Gundy inequality, we have for $p>1$, 
    \begin{align*}
        \Eb\|M_{\aa}(t)-M_{\aa}(s)\|_1^p\leq C\left(\int_s^t\|G_{\aa}(r)\|_{L_2(H,H^1)}^2 dr\right)^{p/2}\leq C\|\ss\|_{L_2(H,H^1)}^p|t-s|^{p/2}. 
    \end{align*}
    Hence,  for any $\gg<\frac12$  and $p>1$, 
    \begin{align*}
        \Eb\left[\int_0^T\int_0^T\frac{\|M_{\aa}(t)-M_{\aa}(s)\|_1^p}{|t-s|^{1+\gg p}}dtds\right]\leq \int_0^T\int_0^T\frac{C}{|t-s|^{1+(\gg-1/2) p}}dtds<\infty
    \end{align*}
    independent of $\aa$. In particular, the family $\{M_{\aa}\}_{\aa}$ is uniformly bounded in $L^{1}(\Om; W^{\gg,p}(0,T;H^1))$.  Assuming further that $\gg p>1$, we invoke the fact that $W^{\gg,p}(0,T;H^1)$ is compactly embedded into $C([0,T],H)$. We therefore deduce that $\{M_{\aa}\}_\aa$ is tight in $C([0,T],H)$.

    Next, observe that
    \begin{align*}
        \frac{1}{t\wedge s}\int_0^{t\wedge s}e^{\aa Jr}(I-\bP)\ss^2(I-\bP)e^{-\aa Jr}dr 
        = \frac{1}{(t\wedge s)\aa}\int_0^{(t\wedge s)\aa}e^{J\tau}(I-\bP)\ss^2(I-\bP)e^{-J\tau}d\tau.
    \end{align*}
    We then invoke \cref{lem:Cesaro}, with $\Upsilon_t=e^{Jt}(I-\bP)$, to deduce that as $\alpha\to\infty$ this operator converges in operator norm to a positive trace class operator given by 
    \[
        \widetilde{Q} := \lim_{T\to\infty}\frac{1}{T}\int_0^{T}e^{J\tau}(I-\bP)\ss^2(I-\bP)e^{-J\tau}d\tau.
    \]
    We remark that the limit $\widetilde{Q}$ will be the same when $\alpha\to -\infty$.
    Therefore, from \cref{lem:covariance} we deduce 
    \[
        \lim_{\aa\to\infty}\Eb[\langle M_\aa(t),f\rangle\langle M_\aa(s),g\rangle] = (t\wedge s)\left(\left\langle \Pc_h \bP\ss^2 \bP\Pc_h f, g\right\rangle + \langle \widetilde{Q}f, g\rangle \right).
    \]

    Now, since $\{M_{\aa}\}_{\aa}$ is tight in $C([0,T],H)$, from any subsequence one can extract a further subsequence that converges in law to a Gaussian process, $M$, on $C([0,T],H)$ with covariance defined by
    \begin{align*}
        G:=\Pc_h \bP\ss^2 \bP\Pc_h+\widetilde{Q}. 
    \end{align*}
    The covariance uniquely determines $M$, independent of the choice of subsequence. Therefore, the whole sequence $\{M_{\aa}\}_\aa$ converges in law to $M$ in $C([0,T],H)$. Observe that $M$ can subsequently be identified as a Brownian motion $\sqrt{G}\widetilde{W}$ in $H$ with covariance $G$, which is the driving noise of system \eqref{e.080602}  where $\widetilde{W}$ is an independent copy of the cylindrical Wiener process $W$.

    \subsubsection*{Step 2: Tightness of $\{\Lc^{\infty,\aa}\}$}
   First, we show that $\overline{V}^{\aa}\in W^{\gg,p}(0,T;H^1)$, for any $\gg<\frac12$ and $p\geq 2$. Observe that from \eqref{e.080803}, we have
    \begin{align*}
        \overline{V}^{\aa}(t) = \overline{V}_0^{\aa} - \int_0^t\Pc_h(\overline{V}^{\aa}\cdot \nabla_h  \overline{V}^{\aa})dr + \nu\int_0^t\DD \overline{V}^{\aa} dr + \int_0^t \Pc_h\overline{\ss}dW(r).
    \end{align*}
By H\"older's inequality and interpolation, we have
    \begin{align*}
        \|\overline{V}^{\aa}\cdot \nabla_h  \overline{V}^{\aa}\|_1\lesssim \|\overline{V}^{\aa}\|_{L^{\infty}}\|\nabla_h \overline{V}^{\aa}\|_1+\|\overline{V}^{\aa}\|_{1}\|\nabla_h \overline{V}^{\aa}\|_{L^{\infty}}\lesssim \|\overline{V}^{\aa}\|_{2}\|\overline{V}^{\aa}\|_{3}.
    \end{align*}
    It then follows that
    \begin{align*}
        \|\overline{V}^{\aa}(t)-\overline{V}^{\aa}(s)\|_1&\leq \int_s^t\|\overline{V}^{\aa}\cdot \nabla_h  \overline{V}^{\aa}\|_1dr + \nu \int_s^t\|\overline{V}^{\aa}\|_3dr + \left\|\int_s^t \Pc_h\overline{\ss}dW(r)\right\|_1\\
        &\leq C|t-s|^{\frac12}\left(\sup_{t\in[0,T]}\|\overline{V}^{\aa}\|_2+1\right)\left(\int_s^t\|\overline{V}^{\aa}\|_3^2dr\right)^{1/2}+ \left\|\int_s^t \Pc_h\overline{\ss}dW(r)\right\|_1.
    \end{align*}
    By the Burkholder-Davis-Gundy inequality and \cref{l.092501}, we then have
    \begin{align*}
        \Eb\|\overline{V}^{\aa}(t)-\overline{V}^{\aa}(s)\|_1^p&\leq C|t-s|^{p/2}+\Eb\left\|\int_s^t \Pc_h\overline{\ss}dW(r)\right\|_1^p\\
        &\leq C|t-s|^{p/2}+C\|\ss\|_{L_2(H,H^1)}^p|t-s|^{p/2},
    \end{align*}
    for any $p>1$. Thus, given $\gg<\frac12$ and $p\geq 2$, we have
    \begin{align*}
        \Eb\int_0^T\int_0^T\frac{\|\overline{V}^{\aa}(t)-\overline{V}^{\aa}(s)\|_1^p}{|t-s|^{1+\gg p}}dtds\leq C\int_0^T\int_0^T\frac{1}{|t-s|^{1+(\gg-1/2)p}}dtds <\infty,
    \end{align*}
    as claimed.
    
    Next, we show that $\tV^{\aa}\in W^{\gg,p}(0,T;H^1)$ for $\gg<1/2$ and $p\geq 2$ as well.  Indeed, from the equation of $\tV^{\aa}$ we have 
    \begin{align*}
        \begin{split}
            \tV^\alpha(t) = \tV^\alpha(0)-\int_0^t\left(\bV^\alpha\cdot\nabla_h \tV^\alpha+\frac12(\tV^{\alpha})^{\perp}\left(\nabla_h ^{\perp}\cdot\bV^\alpha\right)\right)dr + \nu\int_0^t\Delta \tV^\alpha dr+ \int_0^t e^{\aa Jr}\widetilde{\sigma}dW(r).
        \end{split}
    \end{align*}
    This implies
    \begin{align*}
        \|\tV^\alpha(t)-\tV^\alpha(s)\|_1&\leq \int_s^t\left\|\bV^\alpha\cdot\nabla_h \tV^\alpha+\frac12(\tV^{\alpha})^{\perp}\left(\nabla_h ^{\perp}\cdot\bV^\alpha\right)\right\|_1dr\\
        &+\nu\int_s^t\|\tV\|_3dr +\left\|\int_s^t e^{\aa Jr}\widetilde{\sigma}dW(r)\right\|_1. 
    \end{align*}
    By H\"older's inequality, Ladyzhenskaya's inequality, and the embedding $H^2\subset W^{1,4}$, we see that
    \begin{align*}
    \|\bV^{\aa}\cdot\nabla_h \tV^{\aa}\|_1\lesssim \|\bV^{\aa}\|_{L^{\infty}}\|\|\nabla_h \tV^{\aa}\|_{1}+\|\bV^{\aa}\|_{W^{1,4}}\|\nabla_h \tV^{\aa}\|_{L^4}\lesssim \|\bV^{\aa}\|_2\|\tV^{\aa}\|_{2}.
    \end{align*}
    Similarly, we have
    \begin{align*}
        \left\|(\tV^{\alpha})^{\perp}\left(\nabla_h ^{\perp}\cdot\bV^\alpha\right)\right\|_1\lesssim \|\bV^{\aa}\|_2\|\tV^{\aa}\|_{2}. 
    \end{align*}
    Therefore, by the Burkholder-Davis-Gundy inequality and \cref{l.092501}, we deduce
    \begin{align*}
        \Eb\|\tV^\alpha(t)-\tV^\alpha(s)\|_1^p\leq C|t-s|^{p/2},
    \end{align*}
    or any $\gg<1/2$ and $p\geq 2$, as claimed.

    Observe that for $\gg p>1$, $W^{\gg,p}(0,T;H^1)$ is compactly embedded into $C([0,T],H)$. In addition, the above estimates show that $V^{\aa} = (\bV^{\aa},\tV^{\aa})$ is bounded in $L^1(\Om;W^{\gg,p}(0,T;H^1))$ uniformly in $\aa$, provided that $\gg<1/2$, $p\geq 2$, and $\gg p>1$. Hence the laws, $\Lc^{\infty,\aa}$ corresponding to $V^{\aa}$, form a tight family in $C([0,T],H)$. 
    
    \subsubsection*{Step 3: Convergence of $\Lc^{\infty,\aa}$ to $\Lc^{\infty}$} From the previous two steps, we know that the law of the family $\{(V^{\aa},M_{\aa})\}_{\aa}$ is tight in $C([0,T];H)\times C([0,T];H)$. Thus, from any subsequence, there is a further subsequence whose law converges to some probability measure $\mathcal{M}$. \cref{l.092501} ensures that $\Lc^{\infty,\aa}$ is concentrated on $C([0,T],H^2)$. Thus, by exactly the same argument as in the proof of \cite[Theorem 4]{flandoli2012stochastic}, we infer that any such subsequence of $\Lc^{\infty,\aa}$ converges in $C([0,T],H^1)$. Now recall that in the first step we have shown that $M_{\aa}$ converges in law to $M$. On the other hand, by a standard argument using Skorokhod's representation theorem, one can verify that the velocity component of the limit measure, $\mathcal{M}$, coincides with the law of some solution to equation \eqref{e.080602}. By uniqueness of solutions to \eqref{e.080602}, this law is exactly $\Lc^{\infty}$, which is the law of $V$. Consequently, any convergent subsequence of the family $\{(V^{\aa},M_{\aa})\}_{\aa}$ has $\mathcal{M}$ as its limit. Hence, $\Lc^{\infty,\aa}$ converges weakly to $\Lc^{\infty}$ in $C([0,T],H^1)$. This completes the proof.

\end{proof}

\section{Exponential mixing of the limit resonant system}\label{s.121504}
In this section, we prove that the limit resonant system
\begin{align}\label{e.081201}
    \begin{split}
        &\partial_t\bV+\Pc_h(\bV\cdot\nabla_h  \bV) = \nu\DD_h \bV + {\bP}G \partial_tW,\\
        &\partial_t\tV +\bV\cdot\nabla_h \tV+\frac12\tV^{\perp}\left(\nabla_h ^{\perp}\cdot\bV\right)=\nu\Delta \tV+ (I-\bP)G\partial_tW, 
    \end{split}
\end{align}
is uniquely ergodic in $H^1$, with an exponentially mixing invariant measure $\mu$. Note that for notational convenience, we use $W$ over $(\Om,\Fc,\Pb)$ instead of $\widetilde{W}$, and we denote the noise coefficient as $G$ so that  
\[{\bP}G = \Pc_h\overline{\ss}, \quad (I-\bP)G = \sqrt{\widetilde{Q}}.\] 
For $ k=(k_1,k_2,k_3)\in K:=2\pi\mathbb Z^3\setminus\{(0,0,0)\}$, let $\{(\ll_k, e_k)\}_{k\in K}$ represent the real eigenvalue-eigenvector pairs of $-\Delta$ on $\Tb^3$ as introduced in \eqref{e.011501}. 
\begin{comment}
Recall that we set 
\[\overline{K}=\left\{(k_1,k_2,k_3)\in K :k_3=0\right\}, \quad \widetilde{K} = K\setminus \overline{K}.\]
Then we define the projections 
\begin{align*}
    &\bP_N: H\to \mathrm{span}\left\{\boldsymbol{e}_k: \ll_k\leq N, k\in \overline{K}\right\},\\
    &\tP_N: H\to \mathrm{span}\left\{\boldsymbol{e}_k: \ll_k\leq N, k\in \widetilde{K}\right\}.
\end{align*}
\end{comment}
The main result of this section is the following theorem.

\begin{theorem}[Exponential mixing in $H^1$]\label{t.121701}
    There exists $N=N(\nu,\sigma)>0$ such that if
    \begin{align*}
        \bP_N H\subset \mathrm{Range}({\bP}G)\quad \text{ and }\quad \tP_N H\subset \mathrm{Range}((I-\bP)G),
    \end{align*}
    then the system \eqref{e.081201} has a unique ergodic invariant measure $\mu\in \Pscr(H^1)$.  Moreover, there are constants $C,c>0$ such that 
    \begin{align*}
        \left|P_{t}\varphi(v_0)-\int_{H^1}\varphi(v)\mu(dv)\right|\leq C(1+\|v_0\|^{2}+\|\bv_0\|_1^{2})\|\varphi\|_{C^{1}(H^1)}e^{-ct},
    \end{align*}
    for all $t\geq0$, $v_0\in H^1$, and $\varphi\in C^{1}(H^1)$.
\end{theorem}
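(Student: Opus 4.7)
The plan is to follow the two-step strategy outlined in the introduction: first establish exponential mixing of \eqref{e.081201} in $L^2$ via a coupling argument, and then upgrade to $H^1$ via parabolic smoothing. As a preliminary, I would establish well-posedness and uniform-in-time $H^1$ and $H^2$ moment bounds for solutions, from which existence of an invariant measure $\mu \in \Pscr(H^1)$ follows by Krylov-Bogoliubov. The theorem then reduces to a contraction estimate of the form $|P_t\varphi(v_0) - P_t\varphi(v_0')| \lesssim (\cdots)\, e^{-ct} \|\varphi\|_{C^1(H^1)}$ between two copies, from which uniqueness of $\mu$ and the stated mixing follow by standard duality.

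The key structural observation is that the barotropic equation in \eqref{e.081201} is the classical 2D stochastic Navier-Stokes system, driven by a noise that is non-degenerate on low modes via $\bP_N H \subset \mathrm{Range}(\bP G)$, and it is decoupled from the baroclinic component. I would apply the generalized coupling framework of \cite{butkovsky2020generalized} to construct a coupling of two barotropic solutions $\bV^{(1)},\bV^{(2)}$ that merge at exponential rate through a Girsanov-type modification of the driving noise acting only on the barotropic forcing $\bP G \pp_tW$. Because this modification does not alter the baroclinic noise $(I-\bP)G \pp_tW$ in either copy, the baroclinic noises are shared on the coupled probability space, and the baroclinic difference $\widetilde{R} := \tV^{(1)} - \tV^{(2)}$ satisfies a deterministic equation forced only by $\overline{R} := \bV^{(1)} - \bV^{(2)}$.

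The $L^2$ energy identity for $\widetilde{R}$ exhibits two crucial cancellations: $\int \bV^{(1)} \cdot \nabla_h \widetilde{R} \cdot \widetilde{R}\,dx = 0$ by horizontal incompressibility, and $\int \widetilde{R}^\perp (\nabla_h^\perp \cdot \bV^{(1)}) \cdot \widetilde{R}\,dx = 0$ because $\widetilde{R}^\perp \perp \widetilde{R}$ pointwise. Hence, when $\overline{R} \equiv 0$, the baroclinic difference decays in $L^2$ purely from the Laplacian, using the Poincar\'e inequality on the zero-mean baroclinic mode. The residual forcing from $\overline{R}$, producing drift contributions $\overline{R} \cdot \nabla_h \tV^{(2)}$ and $(\tV^{(2)})^\perp(\nabla_h^\perp \cdot \overline{R})$, can then be absorbed using the exponential $L^2$ decay of $\overline{R}$ from the barotropic coupling together with the uniform $H^1$ moment bounds on $\tV^{(2)}$ (with the baroclinic non-degeneracy $\tP_N H \subset \mathrm{Range}((I-\bP)G)$ enabling such bounds and ensuring the process is genuinely supported on all of $H^1$). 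This yields an $L^2$ contraction of the form $\Eb \|V^{(1)}(t) - V^{(2)}(t)\|^2 \le C e^{-ct}(1 + \|v_0\|^2 + \|\bv_0\|_1^2)$.

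To promote $L^2$ mixing to $H^1$, I would apply the parabolic smoothing strategy of \cite{kuksin2012mathematics}: decomposing $P_t = P_1 \circ P_{t-1}$ and exploiting that \eqref{e.081201} maps $L^2$-data into $H^1$ on unit time intervals, the $L^2$ contraction on $[0,t-1]$ lifts to an $H^1$ contraction on $[0,t]$ through linearization of $P_1$ and interpolation against the $H^2$ moment bounds. The main technical obstacle lies in executing the $H^1$-level anisotropic estimates needed both for closing the moment bounds and for the smoothing step, since the cancellations available at $L^2$ level are lost once a factor of $\LL$ is placed on $\widetilde{R}$. Accordingly, the transport term $\bV \cdot \nabla_h \tV$ and stretching term $\tV^\perp(\nabla_h^\perp \cdot \bV)$ must be controlled by splitting integrals in $(x,y)$ and $z$ and invoking 2D Ladyzhenskaya-type estimates in the horizontal variables, exploiting that $\bV$ depends only on $(x,y)$. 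The asymmetric dependence $\|v_0\|^2 + \|\bv_0\|_1^2$ in the final estimate tracks precisely which norms are used: the barotropic Lyapunov structure consistent with 2D NS requires an $H^1$ seed, while the baroclinic contribution survives the bootstrap through $L^2$ control plus heat regularization.
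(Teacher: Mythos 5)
Your overall architecture---an $L^2$ spectral gap via coupling followed by a parabolic bootstrap to $H^1$---matches the paper's. You also correctly identify the pointwise orthogonality $\widetilde{R}^\perp\perp\widetilde{R}$ and horizontal incompressibility cancellations in the baroclinic $L^2$ balance, the anisotropic Ladyzhenskaya estimates exploiting the 2D structure of $\bV$, and the asymmetric Lyapunov weight $\|v_0\|^2+\|\bv_0\|_1^2$. The bootstrap step (H\"older continuity of $P_1$ interpolated against $H^2$ moments, combined with exponential-moment cutoffs) is essentially the content of \cref{l.120301} and \cref{l.121701}.

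However, the coupling construction has a genuine gap. You propose to Girsanov-modify only the barotropic noise $\bP G\,\partial_t W$, letting the baroclinic noise be shared, and then to argue that $\widetilde{R}:=\tV^{(1)}-\tV^{(2)}$ decays purely from the heat dissipation once $\overline{R}\to 0$. But when pairing the residual forcing $\overline{R}\cdot\nabla_h\tV^{(2)}+\tfrac12(\tV^{(2)})^\perp(\nabla_h^\perp\cdot\overline{R})$ with $\widetilde{R}$ and applying the very Ladyzhenskaya estimates you describe, a term of the form $\frac{C}{\nu}\|\nabla\tV^{(2)}\|^2\|\widetilde{R}\|^2$ is unavoidable (cf.\ \eqref{e.121504}--\eqref{e.121505}); it must be beaten by a decay rate proportional to $\|\widetilde{R}\|^2$. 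Under your coupling, the only such decay available is the Poincar\'e contribution $\nu\lambda_1\|\widetilde{R}\|^2$, which is fixed and cannot be amplified by choosing $N$ large, whereas the time average of $\|\nabla\tV\|^2$ scales like $\|G\|_{L_2(H,H)}^2/\nu$ by the energy balance. Hence the pathwise contraction (H1) of \cref{thm:butkovsky} would hold only with $\zeta=\nu\lambda_1$, and the requirement $\zeta>\kappa b/\mu$ from (H2) becomes a small-noise (or large-viscosity) condition, which is not what the theorem asserts. The paper circumvents this by adding the Foias--Prodi control to \emph{both} components in \eqref{e.121501}: in particular the term $-\frac{\nu\lambda_N}{2}\tP_N(\tV-\tVc)$ in the baroclinic equation delivers the enhanced rate $\zeta=\nu\lambda_N$, which can be made arbitrarily large. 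This is precisely why the hypothesis $\tP_N H\subset\mathrm{Range}((I-\bP)G)$ appears: it makes the pseudo-inverse $((I-\bP)G)^{-1}$ well-defined so that the baroclinic control can be written as a Girsanov shift of the driving noise. You attribute this hypothesis to moment bounds and to the support of the invariant measure, but its actual role is to license the baroclinic Girsanov modification; without it the coupling argument does not close for general $\sigma$.
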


\subsection{Proof of Theorem \ref{t.121701}}

The proof of \cref{t.121701} will proceed in two steps. First we will establish the existence of spectral gap with respect to the Wasserstein distance $\rho$ (\cref{t.081201} below); this implies exponential mixing in $H$ (\cref{c:081201} below). Subsequently, we will bootstrap this result to exponential mixing in $H^1$ by exploiting higher-order moment bounds.

To state the theorem constituting the first step, let us first introduce the the Wasserstein-1 metric on $\Pscr(H^1)$: let $d$ denote the distance 
    \begin{align}\label{def:d}
        d(u,v):=\|u-v\|\wedge 1.
    \end{align}
We denote the Wasserstein-1 metric by $\rho$ given by
    \begin{align}\label{e.121604}
        \rho(\mu_1,\mu_2): = \inf_{\Pi\in \mathcal{C}(\mu_1,\mu_2)}\int_{H^1\times H^1}d(u,v)\Pi(dudv),
    \end{align}
where $\mathcal{C}(\mu_1,\mu_2)$ is the set of couplings between $\mu_1$ and $\mu_2$. We will then prove the following result.

\begin{theorem}[Exponential mixing in $H$]\label{t.081201}
    There exists $N=N(\nu,\sigma)>0$ such that if
    \begin{align}\label{e.121602}
        \bP_N H\subset \mathrm{Range}({\bP}G)\quad \text{ and }\quad \tP_N H\subset \mathrm{Range}((I-\bP)G),
    \end{align}
    then the system \eqref{e.081201} has a unique ergodic invariant measure $\mu\in \Pscr(H^1)$. Moreover, there exist constants $C,c>0$ such that 
    \begin{align*}
        \rho(P_t(v,\cdot),\mu)\leq C(1+\|v\|^2+\|\bv\|_1^2)e^{-c t},
    \end{align*}
    for all $t\geq0, v\in H^1$. 
\end{theorem}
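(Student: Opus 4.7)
The plan is to prove Theorem \ref{t.081201} via a generalized coupling argument in the spirit of \cite{butkovsky2020generalized}, exploiting a decisive structural feature of the limit resonant system: once two trajectories share a common barotropic component $\bV$ and a common noise, the difference $R := \tV^1 - \tV^2$ of their baroclinic components satisfies the \emph{homogeneous} linear problem
\begin{equation*}
\partial_t R + \bV \cdot \nabla_h R + \tfrac{1}{2} R^\perp (\nabla_h^\perp \cdot \bV) = \nu \Delta R.
\end{equation*}
Pairing with $R$ in $L^2$ yields $\tfrac{d}{dt}\|R\|^2 + 2\nu\|\nabla R\|^2 = 0$, since the transport term vanishes by horizontal incompressibility of $\bV$ and the stretching term vanishes by the pointwise identity $R^\perp \cdot R \equiv 0$. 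Poincar\'e's inequality then gives $\|R(t)\| \leq \|R(0)\|e^{-\nu \lambda_1 t}$. In other words, barotropic synchronization automatically forces exponential baroclinic synchronization, so the mixing problem reduces to coupling the two-dimensional stochastic Navier--Stokes equation satisfied by $\bV$.

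Accordingly, the argument has four steps. First, establish exponential $L^2$ moment bounds on $\bV$ and $H^1$ moment bounds on $\bv$ justifying the Lyapunov function $\Phi(v) = 1 + \|v\|^2 + \|\bv\|_1^2$ appearing in the target estimate, together with $L^p(\Omega; L^\infty_t L^2)$ bounds on $\tV$ needed for controlling the transient phase. Second, synchronize the barotropic components via a Foias--Prodi mechanism: under the assumption $\bP_N H \subset \mathrm{Range}(\bP G)$, apply Girsanov's theorem to drive $\bV^2$ by a shifted noise forcing $\bP_N \bV^1 = \bP_N \bV^2$. For $N$ large enough so that the spectral gap $\nu \lambda_{N+1}$ dominates the nonlinear energy transfer (this determines $N = N(\nu, \sigma)$), the determining-modes argument contracts the high-mode difference exponentially, and the adapted control has finite exponential moments so the Radon--Nikodym derivative is controlled.

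Third, couple the baroclinic noises on the two copies using $\tP_N H \subset \mathrm{Range}((I-\bP)G)$, and treat the baroclinic difference $R$ during the transient phase in which $\bV^1, \bV^2$ are close but not equal: the above cancellation identity becomes an identity perturbed by a source of size $\|\bV^1 - \bV^2\|_1 \|\tV^1\|_2 \|R\|$, and a Gr\"onwall argument combined with the moment bounds of Step 1 yields $\mathbb{E}\|R(t)\| \to 0$ at exponential rate. Fourth, assemble the contraction estimate
\begin{equation*}
\rho(P_t(v_1,\cdot), P_t(v_2,\cdot)) \leq C(\Phi(v_1) + \Phi(v_2)) e^{-ct},
\end{equation*}
from which Cauchy completeness of $(P_t \delta_{v_0})_t$ in the Wasserstein metric $\rho$ produces the unique invariant measure $\mu \in \Pscr(H^1)$; the one-point bound in the theorem follows by specializing $v_2 \sim \mu$.

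The main technical obstacle is handling the baroclinic equation during the barotropic synchronization window, when $\bV^1 - \bV^2$ is nonzero in high modes. The baroclinic equation is non-autonomous and has $\bV$-dependent coefficients, so any barotropic mismatch acts as a source in the $R$-equation that must be dominated by dissipation. This forces uniform higher-regularity bounds on $\tV$---delicate since $\tV$ lives on a three-dimensional domain yet is transported and stretched by a two-dimensional flow---and motivates the appearance of $\|\bv\|_1^2$ in $\Phi$. Balancing the high-mode barotropic decay, the baroclinic source term, and the Radon--Nikodym moment from Girsanov simultaneously requires careful anisotropic estimates exploiting the 2D nature of $\bV$.
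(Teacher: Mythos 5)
Your proposal takes the same general route as the paper: both set up a generalized coupling via Theorem 4.2 of \cite{butkovsky2020generalized} (restated as \cref{thm:butkovsky}), both use a Girsanov shift through a determining-modes feedback enabled by the range conditions, and both exploit the cancellations that make the baroclinic difference equation dissipate. The structural observation — that with a shared barotropic component and shared noise, $R=\widetilde{V}^1-\widetilde{V}^2$ satisfies a homogeneous equation with $\tfrac12\tfrac{d}{dt}\|R\|^2 = -\nu\|\nabla R\|^2$ — is correct and is implicit in the paper's estimate \eqref{e.121503}. However, the framing ``the mixing problem reduces to coupling the 2D stochastic Navier--Stokes equation satisfied by $\overline{V}$'' overstates the simplification, and Step 3 as stated hides a genuine difficulty.

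First, barotropic synchronization from Foias--Prodi is never exact: one only gets a pathwise contraction of the form $\|\overline{U}(t)\|^2\leq\|\overline{U}(0)\|^2\exp\bigl(-\nu\lambda_N t+\kappa\int_0^t S(V(s))\,ds\bigr)$ with a random exponent. Inserting this into a Gr\"onwall estimate for $R$ and then taking expectation, as your Step 3 proposes, requires exponential integrability of $\int_0^t(1+\|V\|_1^2+\|\overline{V}\|_2^2)\,ds$ \emph{paired with} the forcing moments of $\|\widetilde{V}\|_2$, and these exponential moments do not exist. The abstract theorem of \cite{butkovsky2020generalized} is used precisely to replace this naive ``Gr\"onwall plus moment bounds'' step: the Lyapunov drift condition (H2) controls the time average of $S$ with only a martingale remainder, which circumvents the need for exponential moments. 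Your plan cites the right framework up front but then describes Step 3 as if a direct expectation argument sufficed.

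Second, the baroclinic feedback control $-\tfrac{\nu\lambda_N}{2}\tP_N(\widetilde{V}-\widetilde{\mathcal{V}})$ in the controlled system \eqref{e.121501}, and hence the range condition $\tP_N H\subset\mathrm{Range}((I-\overline{P})G)$, is not optional. Without it, the only dissipation rate available for $R$ in the transient window (where $\overline{U}\neq 0$ and the cross terms $\langle\overline{U}\cdot\nabla_h\widetilde{V},R\rangle$ and $\langle\widetilde{V}^\perp(\nabla_h^\perp\cdot\overline{U}),R\rangle$ are active) is the fixed Poincar\'e rate $\nu\lambda_1$. The Harris-type condition $\zeta>\kappa b/\mu$ then fails for noise of general size, because $\kappa b/\mu$ scales with $\|\sigma\|_{L_2}^2$. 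The paper makes the rate tunable ($\zeta=\nu\lambda_N$) by feeding back in the baroclinic direction as well. Your Step 3 does mention using the baroclinic range condition ``to couple the baroclinic noises,'' but that phrasing suggests a synchronous coupling rather than a feedback shift; if you do mean the latter, you are doing essentially what the paper does and your opening claim that barotropic coupling alone suffices should be retracted.
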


To see that this implies exponential mixing in $H$, let us first introduce the set of bounded H\"older continuous functions on $H^k$, denoted by $C^{\gg}(H^k)$: given $\gg\in (0,1]$, $C^{\gg}(H^k)$ consists of all functions $\phi:H^k\to\mathbb{R}$ such that 
\begin{align*}
    \|\phi\|_{C^{\gg}(H^k)}: =\sup_{u\in H^k}|\phi(u)|+\sup_{\substack{u,v\in H^k\\ 0<\|u-v\|_k\leq 1}}\frac{|\phi(u)-\phi(v)|}{\|u-v\|^\gamma_k}<\infty.
\end{align*}
Note that $C^{\gg_1}(H^k)\subset C^{\gg_2}(H^k)$ if $\gg_1\geq \gg_2$, and when $\gg=1$, $C^{1}$ is the space of bounded Lipschitz continuous functions. Next, recall the distance function $d$ defined in \eqref{def:d}. Then for any $\gg\in (0,1]$, the function $d_{\gg}(u,v):=d(u,v)^{\gg}$ is again a distance. For $k=0$, by $[\phi]_{\gg}$ we denote the H\"older seminorm defined by 
    \[
        [\phi]_{\gg} = \sup_{u\neq v}\frac{|\phi(u)-\phi(v)|}{d_{\gg}(u,v)}.
    \]
One then obtains the following corollary of \cref{t.081201}:
\begin{corollary}\label{c:081201}
    For any $\gg\in (0,1]$, $\phi\in C^{\gg}(H)$ and $v_0\in H^1$, we have 
    \begin{align*}
        \left|P_{t}\phi(v_0)-\int_{H}\phi(v)\mu(dv)\right|\leq C\|\phi\|_{C^{\gg}(H)}(1+\|v_0\|^{2\gg}+\|\bv_0\|_1^{2\gg})e^{-c\gg t}.
    \end{align*}
\end{corollary}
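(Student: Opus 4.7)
The plan is to reduce Corollary \ref{c:081201} to \cref{t.081201} via Kantorovich--Rubinstein duality combined with a concavity/subadditivity argument to interpolate from the Lipschitz ($\gamma=1$) case to the H\"older case ($\gamma\in(0,1)$). Concretely, the first step is to introduce the Wasserstein metric $\rho_\gamma$ on $\Pscr(H)$ associated with the distance $d_\gamma(u,v):=d(u,v)^\gamma$, i.e.
\[
\rho_\gamma(\mu_1,\mu_2):=\inf_{\Pi\in\mathcal{C}(\mu_1,\mu_2)}\int_{H\times H} d(u,v)^\gamma\,\Pi(du\,dv),
\]
and to note the standard duality
\[
\rho_\gamma(\mu_1,\mu_2)=\sup\Big\{\textstyle\int\phi\,d\mu_1-\int\phi\,d\mu_2\,:\,[\phi]_\gamma\le 1\Big\}.
\]

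The second step is to observe that any $\phi\in C^\gamma(H)$ is Lipschitz with respect to $d_\gamma$. Indeed, if $\|u-v\|\le1$ then $d_\gamma(u,v)=\|u-v\|^\gamma$, and the H\"older seminorm of $\phi$ directly controls $|\phi(u)-\phi(v)|/d_\gamma(u,v)$; if $\|u-v\|>1$ then $d_\gamma(u,v)=1$ and the bound $|\phi(u)-\phi(v)|\le 2\sup|\phi|$ suffices. This gives $[\phi]_\gamma\le 2\|\phi\|_{C^\gamma(H)}$, and by duality
\[
\left|P_t\phi(v_0)-\int_{H}\phi\,d\mu\right|\le 2\|\phi\|_{C^\gamma(H)}\,\rho_\gamma\big(P_t(v_0,\cdot),\mu\big).
\]

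The third step is a Jensen-type interpolation: since $x\mapsto x^\gamma$ is concave on $[0,\infty)$ for $\gamma\in(0,1]$, Jensen's inequality applied under any coupling $\Pi$ yields
\[
\int d(u,v)^\gamma\,\Pi(du\,dv)\le\Big(\int d(u,v)\,\Pi(du\,dv)\Big)^\gamma,
\]
so taking the infimum over couplings gives $\rho_\gamma(\mu_1,\mu_2)\le \rho(\mu_1,\mu_2)^\gamma$. Invoking \cref{t.081201} and then the elementary subadditivity $(a+b+c)^\gamma\le a^\gamma+b^\gamma+c^\gamma$ valid for $\gamma\in(0,1]$, I obtain
\[
\rho_\gamma\big(P_t(v_0,\cdot),\mu\big)\le C^\gamma\big(1+\|v_0\|^{2}+\|\bv_0\|_1^{2}\big)^\gamma e^{-c\gamma t}\le C'\big(1+\|v_0\|^{2\gamma}+\|\bv_0\|_1^{2\gamma}\big)e^{-c\gamma t},
\]
and combining this with the duality bound from step two closes the estimate. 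There is no substantive obstacle here; the argument is essentially bookkeeping, with the only mild subtlety being the bounded/unbounded split in verifying $[\phi]_\gamma\lesssim\|\phi\|_{C^\gamma(H)}$ and the correct use of concavity to pass from the Wasserstein-$1$ bound to the Wasserstein-$\gamma$ bound.
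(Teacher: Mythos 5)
Your proposal is correct and follows essentially the same route as the paper: pass from $\rho$ to $\rho_\gamma$ via Jensen's inequality, then use Monge--Kantorovich duality and the H\"older seminorm $[\phi]_\gamma$ to obtain the observable bound. You spell out a couple of small points the paper glosses over (the bounded/unbounded split showing $[\phi]_\gamma\lesssim\|\phi\|_{C^\gamma(H)}$ and the subadditivity $(a+b+c)^\gamma\le a^\gamma+b^\gamma+c^\gamma$), but these are bookkeeping details, not a different argument.
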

\begin{proof}
Observe that $d_\gg$ induces a Wasserstein metric 
    \begin{align*}
        \rho_{\gg}(\mu_1,\mu_2):&=\inf_{\Pi\in \mathcal{C}(\mu_1,\mu_2)}\int_{H^1\times H^1}d_{\gg}(u,v)\Pi(dudv)\\
        &\leq \left(\inf_{\Pi\in \mathcal{C}(\mu_1,\mu_2)}\int_{H^1\times H^1}d(u,v)\Pi(dudv)\right)^{\gg} = \rho(\mu_1,\mu_2)^{\gg}
    \end{align*}
    for any $\mu_1,\mu_2\in \Pscr(H^1)$ by Jensen's inequality. Therefore, by \cref{t.081201}, we have 
    \begin{align*}
        \rho_{\gg}(P_t(v_0,\cdot),\mu)\leq \rho(P_t(v_0,\cdot),\mu)^{\gg}\leq C(1+\|v_0\|^{2\gg}+\|\bv_0\|_1^{2\gg})e^{-c\gg t}.
    \end{align*}
    On the other hand, by the Monge-Kantorovich
    duality \cite{villani2009optimal}, we have 
    \begin{align*}
        \left|P_{t}\phi(v_0)-\int_{H}\phi(v)\mu(dv)\right|&= \left|\int_{H}\phi(v) P_t(v_0,dv)-\int_{H}\phi(v)\mu(dv)\right|\\
        &\leq [\phi]_{\gg}\sup_{[\varphi]_{\gg}\leq 1}\left|\int_{H}\varphi(v) P_t(v_0,dv)-\int_{H}\varphi(v)\mu(dv)\right|\\
        &=[\phi]_{\gg}\rho_{\gg}(P_t(v_0,\cdot),\mu)\\
        &\leq C\|\phi\|_{C^{\gg}(H)}(1+\|v_0\|^{2\gg}+\|\bv_0\|_1^{2\gg})e^{-c\gg t}.
    \end{align*}
\end{proof}

In order to prove \cref{t.081201}, we will apply the abstract result established in \cite[Theorem 4.2]{butkovsky2020generalized}. For the sake of completeness, we recall this result now. 

\begin{theorem}[\hspace{-0.01pt}\cite{butkovsky2020generalized}]\label{thm:butkovsky}
Let $(E,\rho)$ be a Polish space. For $x\in E$, let $\mathrm{P}_x$ denote the law of the Markov process $X=\{X(t)\}_{t\geq0}$ with $X(0)=x$ and corresponding Markov kernel $\{P_t\}_{t\geq0}$. Suppose that $\{P_t\}_{t\geq0}$ is Feller and assume that there exists a lower semicontinuous function $U:E\to[0,\infty)$, a measurable function $S:E\to[0,\infty]$, and a premetric $q$ on $E$ such that for any given $x,y\in E$, there exists a couple of progressively measurable random processes $X^{x,y}=\{X^{x,y}(t)\}_{t\geq0}$, $Y^{x,y}=\{Y^{x,y}(t)\}_{t\geq0}$ such that the Markov kernel satisfies the following conditions:
    \begin{enumerate}
        \item[(H1)] There exists $\zeta>0$ and $\kappa\geq0$ such that
            \begin{align}\notag
                q(X^{x,y}(t),Y^{x,y}(t))\leq q(x,y)\exp\left(-\zeta t+\kappa\int_0^tS(X^{x,y}(s))ds\right),\quad 
            \end{align}
        for all $t\geq0$.
        \item[(H2)] There exist $\mu>0$ and $b\geq0$ satisfying
            \begin{align}\notag
                \zeta>\kappa\frac{b}{\mu},
            \end{align}
        such that
            \begin{align}\notag
                U(X^{x,y}(t))+\mu\int_0^tS(X^{x,y}(s))ds\leq U(X^{x,y}(0))+bt+M(t),
            \end{align}
        for all $t\geq0$, where $M$ is a continuous local martingale with $M(0)=0$ and quadratic variation satisfying
            \begin{align}\notag
                d[M](t)\leq b_1S(X^{x,y}(t))dt+b_2dt,
            \end{align}
        for some $b_1,b_2\geq0$, for all $t\geq0$.
        \item[(H3)] $\mathrm{P}_x=\mathrm{Law}(X^{x,y})$ and for every $\delta\in(0,1]$, there exists a constant $C_\delta>0$ such that
            \begin{align}\notag
                d_{TV}\left(\mathrm{Law}(Y^{x,y}(t),P_t(y,\cdot)\right)\leq C_\delta\left(\mathbb{E}\left(\int_0^tq(X^{x,y}(s),Y^{x,y}(s))ds\right)^\delta\right)^{1/2},
            \end{align}
        for all $t\geq0$. Additionally, for any $\delta\in(0,1]$, $R>0$, there exists $\varepsilon=\varepsilon(R,\delta)>0$ such that 
            \begin{align}\notag
                \mathbb{E}\left(\int_0^tq(X^{x,y}(s),Y^{x,y}(s))ds\right)^\delta\quad\text{implies}\quad d_{TV}\left(\mathrm{Law}(Y^{x,y}(t),P_t(y,\cdot)\right)\leq 1-\varepsilon.
            \end{align}
    \end{enumerate}
    If there exists a measurable function $F:E\to[0,\infty)$ and constants $\gamma, K>0$ such that
        \begin{align}\notag
            \mathbb{E}F(X(t))\leq V(x)-\gamma \mathbb{E}\int_0^tF(X(s))ds+Kt,
        \end{align}
    for all $t\geq0$, $x\in E$, and $U(\cdot)$ and $q(\cdot,\cdot)$ are bounded on the level sets $\{F\leq M\}$ and $\{F\leq M\}\times\{F\leq M\}$, respectively, and $\rho\leq q^\delta$, for some $\delta>0$, then $\{P_t\}_{t\geq0}$ has a unique invariant measure, $\mu$. Moreover, there exist constants $C,r>0$ such that
        \begin{align}\notag
            W_{\rho\wedge1}\left(P_t(x,\cdot),\mu\right)\leq C(1+F(x))e^{-rt},
        \end{align}
    for all $t\geq0$, $x\in E$.
\end{theorem}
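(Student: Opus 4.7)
The plan is to combine the generalized coupling $(X^{x,y}, Y^{x,y})$ supplied by hypotheses (H1)--(H3) with a Harris-type weighted Kantorovich semimetric, using the Lyapunov function $F$ to control excursions. In broad strokes, (H1) together with (H2) will give a pathwise exponential contraction in the semimetric $q$, (H3) will transfer this contraction to the actual kernels $P_t(x,\cdot)$ and $P_t(y,\cdot)$ via a total-variation bridge, and the Lyapunov drift for $F$ will combine with this local contraction to produce a global geometric ergodicity statement in a weighted semimetric.

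The heart of the proof, and the step I expect to be the main obstacle, is an exponential moment bound for the coupled process. Starting from (H1) and substituting the control on $\int_0^t S(X^{x,y}(s))\,ds$ from (H2),
\begin{equation*}
\int_0^t S(X^{x,y}(s))\,ds \leq \tfrac{1}{\mu}\bigl(U(x) + bt + M(t)\bigr),
\end{equation*}
one obtains the pathwise estimate
\begin{equation*}
q(X^{x,y}(t), Y^{x,y}(t)) \leq q(x,y) \exp\!\left(-\bigl(\zeta - \tfrac{\kappa b}{\mu}\bigr)t + \tfrac{\kappa}{\mu}U(x) + \tfrac{\kappa}{\mu}M(t)\right).
\end{equation*}
Taking expectations to a small power $p \in (0,1]$, one must control $\mathbb{E}\exp\bigl(\tfrac{p\kappa}{\mu}M(t)\bigr)$ for a continuous local martingale $M$ whose quadratic variation satisfies $d[M](t) \leq b_1 S(X^{x,y}(t))\,dt + b_2\,dt$. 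Using the exponential supermartingale $\exp\bigl(\lambda M(t) - \tfrac{\lambda^2}{2}[M](t)\bigr)$ with $\lambda = p\kappa/\mu$ reintroduces $\int_0^t S\,ds$ into the exponent via $[M]$, and $p$ must be chosen small enough that the induced coefficient of $\int_0^t S\,ds$ is strictly less than the margin $\zeta - \kappa b/\mu > 0$. This is a delicate parameter balance—precisely the role of the inequality $\zeta > \kappa b/\mu$—and will be handled by Young's inequality, or equivalently an iterated Girsanov-type change of measure. The output is, for $p$ sufficiently small and some $\beta, c > 0$,
\begin{equation*}
\mathbb{E}\bigl[q(X^{x,y}(t), Y^{x,y}(t))^p\bigr] \leq C\, q(x,y)^p\, e^{-\beta t}\, e^{c U(x)}.
\end{equation*}

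Next, one transfers this pathwise estimate to the Markov kernels. By (H3), $X^{x,y}$ has law $\mathrm{P}_x$, so $P_t(x,\cdot) = \mathrm{Law}(X^{x,y}(t))$. Step 2 yields exponential decay of $\mathbb{E}\int_0^t q(X^{x,y}(s), Y^{x,y}(s))\,ds$, so the first clause of (H3) bounds $d_{TV}(\mathrm{Law}(Y^{x,y}(t)), P_t(y,\cdot))$ by a power of $e^{-\beta t}$; when $(x,y)$ begins at large $q$-distance, the second clause of (H3) still furnishes a uniform defect in total variation over level sets of $F$. Applying the triangle inequality for Wasserstein distance, combined with the hypothesis $\rho \leq q^\delta$, upgrades the $q$-contraction to a $W_{\rho \wedge 1}$-contraction: for any $x,y$ in a fixed level set $\{F \leq M\}$ and $t$ sufficiently large,
\begin{equation*}
W_{\rho \wedge 1}(P_t(x,\cdot), P_t(y,\cdot)) \leq C(M)\, e^{-\beta' t}.
\end{equation*}
Meanwhile, the Lyapunov hypothesis on $F$ together with Gronwall gives $\mathbb{E}F(X(t)) \leq e^{-\gamma t}F(x) + K/\gamma$, so the process returns exponentially to such level sets.

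Finally, one synthesizes local contraction and Lyapunov return into global geometric ergodicity. Following the Hairer--Mattingly weighted-semimetric method, introduce
\begin{equation*}
\tilde\rho(x,y) := (\rho(x,y) \wedge 1)\bigl(1 + \eta F(x) + \eta F(y)\bigr)
\end{equation*}
for $\eta > 0$ small. The level-set contraction controls $\tilde\rho$ when $x,y$ are close in $F$, while for distant $x,y$ the $F$-weighting dominates and the Lyapunov drift yields decay. Optimizing over $\eta$ and a fixed time step $t_*$ produces the strict contraction $W_{\tilde\rho}(P_{t_*}(x,\cdot), P_{t_*}(y,\cdot)) \leq \theta\, \tilde\rho(x,y)$ for some $\theta < 1$. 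Existence of an invariant measure $\mu$ follows from Krylov--Bogoliubov applied to the Lyapunov bound, and uniqueness plus the quantitative rate $W_{\rho \wedge 1}(P_t(x,\cdot),\mu) \leq C(1+F(x))e^{-rt}$ follow by iterating the contraction, integrating against $\mu$, and using $\rho \wedge 1 \leq \tilde\rho$. The main technical burden lies in Step 2, where the joint appearance of $S$ in both the $q$-contraction exponent and the quadratic variation of $M$ forces the careful parameter selection enabled by $\zeta > \kappa b/\mu$.
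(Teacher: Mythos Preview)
The paper does not prove this theorem. It is quoted verbatim as \cite[Theorem 4.2]{butkovsky2020generalized} and used as a black-box tool in the proof of \cref{t.081201}; no argument for it appears anywhere in the text. Consequently there is no ``paper's own proof'' to compare your proposal against.

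For what it is worth, your sketch is broadly faithful to the strategy of the original reference: the exponential-moment balancing of (H1) against (H2) via the strict inequality $\zeta>\kappa b/\mu$, the total-variation bridge from (H3), and the Hairer--Mattingly weighted-semimetric synthesis are precisely the ingredients in \cite{butkovsky2020generalized}. But since the present paper merely invokes the result, your task here should simply be to cite it rather than reprove it.
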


Let us now prove \cref{t.081201}.

\begin{proof}[Proof of \cref{t.081201}]
    Let $V=(\bV,\tV)$ be the solution to \eqref{e.081201} with initial condition $V_0\in H^1$. Let  $\Vc=(\bVc,\tVc)$ denote the solution to the following controlled stochastic system with initial condition $\Vc_0\in H^1$:
    \begin{align}\label{e.121501}
        \begin{split}
            &\partial_t\bVc+\Pc_h(\bVc\cdot\nabla_h  \bVc) = \nu\DD_h \bVc - \frac{\nu \ll_N}{2}\bP_N\left(\bV-\bVc \right)+ {\bP}G\partial_tW,\\
            &\partial_t\tVc +\bVc\cdot\nabla_h \tVc+\frac12\tVc^{\perp}\left(\nabla_h ^{\perp}\cdot\bVc\right)=\nu\Delta \tVc- \frac{\nu \ll_N}{2}\tP_N\left(\tV-\tVc \right) + (I-\bP)G\partial_tW.
        \end{split} 
    \end{align}
    We proceed to verify the assumptions of \cref{thm:butkovsky}. Let $(E,\rho)=(H,d)$. Also, let 
        \begin{align}\label{def:USq}
            U(v) = \|v\|^2+\|\bv\|_1^2, \quad
            S(v) = 1+\|v\|_1^2+\|\bv\|_{2}^2,\quad q(u,v)=\|u-v\|^2+\|\bu-\bv\|_1^2.
        \end{align}

\subsubsection*{Verifying Assumption $(\mathrm{H1})$.} We claim that there exists $C>0$ such that 
    \begin{align}\label{e.121506}
        \begin{split}
            &\|V(t)-\Vc(t)\|^2+\|\bV(t)-\bVc(t)\|_1^2\\
            &\quad \leq \left(\|V_0-\Vc_0\|^2+\|\bV_0-\bVc_0\|_1^2\right)\exp\left(-\nu \ll_Nt+C\int_0^t\left(1+\|V(s)\|_1^2+\|\bV(s)\|_{2}^2\right)ds\right)
        \end{split}
    \end{align}
    for all $t\geq 0$. This would verify Assumption $(\mathrm{H1})$ with  $\zeta=\nu \ll_N$, $\kappa=C$, and $U$, $S$ given by \eqref{def:USq}.

    Let $\bU=\bV-\bVc$ and $\tU=\tV-\tVc$. From \eqref{e.081201} and  \eqref{e.121501}, we see 
    \begin{align*}
        &\partial_t\bU+\Pc_h(\bV\cdot\nabla_h  \bU)+ \Pc_h(\bU\cdot\nabla_h  \bVc)=\nu\Delta_h \bU-\frac{\nu \ll_N}{2}\bP_N\bU,  \\
        & \partial_t\tU+\bV\cdot\nabla_h  \tU + \bU\cdot\nabla_h  \tVc + \frac12 \tVc^{\perp}(\nabla_h ^{\perp}\cdot \bU) + \frac12 \tU^{\perp}(\nabla_h ^{\perp}\cdot\bVc) = \nu\DD \tU-\frac{\nu \ll_N}{2}\tP_N\tU.
    \end{align*}
    The estimate for $\bU$ is the same as that for the Navier-Stokes equation in \cite{butkovsky2020generalized}. One has 
    \begin{align}\label{e.121502}
        \frac{d}{dt}\|\bU\|^2\leq  - \nu\|\nabla_h\bU\|^2- \nu \ll_N\|\bP_N\bU\|^2+\frac{C}{\nu}\|\bV\|_1^2\|\bU\|^2. 
    \end{align}
    In addition, we have 
    \begin{align*}
        \frac12\frac{d}{dt}\|\bU\|_1^2 = -\nu\|\DD_h\bU\|^2-\frac{\nu \ll_N}{2} \|\bP_N\nabla_h\bU\|^2 - \left\langle \Pc_h(\bV\cdot\nabla_h  \bU)+ \Pc_h(\bU\cdot\nabla_h  \bVc), \DD_h\bU\right\rangle.
    \end{align*}
    Since 
    \begin{align*}
        \left\langle \Pc_h(\bV\cdot\nabla_h  \bU), \DD_h\bU\right\rangle\leq C\|\bV\|_2\|\nabla_h\bU\|\|\DD_h\bU\|\leq \frac{\nu}{4}\|\DD_h\bU\|^2 + \frac{C}{\nu}\|\bV\|_2^2\|\nabla_h\bU\|^2,
    \end{align*}
    and due to $\langle \bU\cdot\nabla_h\bU,\DD_h\bU\rangle=0$ on $\mathbb T^2$, one has 
    \begin{align*}
        \left\langle  \Pc_h(\bU\cdot\nabla_h  \bVc), \DD_h\bU\right\rangle &= \left\langle  \Pc_h(\bU\cdot\nabla_h  \bV), \DD_h\bU\right\rangle\\
        &\leq C\|\nabla\bU\|\|\bV\|_2\|\DD_h\bU\|\leq \frac{\nu}{4}\|\DD_h\bU\|^2 + \frac{C}{\nu}\|\bV\|_2^2\|\nabla_h\bU\|^2.
    \end{align*}
    Therefore, we obtain
    \begin{align}\label{e.121601}
        \frac{d}{dt}\|\bU\|_1^2\leq \nu\|\DD_h\bU\|^2-\nu \ll_N \|\bP_N\nabla_h\bU\|^2 + \frac{C}{\nu}\|\bV\|_2^2\|\bU\|_1^2. 
    \end{align}
    By cancellation of the nonlinear terms, we obtain from the equation of $\tU$ that 
    \begin{align}\label{e.121503}
        \frac12\frac{d}{dt}\|\tU\|^2+\nu\|\nabla\tU\|^2 = -\langle\bU\cdot\nabla_h  \tVc, \tU\rangle - \frac12\langle \tVc^{\perp}(\nabla_h ^{\perp}\cdot \bU), \tU\rangle-\frac{\nu \ll_N}{2}\|\tP_N\tU\|^2.
    \end{align}
    Since $\bU$ is divergence free over $\mathbb T^2$ and $\tU=\tV-\tVc$, one obtains from applying H\"older's inequality, Ladyzhenskaya's inequality, and Young's inequality that
    \begin{align}\label{e.121504}
        \begin{split}
            \langle\bU\cdot\nabla_h  \tVc, \tU\rangle &=\int_{\mathbb T^1} \int_{\mathbb T^2} (\bU\cdot\nabla_h  \tV) \cdot \tU  dxdy dz \\
            &\leq \int_{\mathbb T^1}\|\bU\|_{L^4(\mathbb T^2)}\|\nabla_h  \tV\|_{L^2(\mathbb T^2)}\|\tU\|_{L^4(\mathbb T^2)} dz\\
            &\lesssim \Big(\|\bU\|\|\nabla_h \bU\|\Big)^{\frac12}\|\nabla  \tV\|\left(\int_{\mathbb T^1}\|\tU\|_{L^4(\mathbb T^2)}^2 dz\right)^{\frac12}\\
            &\lesssim \Big(\|\bU\|\|\nabla_h \bU\|\Big)^{\frac12}\|\nabla  \tV\|\left(\int_{\mathbb T^1}\|\tU\|_{L^2(\mathbb T^2)}\|\nabla_h \tU\|_{L^2(\mathbb T^2)} dz\right)^{\frac12}\\
            &\leq \Big(\|\bU\|\|\nabla_h \bU\|\Big)^{\frac12}\|\nabla  \tV\|\left(\|\tU\|\|\nabla \tU\|\right)^{\frac12}\\
            &\leq \frac{C}{\nu}\|\nabla  \tV\|^2\left(\|\bU\|^2 + \|\tU\|^2\right) + \frac{\nu}{4} \left(\|\nabla_h \bU\|^2  + \|\nabla \tU\|^2\right).
        \end{split}
    \end{align}
    Similarly, we have 
    \begin{align}\label{e.121505}
        \begin{split}
            \langle \tVc^{\perp}(\nabla_h ^{\perp}\cdot \bU), \tU\rangle& = \langle \tV^{\perp}(\nabla_h ^{\perp}\cdot \bU), \tU\rangle\\
            &\leq \|\nabla_h ^{\perp}\cdot \bU\|\left(\int_{\mathbb T^1}\|\tV\|_{L^4(\mathbb T^2)}^2dz\right)^{\frac12}\left(\int_{\mathbb T^1}\|\tU\|_{L^4(\mathbb T^2)}^2dz\right)^{\frac12}\\
            &\lesssim \|\nabla_h  \bU\|\left(\int_{\mathbb T^1}\|\tV\|_{L^2(\mathbb T^2)}\|\nabla_h  \tV\|_{L^2(\mathbb T^2)}dz\right)^{\frac12}\left(\int_{\mathbb T^1}\|\tU\|_{L^2(\mathbb T^2)}\|\nabla_h \tU\|_{L^2(\mathbb T^2)}dz\right)^{\frac12}\\
            &\leq \|\nabla_h  \bU\|\left(\|\tV\|\|\nabla  \tV\|\|\tU\|\|\nabla \tU\|\right)^{\frac12}\\
            &\leq C\|\tV\|\|\bU\|_1^2+ \frac{C}{\nu}\|\nabla \tV\|^2\|\tU\|^2+\frac{\nu}{4}\|\nabla \tU\|^2.
        \end{split}
    \end{align}
    Observe that by Poincar\'e's inequality we have
    \begin{align*}
        &\nu \ll_N\|\bV\|^2\leq \nu \ll_N\|\bP_N\bV\|^2+\nu\|\nabla_h \bV\|^2\quad\text{and}\quad\nu \ll_N\|\tV\|^2\leq \nu \ll_N\|\tP_N\tV\|^2+\nu\|\nabla \tV\|^2.
    \end{align*}
    Combining this fact with \eqref{e.121502}--\eqref{e.121505} we obtain 
    \begin{align*}
        &\frac{d}{dt}\left(\|\bU\|^2+\|\tU\|^2+\|\bU\|_1^2\right)\notag
        \\
        &\leq \left(-\nu \ll_N+C
        (1+\|V\|_1^2+\|\bV\|_{2}^2)\right)\left(\|\bU\|^2+\|\tU\|^2+\|\bU\|_1^2\right).
    \end{align*}
    The desired estimate \eqref{e.121506} then follows from Gr\"onwall's inequality. 

    \subsubsection*{Verifying Assumption $(\mathrm{H2})$.} 
    Observe that by It\^o's formula and the equation for $\bV$, we have 
    \begin{align*}
        \|\bV(t)\|_1^2&=\|\bV(0)\|_1^2\notag
        \\
        &\quad+ 2\int_0^t\left\langle \nu\DD_h\bV-\Pc\left(\bV\cdot\nabla_h\bV\right), \DD_h\bV\right\rangle ds+ 2\int_0^t\left\langle \DD_h\bV(s), {\bP}GdW(s)\right\rangle+\|{\bP}G\|_{L_2(H,H^1)}^2t.
    \end{align*}
    Since $\left\langle \Pc\left(\bV\cdot\nabla_h\bV\right), \DD_h\bV\right\rangle =0$, we may combine this energy estimate with \eqref{e.121507} to obtain 
    \begin{align}\label{e.121603}
        \|V(t)\|^2+\|\bV(t)\|_1^2&+2\nu\int_0^t\left(1+\|V(s)\|_1^2+\|\bV(s)\|_2^2\right)ds\leq \|V_0\|^2+\|\bV_0\|_1^2+\mathcal{G}^2 t + M(t),
    \end{align}
    where $\mathcal{G}^2 = 2\nu+\|{\bP}G\|_{L_2(H,H^1)}^2 + \|G\|_{L_2(H,H)}^2$, and 
        \[
            M(t) = 2\int_0^t\left\langle \DD_h\bV(s), {\bP}GdW(s)\right\rangle + 2\int_0^t\left\langle V(s), GdW(s)\right\rangle,
        \]
    is the martingale term whose quadratic variation satisfies
    \begin{align*}
        [M](t)\leq 2\|G\|_{L_2(H,H)}^2\int_0^t\left(\|\bV(s)\|_2^2+\|V(s)\|^2\right)ds.
    \end{align*}
    Therefore, Assumption $(\mathrm{H2})$ is verified with $\mu=2\nu$, $b = \mathcal{G}^2$, $b_1=2\|G\|_{L_2(H,H)}^2$ and $b_2=0$. Recall from Assumption $(H1)$ that $\zeta=\nu \ll_N$ and $\kappa=C$. To ensure $\zeta>\kappa\frac{b}{\mu}$, we choose $N$ sufficiently large such that 
    \begin{align*}
        \nu\ll_N>C\frac{\mathcal{G}^2}{2\nu}.
    \end{align*}
    This verifies that $(H2)$ holds with $U,S$ given by \eqref{def:USq}.

\subsubsection*{Verifying Assumption $(\mathrm{H3})$.} We verify this assumption by using \cite[Lemma 4.1]{butkovsky2020generalized}. Let 
   \begin{align*}
      &d\overline{W}(t): = dW(t)-\frac{\nu \ll_N}{2}({\bP}G)^{-1}\bP_N(\bV-\bVc)dt,
      \\
      &d\widetilde{W}(t): = dW(t)-\frac{\nu \ll_N}{2}((I-\bP)G)^{-1}\tP_N(\tV-\tVc)dt,
   \end{align*}
   where the pseudo-inverses $({\bP}G)^{-1}$ and $((I-\bP)G)^{-1}$ are well-defined and bounded by the assumption \eqref{e.121602}. Therefore
   \begin{align*}
    \frac{\nu \ll_N}{2}\left(\|({\bP}G)^{-1}\bP_N(\bV-\bVc)\|+\|((I-\bP)G)^{-1}\tP_N(\tV-\tVc)\|\right)\leq C\|V-\Vc\|,
   \end{align*}
   which verifies Assumption $(\mathrm{H3})$.

   It remains to find the desired Lyapunov function $F$. By choosing $F(V) = \|V\|^2+\|\bV\|_1^2$, we infer from \eqref{e.121603} and Poincar\'e's inequality that 
   \begin{align*}
    \Eb F(V_t)\leq F(V_0)-2\nu C\Eb\int_0^tF(V_r)dr+\mathcal{G}^{2}t,
   \end{align*}
   verifying the inequality required in Theorem \ref{thm:butkovsky}. On the other hand, one readily sees that the functions $U$ and $q$ from \eqref{def:USq} are bounded on the level sets $\left\{F\leq M\right\}$ and $\left\{F\leq M\right\}\times \left\{F\leq M\right\}$, for any $M>0$. Lastly, one has $\|u-v\|\leq q(u,v)^{\frac12}$. We therefore deduce from \cref{thm:butkovsky} that the system \eqref{e.081201} has a unique invariant measure and that a spectral gap exists.
\end{proof}

The next step is to bootstrap convergence to $H^1$ by exploiting the availability of higher-order moments afforded by parabolic regularization effects. To this end, we establish two key lemmas. For the first lemma, we establish H\"older continuity of the Markov kernel $\{P_t\}_{t\geq0}$ (\cref{l.120301}). For the second lemma, we make use of the first lemma to assert the bootstrap step (\cref{l.121701}).

\begin{lemma}\label{l.120301}
    Let $R>0$ and $v_1,v_2\in B_R(H)$, the ball in $H$ with radius $R$ centered at the origin. Then for any $\gg\in(0,1]$, there is $\dd<\gg$ such that for any $\phi\in C^{\gamma}(H^1)$, $t>0$, 
    \[|P_t\phi(v_1)-P_t\phi(v_2)|\leq C(t)\|\phi\|_{C^{\gg}(H^1)}\exp(\eta_0 R^2)\|v_1-v_2\|^{\dd}\]
    where $\eta_0$ is the constant from \cref{l.120101} and $C(t)$ depends on $\|v_1\|, \|v_2\|$ and is bounded on any interval $[t_1,t_2]\subset(0,\infty)$. 
\end{lemma}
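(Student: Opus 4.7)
The plan is to exploit the instantaneous parabolic smoothing of \eqref{e.081201} to upgrade closeness of initial data in $H$ into closeness of solutions in $H^1$ at positive times, and then close the argument via the exponential-moment bound of \cref{l.120101}. Let $V_j = (\bV_j, \tV_j)$ denote the solution of \eqref{e.081201} with initial condition $v_j$ driven by a common Wiener process $W$ ($j = 1,2$), and set $U := V_1 - V_2$. Since the noise cancels upon subtraction, $U$ solves a noiseless random-coefficient PDE of the same structure as the one analyzed in the proof of $(\mathrm{H1})$ of \cref{t.081201} (with $\nu \ll_N = 0$). Repeating the $L^2$ energy computation leading to \eqref{e.121506} yields
\begin{align*}
    \|U(t)\|^2 + \nu \int_0^t \|U(s)\|_1^2\, ds \leq \|v_1 - v_2\|^2 \exp\left(C \int_0^t S(V(s))\, ds\right),
\end{align*}
where $S(v) := 1 + \|v\|_1^2 + \|\bv\|_2^2$ is precisely the Lyapunov-type quantity whose exponential moments are supplied by \cref{l.120101}.

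Next, I would derive a first-order companion of this estimate, an $H^1$-energy inequality of the form
\begin{align*}
    \frac{d}{dt}\|U\|_1^2 + \nu\|U\|_2^2 \leq C\, \widehat{S}(V)\, \|U\|_1^2,
\end{align*}
for a suitable Lyapunov quantity $\widehat{S}(V)$ controlled by norms of $V_1, V_2$ of the form covered by \cref{l.120101}. Since $\|v_1 - v_2\|_1$ need not be finite, the parabolic gain is extracted by working with $t\|U(t)\|_1^2$ via
\begin{align*}
    \frac{d}{dt}\bigl(t\|U\|_1^2\bigr) \leq \|U\|_1^2 + t\, C\, \widehat{S}(V)\, \|U\|_1^2,
\end{align*}
integrating from $0$ to $t$, and using the $L^2$-bound above to absorb the $\int_0^t \|U(s)\|_1^2\, ds$ term. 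This yields the pathwise estimate
\begin{align*}
    t\, \|U(t)\|_1^2 \leq \frac{C}{\nu}\, \|v_1 - v_2\|^2 \exp\left(C \int_0^t \widehat{S}(V(s))\, ds\right).
\end{align*}

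To translate this into the advertised H\"older bound, I would use the elementary inequality $\min(1, x^{\gg}) \leq x^{\dd}$, valid for all $x \geq 0$ and $0 < \dd \leq \gg \leq 1$, together with boundedness and H\"older continuity of $\phi$ on $H^1$, so that
\begin{align*}
    |P_t \phi(v_1) - P_t \phi(v_2)| \leq 2\|\phi\|_{C^{\gg}(H^1)}\, \Eb\, \min\bigl(1, \|U(t)\|_1^{\gg}\bigr) \leq 2\|\phi\|_{C^{\gg}(H^1)}\, \Eb\|U(t)\|_1^{\dd}.
\end{align*}
Raising the pathwise estimate to the $\dd/2$-power and taking expectations gives
\begin{align*}
    \Eb\|U(t)\|_1^{\dd} \leq C(t)\, \|v_1 - v_2\|^{\dd}\, \Eb \exp\left(\tfrac{C\dd}{2}\int_0^t \widehat{S}(V(s))\, ds\right).
\end{align*}
The point of allowing $\dd$ strictly smaller than $\gg$ is precisely to force the coefficient $C\dd/2$ into the admissible range in which \cref{l.120101} yields $\leq C(t)\exp(\eta_0 R^2)$, using $\|v_j\| \leq R$.

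The main obstacle I anticipate is producing the $H^1$-difference inequality with a Lyapunov factor $\widehat{S}(V)$ that remains within the reach of \cref{l.120101}. The difficulty specific to the PE system is the loss of one horizontal derivative in $w(\tV) = -\int_0^z \nabla_h \cdot \tV\, d\zeta$: a naive application of H\"older--Sobolev inequalities would saddle $\widehat S$ with $\|\tV\|_2^2$-type terms whose exponential moments are typically unavailable. As in \eqref{e.121504}--\eqref{e.121505}, this must be circumvented by anisotropic estimates that exploit the two-dimensional character of the barotropic flow, keeping $\widehat S$ structurally comparable to $S$.
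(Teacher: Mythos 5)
Your proposal diverges from the paper at the step you yourself flag as the main obstacle, and the obstacle is fatal to your plan. First, a minor point: the vertical-advection term $w(\tV)\partial_z\tV$ is not present in the limit resonant system \eqref{e.081201} at all, so the specific difficulty you cite (loss of a horizontal derivative through $w$) is not the issue here. The real problem is that an $H^1$ energy estimate for the difference $U = V_1 - V_2$ inevitably produces a Gr\"onwall factor $\widehat S(V)$ that contains $\|\bV\|_2^2$ (from $\langle \bV\cdot\nabla_h \bU,\Delta_h\bU\rangle$ and $\langle \bU\cdot\nabla_h\bV,\Delta_h\bU\rangle$ in 2D) and $\|\tV\|_2^2$-type quantities (from $\langle\bU\cdot\nabla_h\tV_2,\Delta\tU\rangle$), and no amount of anisotropic massaging reduces these to $\|V\|_1^2$. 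But \cref{l.120101} only supplies \emph{exponential} moments of $\int_0^t\|\nabla V\|^2\,ds$; for $\int_0^t\|V\|_2^2\,ds$ it gives only polynomial moments (and weighted by $t^4$ at that). Since the whole point of shrinking $\dd$ is to squeeze the coefficient into the range of the exponential-moment bound, your scheme cannot close: no choice of $\dd$ makes $\Eb\exp\bigl(\tfrac{C\dd}{2}\int_0^t\widehat S\bigr)$ finite when $\widehat S$ contains $\|V\|_2^2$-terms.

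The paper avoids deriving an $H^1$ difference estimate altogether. It proves only the $L^2$ difference estimate
\begin{align*}
    \|U(t)\|^2\leq \|v_1-v_2\|^2\exp\left(C\int_0^t\|\nabla V_1\|^2+\|\nabla V_2\|^2\,ds\right)\left(1+\sup_{s\le t}\|\tV_1(s)\|\right),
\end{align*}
whose Gr\"onwall factor involves only $H^1$ norms, hence is reachable by the exponential-moment bound. It then obtains $H^1$ control of the difference not by an $H^1$ energy estimate on $U$, but by interpolating $\|U(t)\|_1\lesssim\|U(t)\|^{1/2}\|U(t)\|_2^{1/2}$ and bounding $\|U(t)\|_2\leq\|V_1(t)\|_2+\|V_2(t)\|_2$ by the $H^2$ norms of the \emph{individual} solutions, for which \cref{l.120101}(2) supplies the needed (polynomial, $t$-weighted) moments. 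The H\"older split in the expectation then only ever calls on exponential moments of $\int\|\nabla V\|^2$ and low polynomial moments of $\|V(t)\|_2$ and $\sup_s\|V(s)\|$, all of which are available for $\gg_0$ small. This interpolation trick is precisely what replaces your attempted $H^1$ difference estimate and is the missing idea in your proposal.
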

\begin{proof}
    Let $V_i:=(\bV_i,\tV_i)$, for $i=1,2$, be two solutions of equation \eqref{e.081201} and $\overline{U} = \bV_1-\bV_2, \widetilde{U}=\tV_1-\tV_2$ be their difference. Then $\overline{U}$ satisfies the equation 
    \begin{align*}
        \partial_t\overline{U}+\Pc_h(\bV_1\cdot\nabla_h  \overline{U})+ \Pc_h(\overline{U}\cdot\nabla_h  \bV_2)=\nu\Delta \overline{U},  
    \end{align*}
    from which we deduce 
    \begin{align}\label{e.120101}
        \frac12\frac{d}{dt}\|\overline{U}\|^2 + \nu\|\nabla_h \overline{U}\|^2 = -\langle\overline{U}\cdot\nabla_h  \bV_2, \overline{U}\rangle.
    \end{align}
    Combining this with the estimate
    \begin{align}\label{e.120103}
        \langle\overline{U}\cdot\nabla_h  \bV_2, \overline{U}\rangle\leq C\|\nabla_h  \bV_2\|^2\|\overline{U}\|^2+\frac{\nu}{4}\|\nabla_h  \overline{U}\|^2
    \end{align} 
    and Gr\"onwall's inequality we obtain 
    \begin{align}\label{e.120104}
        \|\overline{U}(t)\|^2+\int_0^t\|\nabla_h  \overline{U}(s)\|^2ds\leq \|\overline{U}(0)\|^2\exp\left(C\int_0^t\|\nabla_h  \bV_2(s)\|^2ds\right).
    \end{align}

    Note that $\widetilde{U}$ satisfies 
    \begin{align*}
        \partial_t\widetilde{U}+\bV_1\cdot\nabla_h  \widetilde{U} + \overline{U}\cdot\nabla_h  \tV_2 + \frac12 \tV_1^{\perp}(\nabla_h ^{\perp}\cdot \overline{U}) + \frac12 \widetilde{U}^{\perp}(\nabla_h ^{\perp}\cdot\bV_2) = \nu\DD \widetilde{U}.
    \end{align*}
    Therefore, one has 
    \begin{align}\label{e.120102}
        \frac12\frac{d}{dt}\|\widetilde{U}\|^2 + \nu\|\nabla \widetilde{U}\|^2 = -\langle\overline{U}\cdot\nabla_h  \tV_2, \widetilde{U}\rangle - \frac12\langle \tV_{1}^{\perp}(\nabla_h ^{\perp}\cdot \overline{U}), \widetilde{U}\rangle.
    \end{align}
    The same argument as in estimating \eqref{e.121504} and \eqref{e.121505} yields
    \begin{align*}
        \langle\overline{U}\cdot\nabla_h  \tV_2, \widetilde{U}\rangle \leq C\|\nabla  \tV_2\|^2\left(\|\overline{U}\|^2 + \|\widetilde{U}\|^2\right) + \frac{\nu}{4} \left(\|\nabla_h \overline{U}\|^2  + \|\nabla \widetilde{U}\|^2\right),
    \end{align*}
    and 
    \begin{align*}
        \langle \tV_{1}^{\perp}(\nabla_h ^{\perp}\cdot \overline{U}), \widetilde{U}\rangle\leq  \|\nabla_h  \overline{U}\|^2\|\tV_{1}\|+ C\|\nabla \tV_{1}\|^2\|\widetilde{U}\|^2+\frac{\nu}{4}\|\nabla \widetilde{U}\|^2.
    \end{align*}
    Combining these two estimates with \eqref{e.120101}, \eqref{e.120103} and \eqref{e.120102} we obtain 
    \begin{align*}
        \frac{d}{dt}\left(\|\overline{U}\|^2+\|\widetilde{U}\|^2\right)\leq C\left(\|\nabla_h  \bV_2\|^2+\|\nabla  \tV_2\|^2 + \|\nabla \tV_1\|^2\right)\left(\|\overline{U}\|^2+\|\widetilde{U}\|^2\right) + \|\nabla_h  \overline{U}\|^2\|\tV_{1}\|.
    \end{align*}
    Denote 
    \[
        C(s,t) = \exp\left(C\int_{s}^t\left(\|\nabla_h  \bV_2(r)\|^2+\|\nabla \tV_2(r)\|^2 + \|\nabla \tV_1(r)\|^2 \right) dr\right).
    \]
    By Gr\"onwall's inequality and \eqref{e.120104}, we deduce 
    \begin{align}\label{e.120105}
            \|\overline{U}(t)\|^2+\|\widetilde{U}(t)\|^2&\leq \left(\|\overline{U}(0)\|^2+\|\widetilde{U}(0)\|^2\right)C(0,t) + \int_0^t \|\nabla_h  \overline{U}(s)\|^2\|\tV_{1}(s)\|C(s,t)ds\notag
            \\
            &\leq\left(\|\overline{U}(0)\|^2+\|\widetilde{U}(0)\|^2\right)C(0,t)\left(1+\left(\sup_{s\in[0,t]}\|\tV_{1}(s)\|\right) \int_0^t \|\nabla_h  \overline{U}(s)\|^2ds\right)\notag
            \\
            &\leq \left(\|\overline{U}(0)\|^2+\|\widetilde{U}(0)\|^2\right)C(0,t)\left(1+\sup_{s\in[0,t]}\|\tV_{1}(s)\|\right). 
    \end{align}

    Now let $\phi$ be a function from $C^{\gg}(H^1)$ such that $\|\phi\|_{C^\gamma}\leq 1$ and let $v_1,v_2\in H$ denote the initial conditions corresponding to $V_1,V_2$, respectively. Then for any $t>0$, and $\gg_0\leq \gg$, by estimate \eqref{e.120105}, we have 
    \begin{align*}
        |P_t\phi(v_1)-P_t\phi(v_2)| &= \left|\Eb \phi(V_1(t))-\Eb \phi(V_2(t))\right|
        \\
        &\leq C\Eb\|V_1(t)-V_2(t)\|_1^{\gg_0}
        \\
        &\leq C\Eb\|V_1(t)-V_2(t)\|^{\frac{\gg_0}{2}}\left(\|V_1(t)\|_{2}^{\frac{\gg_0}{2}}+\|V_2(t)\|_{2}^{\frac{\gg_0}{2}}\right)
        \\
        &\leq \|v_1-v_2\|^{\frac{\gg_0}{2}}\Eb C_2(t)\left(\|V_1(t)\|_{2}^{\frac{\gg_0}{2}}+\|V_2(t)\|_{2}^{\frac{\gg_0}{2}}\right),
    \end{align*}
    where 
        \[
            C_2(t) = \left(1+\sup_{s\in[0,t]}\|V_1(s)\|^{\gg_0/4}\right)\exp\left(C\gg_0\int_0^t\|\nabla V_2(s) \|^2 + \|\nabla V_1(s) \|^2 ds\right).
        \]
    Now in view of \cref{l.120101}, by H\"older's inequality we have 
    \begin{align*}
        &\Eb C_2(t)\left(\|V_1(t)\|_{2}^{\frac{\gg_0}{2}}+\|V_2(t)\|_{2}^{\frac{\gg_0}{2}}\right)\\
        &\leq C\left(1+\Eb\sup_{s\in[0,t]}\|V_1(s)\|^{4\gg_0}+\Eb \exp\left(4C\gg_0\int_0^t\|\nabla  V_2(s)\|^2 + \|\nabla  V_1(s)\|^2ds\right)+\Eb\|V_1(t)\|_{2}^{\gg_0}+\Eb\|V_2(t)\|_{2}^{\gg_0}\right)\\
       &\leq C(t)\exp(\eta_0 R^2), 
    \end{align*} 
    by taking $\gamma_0$ sufficiently small so that $4C\gg_0\leq \eta_0\nu$ for $\eta_0$ from \cref{l.120101}. The proof is complete by choosing $\delta = \gamma_0/2$.
\end{proof}

The next lemma asserts the bootstrap step from exponential mixing with respect to $C^\gg(H)$ observables to exponential mixing with respect to $C^1(H^1)$ observables.

\begin{lemma}\label{l.121701}
    If for any $\gg\in (0,1]$, $\phi\in C^{\gg}(H)$ and $v_0\in H^1$, one has 
    \begin{align}\label{e.022801}
        \left|P_{t}\phi(v_0)-\int_{H}\phi(v)\mu(dv)\right|\leq C(\|v_0\|_1)\|\phi\|_{C^{\gg}(H)}e^{-c t}, 
    \end{align}
    for all $t\geq0$, then one additionally has
    \begin{align*}
        \left|P_{t}\varphi(v_0)-\int_{H}\varphi(v)\mu(dv)\right|\leq C(\|v_0\|_1)\|\varphi\|_{C^{1}(H^1)}e^{-\frac12ct},
    \end{align*}
    for all $t\geq0$ and $\varphi\in C^{1}(H^1)$. 
\end{lemma}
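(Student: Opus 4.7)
The strategy is to exploit the parabolic regularization in $P_s$ to upgrade the $C^1(H^1)$-regular observable $\varphi$ to a (globally) H\"older-regular observable on the weaker space $H$, at which point the hypothesis \eqref{e.022801} can be invoked. For $t\leq 1$ the inequality is trivial since $e^{-ct/2}$ is bounded below, so assume $t\geq 1$, set $s=1$, and define $\psi:=P_s\varphi$. By the Markov property and the invariance of $\mu$ under $P_s$,
\[
    P_t\varphi(v_0)-\int_H\varphi\,d\mu = P_{t-s}\psi(v_0)-\int_H\psi\,d\mu.
\]
By \cref{l.120301} applied with $\gg=1$, there exists $\dd\in(0,1)$ such that for every $R>0$ and all $v_1,v_2\in B_R(H)$,
\[
    |\psi(v_1)-\psi(v_2)|\leq C\|\varphi\|_{C^1(H^1)}e^{\eta_0 R^2}\|v_1-v_2\|^\dd,
\]
so $\psi$ is only \emph{locally} H\"older in $H$, with constant growing exponentially in $R^2$.

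To convert this into a globally H\"older observable on $H$, I introduce a Lipschitz cutoff $\chi_R:H\to[0,1]$ equal to $1$ on $B_R(H)$ and $0$ off $B_{2R}(H)$, with Lipschitz constant at most $C/R$, and decompose $\psi=\psi\chi_R+\psi(1-\chi_R)$. A short case analysis (depending on whether the pair $(v_1,v_2)$ lies inside or outside $B_{2R+1}(H)$), combined with $\|\psi\|_\infty\leq\|\varphi\|_{C^1(H^1)}$, shows $\psi\chi_R\in C^\dd(H)$ with $\|\psi\chi_R\|_{C^\dd(H)}\leq C\|\varphi\|_{C^1(H^1)}e^{CR^2}$ for some $C>0$ independent of $R$ and $t$. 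Applying the hypothesis \eqref{e.022801} with $\gg=\dd$ to $\psi\chi_R$ produces
\[
    \left|P_{t-s}(\psi\chi_R)(v_0)-\int_H\psi\chi_R\,d\mu\right|\leq C(\|v_0\|_1)\|\varphi\|_{C^1(H^1)}e^{CR^2-c(t-s)}.
\]
For the remainder $\psi(1-\chi_R)$, I use $\|\psi\|_\infty\leq\|\varphi\|_{C^1(H^1)}$ together with the exponential moment bounds of \cref{l.120101}, which yield tails of the form
\[
    \Pb(\|V(t-s,v_0)\|>R)+\mu(B_R^c)\leq K(\|v_0\|_1)e^{-\eta R^2},
\]
for some $\eta>0$ and a polynomial-type function $K$. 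Hence the remainder is dominated by $K(\|v_0\|_1)\|\varphi\|_{C^1(H^1)}e^{-\eta R^2}$.

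Summing the two contributions and optimizing by setting $R^2=c(t-s)/(2(C+\eta))$ balances the factors $e^{CR^2-c(t-s)}$ and $e^{-\eta R^2}$ at the common value $e^{-\eta c(t-s)/(2(C+\eta))}$, producing the exponentially decaying bound of the desired form after absorbing the $O(1)$ shift from $s$ into the constants and the $\|v_0\|$-dependent prefactor into $C(\|v_0\|_1)$. The main obstacle is the exponential-in-$R^2$ blow-up of the local H\"older constant of $\psi=P_s\varphi$ from \cref{l.120301}: it must be delicately balanced against the $t$-exponential decay afforded by the hypothesis, and this is only possible because the tail probabilities for both $V(t-s,v_0)$ and $\mu$ also decay like $e^{-\eta R^2}$ (via the exponential moment bounds of \cref{l.120101}); purely polynomial moments would yield only algebraic-in-$t$ decay of the remainder after the same optimization, destroying the exponential mixing rate. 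The verification that the cutoff product $\psi\chi_R$ inherits global H\"older regularity from the local estimate on $\psi$ and the Lipschitz character of $\chi_R$ is routine.
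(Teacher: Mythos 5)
Your proof is essentially the paper's proof. The paper also sets $\phi := P_1\varphi$, uses Markov plus $\mu$-invariance to rewrite $P_t\varphi(v_0)-\int\varphi\,d\mu = P_{t-1}\phi(v_0)-\int\phi\,d\mu$, splits $\phi = \phi\chi_R + \phi\overline{\chi}_R$ with a radial cutoff, applies \cref{l.120301} to conclude $\phi\chi_R\in C^\dd(H)$ with constant $\lesssim \|\varphi\|_{C^1(H^1)}\exp(\eta_0 R^2)$, feeds this into the hypothesis \eqref{e.022801}, controls the tail term by $\|\phi\|_\infty\leq\|\varphi\|_{C^1(H^1)}$ together with Markov's inequality and the exponential moment bound \eqref{e.120308}, and then optimizes $R$. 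One detail you glossed over: the paper obtains the specific rate $e^{-ct/2}$ (not just some positive rate) precisely because the \emph{same} constant $\eta_0$ from \cref{l.120101} governs both the growth $\exp(\eta_0 R^2)$ of the local H\"older constant of $P_1\varphi$ in \cref{l.120301} and the Gaussian tail decay $\exp(-\eta_0 R^2)$ from \eqref{e.120308}; choosing $R=\sqrt{ct/(2\eta_0)}$ then makes both contributions equal to $e^{-ct/2}$. With your generic constants $C$ and $\eta$ the optimization yields $e^{-\eta c(t-1)/(C+\eta)}$, which is exponentially decaying but does not match the asserted $c/2$ unless you track that these are the same $\eta_0$. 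Your final displayed exponent $e^{-\eta c(t-s)/(2(C+\eta))}$ also has an extraneous factor of $2$ in the denominator (the correct balance of $e^{CR^2-c(t-s)}$ against $e^{-\eta R^2}$ is at $R^2=c(t-s)/(C+\eta)$). These are cosmetic; the argument is sound.
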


\begin{proof}
    Pick any  $\varphi\in C^{1}(H^1)$ and denote $\phi = P_1\varphi$. By the invariance of $\mu$, for $t\geq 1$ we have 
    \begin{align*}
        \left|P_{t}\varphi(v_0)-\int_{H}\varphi(v)\mu(dv)\right|= \left|P_{t-1}\phi(v_0)-\int_{H}\phi(v)\mu(dv)\right|.
    \end{align*}
    Fix $\delta$ from Lemma \ref{l.120301}. Let $\chi_R\in C^{\dd}(H)$ such that $0\leq \chi_R\leq 1 $ and $\chi_R(u)=1$ for $\|u\|\leq R$, and $\chi_R(u)=0$ for $\|u\|\geq R+1$. We also denote $\overline{\chi}_R = 1- \chi_R$. Then by \cref{l.120301}, we have $\phi\chi_R\in C^{\dd}(H)$ with 
    \[\|\phi\chi_R\|_{C^{\dd}(H)}\leq C\|\varphi\|_{C^1(H^1)}\exp\left(\eta_0 R^2\right).\]
    By applying \eqref{e.022801} for $\phi\chi_R$,
    we have
    \begin{align*}
        E_R: = \left|P_{t-1}(\phi\chi_R)(v_0)-\int_{H}(\phi\chi_R)(v)\mu(dv)\right|\leq C(\|v_0\|_1)\|\varphi\|_{C^1(H^1)}\exp\left(\eta_0 R^2-ct\right). 
    \end{align*}
    Note that 
    \begin{align*}
        \left|P_{t-1}\phi(v_0)-\int_{H}\phi(v)\mu(dv)\right|\leq  E_R + \overline{E}_R,
    \end{align*}
    with 
    \begin{align*}
        \overline{E}_R = \left|P_{t-1}(\phi\overline{\chi}_R)(v_0)-\int_{H}(\phi\overline{\chi}_R)(v)\mu(dv)\right|.
    \end{align*}
    Note that $\sup_{u\in H}|\phi(u)|\leq \|\varphi\|_{C^1(H^1)}$. By Markov's inequality and estimate \eqref{e.120308}, we deduce
    \begin{align*}
        \overline{E}_R &\leq \int_{H}\phi(u)\overline{\chi}_R(u)P_{t-1}(v_0,du)+ \sup_{u\in H}|\phi(u)|\mu\{\|v\|\geq R\}\\
        &\leq \|\varphi\|_{C^1(H^1)}\left(\int_{\{\|u\|\geq R\}}P_{t-1}(v_0,du)+e^{-\eta_0 R^2}\int_{H}\exp(\eta_0\|v\|^2)\mu(dv)\right)\\
        &\leq \|\varphi\|_{C^1(H^1)}e^{-\eta_0 R^2}\left(\Eb\exp\left( \eta_0\|V(t-1;v_0)\|^2\right)+\int_{H}\exp(\eta_0\|v\|^2)\mu(dv)\right)\\
        &\leq C\|\varphi\|_{C^1(H^1)}e^{-\eta_0 R^2}\exp\left(\eta_0\|v_0\|^2\right).
    \end{align*}
    Upon making the choice $R = \sqrt{\frac{ct}{2\eta_0}}$, we therefore conclude
    \begin{align*}
        \left|P_{t-1}\phi(v_0)-\int_{H}\phi(v)\mu(dv)\right|&\leq \|\varphi\|_{C^1(H^1)}\left(C(\|v_0\|_1)e^{\eta_0 R^2-ct} + C(\|v_0\|)e^{-\eta_0 R^2} \right)\\
        & = C(\|v_0\|_1)\|\varphi\|_{C^1(H^1)} e^{-\frac12ct},
    \end{align*}
    as desired. Note that the inequality extends to $t\in[0,1]$ by boundedness.
\end{proof}

The proof of \cref{t.121701} now readily follows.

\begin{proof}[Proof of Theorem \ref{t.121701}]
Given any $\gg\in (0,1]$, $\phi\in C^{\gg}(H)$, and $v_0\in H^1$, it follows from \cref{c:081201} that
    \begin{align}
         \left|P_{t}\phi(v_0)-\int_{H}\phi(v)\mu(dv)\right|\leq C\|\phi\|_{C^{\gg}(H)}C_\gg(\|v_0\|_1)e^{-c\gg t},\notag
    \end{align}
holds for all $t\geq0$. By \cref{l.121701}, we then conclude that
     \begin{align*}
        \left|P_{t}\varphi(v_0)-\int_{H}\varphi(v)\mu(dv)\right|\leq C(\|v_0\|_1)\|\varphi\|_{C^{1}(H^1)}e^{-\frac12c\gg t},
    \end{align*}
for all $t\geq0$ and $\varphi\in C^{1}(H^1)$, as claimed.

\end{proof}

\section{Proof of the Main Theorem}\label{sect:final}

We are finally ready to prove our main result of this article, \cref{t.121702}, by combining \cref{thm:convergence_C} and \cref{t.121701}.

\begin{proof}[Proof of \cref{t.121702}]
    Recall that 
        \[
            \mathcal{U}_t(\bv,\tv) = (\bv,e^{\aa Jt}\tv)=(\bv,\tu).
        \]
    Let $\ee>0$ and $\varphi\in C^1(H^1)$. By \cref{t.121701}, given $v_0\in H^1$, there exists $T_0>0$ such that 
    \begin{align*}
        \left|P_{t}\varphi(v_0)-\int_{H^1}\varphi(v)\mu(dv)\right|<\ee/2,
    \end{align*}
    for all $t\geq T_0$. On the other hand, by \cref{thm:convergence_C}, for each $t\geq T_0$, there is $\aa_0=\aa_0(t,\ee)>0$ such that
    \begin{align*}
        \left|\int_{H^1}\varphi(v)\mathcal{U}_t^*P_{t}^{\aa}(v_0,dv)-P_{t}\varphi(v_0)\right|<\ee/2, 
    \end{align*}
    for all $\aa\geq \aa_0$. Therefore, for all $t\geq T_0$ and $\alpha\geq \alpha_0(t,\ee)$, we have 
    \begin{align*}
        \left|\int_{H^1}\varphi(v)\mathcal{U}_t^*P_{t}^{\aa}(v_0,dv)-\int_{H^1}\varphi(v)\mu(dv)\right|<\ee. 
    \end{align*}
    Consequently, we have the weak convergence 
    \[\lim_{t\to\infty}\lim_{\aa\to\infty}\mathcal{U}_t^*P_{t}^{\aa}(v_0,\cdot)=\mu\]
    as desired. 
\end{proof}

\subsection*{Acknowledgments}
Q.L. was partially supported by an AMS-Simons travel grant. V.R.M. was in part supported by the National Science Foundation through DMS 2213363 and DMS 2206491, as well as the Mary P. Dolciani Halloran Foundation.

\appendix 
\section{Auxiliary lemmas}
\begin{lemma}[\hspace{-0.01pt}\cite{constantin2015long}]\label{l.080801}
    Let $s>0, p\in(1,\infty)$ and $f,g\in C^{\infty}(\mathbb T^d)$ with $d=2,3$. Then 
    \begin{equation*}
        \begin{split}
            \|\LL^{s}(fg)\|_{L^p}\lesssim \|g\|_{L^{p_1}}\|\LL^sf\|_{L^{p_2}}+\|\LL^sg\|_{L^{p_3}}\|f\|_{L^{p_4}},
        \end{split}
    \end{equation*}
    where $1/p=1/p_1+1/p_2=1/p_3+1/p_4$ and $p_2,p_3\in (1,\infty)$. Moreover, 
    \begin{align*}
        \left\|\Lambda^s(f g)-f \Lambda^s g\right\|_{L^p} \lesssim \|\nabla  f\|_{L^{p_1}}\left\|\Lambda^{s-1} g\right\|_{L^{p_2}}+ \left\|\Lambda^s f\right\|_{L^{p_3}}\|g\|_{L^{p_4}}
    \end{align*}
    where $p_i$ are as above.  
\end{lemma}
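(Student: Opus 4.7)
The approach is to prove both inequalities via Littlewood--Paley theory and Bony's paraproduct calculus, which is the standard framework for Kato--Ponce-type estimates. Let $\{\Delta_j\}_{j\geq -1}$ denote the dyadic Littlewood--Paley projectors on $\mathbb{T}^d$ and set $S_j := \sum_{k\leq j-1}\Delta_k$. Decompose
\begin{equation*}
fg = T_f g + T_g f + R(f,g), \qquad T_f g := \sum_j S_{j-1}f\,\Delta_j g, \qquad R(f,g) := \sum_{|j-k|\leq 1}\Delta_j f\,\Delta_k g,
\end{equation*}
where $T_f g$ is the low-high paraproduct and $R(f,g)$ is the resonant remainder. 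Each block $S_{j-1}f\,\Delta_j g$ has Fourier support in an annulus of radius $\sim 2^j$, so by the Mikhlin multiplier theorem (applicable since the relevant exponents lie in $(1,\infty)$), $\LL^s$ acts on such a block essentially as multiplication by $2^{js}$.

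For the product estimate, I would bound the three paraproducts separately. For the low-high piece,
\begin{equation*}
\|\LL^s T_f g\|_{L^p} \lesssim \Big\|\Big(\sum_j 4^{js}\,|S_{j-1}f\,\Delta_j g|^2\Big)^{1/2}\Big\|_{L^p} \lesssim \|Mf\|_{L^{p_1}}\,\|\LL^s g\|_{L^{p_2}} \lesssim \|f\|_{L^{p_1}}\,\|\LL^s g\|_{L^{p_2}},
\end{equation*}
using, in order, the Littlewood--Paley square-function characterization of $L^p$ (requiring $p\in(1,\infty)$), the pointwise bound $|S_{j-1}f|\lesssim Mf$ together with the Fefferman--Stein vector-valued maximal inequality, and H\"older with $1/p = 1/p_1 + 1/p_2$. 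A symmetric argument on $T_g f$ contributes $\|\LL^s f\|_{L^{p_3}}\|g\|_{L^{p_4}}$. The remainder $R(f,g)$ has spectrum supported in a ball rather than an annulus, which is handled by regrouping the dyadic sum into annular slices $\sum_\ell\Delta_\ell R(f,g)$ (using $s>0$ to sum the low-frequency tail) and applying the same machinery.

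For the commutator estimate, the same paraproduct decomposition applied to $\LL^s(fg) - f\LL^s g$ yields two kinds of pieces. The ``non-cancelling'' pieces coming from $\LL^s T_g f$ and $\LL^s R(f,g)$ are estimated directly by the product-rule argument and contribute $\|\LL^s f\|_{L^{p_3}}\|g\|_{L^{p_4}}$. The remaining piece is the genuine commutator
\begin{equation*}
[\LL^s, T_f]g = \sum_j [\LL^s, S_{j-1}f]\,\Delta_j g.
\end{equation*}
Since $\Delta_j g$ is spectrally concentrated at frequency $\sim 2^j$ while $S_{j-1}f$ is concentrated at strictly lower frequencies, a Taylor expansion of the symbol $|\xi|^s$ about the central frequency of $\Delta_j g$ produces a leading term proportional to $\nabla f$ with a gain of $2^{-j}$ relative to the naive bound $2^{js}$. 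Concretely, one extracts a pointwise bound of the form $|[\LL^s, S_{j-1}f]\Delta_j g|\lesssim 2^{(s-1)j}\,\widetilde{\Delta}_j\bigl(M(\nabla f)\cdot M(\Delta_j g)\bigr)$ (where $\widetilde{\Delta}_j$ is a fattened projector), after which square-function summation yields $\|\nabla f\|_{L^{p_1}}\|\LL^{s-1}g\|_{L^{p_2}}$.

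The main obstacle is the symbol-level analysis of the commutator $[\LL^s, S_{j-1}f]$ acting on frequency shell $2^j$. Making the Taylor expansion rigorous requires either a Coifman--Meyer multiplier argument applied to the bilinear symbol $\bigl(|\xi+\eta|^s - |\xi|^s\bigr)\chi_{|\eta|\ll|\xi|}$, or an explicit kernel computation exploiting that $|\xi|^s$ is smooth and homogeneous of degree $s$ away from the origin. This is also the step where the hypothesis $p_2,p_3\in(1,\infty)$ is genuinely used, via vector-valued Calder\'on--Zygmund theory; the endpoints $\{1,\infty\}$ are known to require BMO-type substitutions or separate square-function arguments, and are correspondingly excluded from the statement.
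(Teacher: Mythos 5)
The paper does not prove this lemma; it is quoted from \cite{constantin2015long}, and these are in turn the standard Kato--Ponce/Kenig--Ponce--Vega fractional Leibniz and commutator estimates. There is therefore no internal proof to compare against. Your Littlewood--Paley/Bony paraproduct sketch is the standard route and its architecture is correct, but two steps deserve tightening. First, to isolate the genuine commutator $[\Lambda^s, T_f]g$ you must apply Bony's decomposition to \emph{both} $\Lambda^s(fg)$ and $f\Lambda^s g$; the remaining ``non-cancelling'' contribution is then the four terms $\Lambda^s T_g f$, $-T_{\Lambda^s g}f$, $\Lambda^s R(f,g)$, $-R(f,\Lambda^s g)$, not just the two you list. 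Each is still bounded by $\|\Lambda^s f\|_{L^{p_3}}\|g\|_{L^{p_4}}$ by the same square-function and maximal-function machinery --- for $T_{\Lambda^s g}f$ one uses the kernel bound $|S_{j-1}\Lambda^s g|\lesssim 2^{js}Mg$, which is another place $s>0$ is used --- but they should be written down. Second, your displayed estimate $|[\Lambda^s,S_{j-1}f]\Delta_j g|\lesssim 2^{(s-1)j}\widetilde{\Delta}_j\bigl(M(\nabla f)\cdot M(\Delta_j g)\bigr)$ is not a sensible pointwise inequality, since a Fourier projector applied to a product of maximal functions is not a pointwise dominator. What one actually establishes, via the Taylor/Coifman--Meyer step you describe, is the pointwise bound $|[\Lambda^s,S_{j-1}f]\Delta_j g(x)|\lesssim 2^{(s-1)j}\,M(\nabla f)(x)\,M(\Delta_j g)(x)$ \emph{together with} the separate observation that $[\Lambda^s,S_{j-1}f]\Delta_j g$ has Fourier support in the annulus $\{|\xi|\sim 2^j\}$; it is the conjunction of these two facts that licenses summing in $j$ via the square function. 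These are notational rather than conceptual slips, and the overall proposal is sound.
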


\begin{lemma}[\hspace{-0.01pt}\cite{temam2024navier}]\label{l.081001}
    Let $b(u,v,w)=\langle u\cdot \nabla  v, w \rangle$ for $u,v,w \in C^{\infty}(\Tb^d;\Rb^d)$. Then 
    \[|b(u,v,w)|\leq C\|u\|_{m_1}\|v\|_{m_2+1}\|w\|_{m_3},\]
    where 
    \begin{align*}
        &m_1+m_2+m_3\geq \frac{d}{2}, \, \text{ if } m_i\neq \frac{d}{2} \text{ for all } i=1,2,3, \\
        &m_1+m_2+m_3>\frac{d}{2}, \, \text{ if } m_i=\frac{d}{2} \text{ for some } i. 
    \end{align*}
\end{lemma}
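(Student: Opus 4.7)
The plan is to prove the estimate through a direct application of the three-factor Hölder inequality coupled with the Sobolev embedding theorem on $\mathbb{T}^d$. First, I would pointwise bound
\[
|b(u,v,w)| \leq \int_{\mathbb{T}^d} |u|\,|\nabla v|\,|w|\,dx,
\]
and then apply Hölder with three exponents $p_1, p_2, p_3 \in [2, \infty]$ satisfying $1/p_1 + 1/p_2 + 1/p_3 = 1$ to obtain
\[
|b(u,v,w)| \leq \|u\|_{L^{p_1}}\,\|\nabla v\|_{L^{p_2}}\,\|w\|_{L^{p_3}}.
\]
The real task is then to choose the $p_i$ so that each factor is controlled by the desired Sobolev norm.

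For each index $i$, I would invoke the standard Sobolev embedding $H^s(\mathbb{T}^d) \hookrightarrow L^p(\mathbb{T}^d)$, which is available under the scaling condition $1/p \geq 1/2 - s/d$ when $s < d/2$; holds with $p = \infty$ (i.e., $1/p = 0$) when $s > d/2$; and is available for every finite $p$ but fails at $p = \infty$ when $s = d/2$. Applying this to $u$ at regularity $m_1$, to $\nabla v$ at regularity $m_2$ (which uses $v \in H^{m_2+1}$), and to $w$ at regularity $m_3$, each factor in the Hölder inequality is then bounded by the corresponding homogeneous Sobolev norm, yielding $|b(u,v,w)| \lesssim \|u\|_{m_1}\|v\|_{m_2+1}\|w\|_{m_3}$. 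Summing the three Sobolev scalings $1/p_i \geq 1/2 - m_i/d$ and enforcing $1/p_1+1/p_2+1/p_3=1$ gives the feasibility condition $m_1+m_2+m_3 \geq d/2$ for such exponents to exist.

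The only delicate point, which will be the main obstacle, is the borderline case when some $m_i$ equals $d/2$. Since $H^{d/2}$ embeds into $L^p$ for every finite $p$ but not into $L^\infty$, the factor with critical regularity cannot saturate the endpoint exponent $p_i = \infty$; one must spend a positive amount of Hölder integrability on that factor, leaving strictly less than $d/2$ to be distributed among the other two factors. This is precisely what the strict inequality $m_1+m_2+m_3 > d/2$ encodes in the second case of the hypothesis, and handling it amounts to choosing an arbitrarily large but finite $p_i$ and absorbing the extra slack into the other Sobolev exponents. Apart from this endpoint bookkeeping, the proof is routine; indeed, the estimate is classical and may be found in Temam's monograph on the Navier-Stokes equations, to which the lemma is attributed.
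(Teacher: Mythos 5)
The paper does not actually prove this lemma; it is cited verbatim from Temam's monograph, so there is no in-paper argument to compare against. Your H\"older-plus-Sobolev-embedding argument is precisely the classical proof: the scaling bookkeeping $\sum_i\bigl(\tfrac12-\tfrac{m_i}{d}\bigr)\le 1$ recovers $m_1+m_2+m_3\ge d/2$, and you correctly identify that the failure of $H^{d/2}(\Tb^d)\hookrightarrow L^\infty$ forces the strict inequality in the critical case, so the proposal is sound and matches the standard reference.
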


\begin{lemma}[\hspace{-0.05pt}\cite{flandoli2012stochastic}]\label{l.081101}
    Let $\aa>0$, $T>0$, $p\geq 1$, $\ll\neq 0$, and $f(t)$ be a scalar valued stochastic process such that $\sup_{t\in[0,T]}|f(t)|$ is measurable. If for some $\bb>0$, 
    \begin{align*}
        \Eb\sup_{t\in[0,T]}|f(t)|^p &\leq C_0,
        \\
        \Eb|f(t)-f(s)|^p&\leq C_0|t-s|^{\bb}, \quad s,t\in [0,T], 
    \end{align*}
    then there is $C=C(C_0,T,p)$ such that 
    \begin{align*}
        \Eb\sup_{t\in[0,T]}\left|\int_0^te^{\mi \aa \ll s}f(s)ds\right|^p\leq C\left(\left|\frac{1}{\aa\ll}\right|^{\bb}+\left|\frac{1}{\aa\ll}\right|\right)
    \end{align*}
\end{lemma}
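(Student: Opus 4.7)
The task is to convert the rapid oscillation of $e^{\mi\aa\ll s}$ into genuine decay in $|\aa\ll|$, with exponent $\bb$ reflecting the $L^p$-moment H\"older regularity of $f$. Since $f$ is not assumed to be pathwise differentiable, a direct integration by parts is unavailable, so I would substitute a discretization paired with Abel (summation-by-parts), which plays the analogous role at the $L^p$ level.

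Concretely, I would fix a partition $0=s_0<\cdots<s_N=T$ of uniform mesh $\DD=T/N$ (to be optimized at the end), and for each $t\in[s_m,s_{m+1}]$ decompose
\begin{align*}
\int_0^t e^{\mi\aa\ll s}f(s)\,ds
= \sum_{k<m}f(s_k)\int_{s_k}^{s_{k+1}}e^{\mi\aa\ll s}\,ds
+ \sum_{k<m}\int_{s_k}^{s_{k+1}}e^{\mi\aa\ll s}(f(s)-f(s_k))\,ds
+ \int_{s_m}^{t}e^{\mi\aa\ll s}f(s)\,ds.
\end{align*}
For the first (coarse) sum, I would use the exact primitive $\int_{s_k}^{s_{k+1}}e^{\mi\aa\ll s}\,ds = (e^{\mi\aa\ll s_{k+1}}-e^{\mi\aa\ll s_k})/(\mi\aa\ll)$ and apply Abel summation: two boundary contributions of size $\sup_s|f(s)|/|\aa\ll|$ result, while the interior telescoping sum, controlled in $L^p$ via $\Eb|f(s_k)-f(s_{k-1})|^p\leq C_0\DD^{\bb}$ together with Minkowski's inequality, contributes at worst $N\DD^{\bb/p}/|\aa\ll|$. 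The middle correction sum is bounded (again by Minkowski and the H\"older hypothesis) by $T\DD^{\bb/p}$, and the trailing short integral by $\DD\sup_s|f(s)|$.

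Balancing with $\DD\sim 1/|\aa\ll|$ collapses all four contributions onto the scale $|1/(\aa\ll)|^{\bb}+|1/(\aa\ll)|$ (after absorbing constants depending on $C_0$, $T$, and $p$), which is the claimed bound. The main technical obstacle is moving the supremum in $t$ inside the expectation without losing the quantitative gain. I would exploit the fact that $t\mapsto\int_0^t e^{\mi\aa\ll s}f(s)\,ds$ is Lipschitz in $t$ with random Lipschitz constant $\sup_s|f(s)|$: sampling $t$ on a grid of mesh $\sim 1/|\aa\ll|$ reduces the supremum to a maximum over $O(|\aa\ll|)$ points at an additive error of the same order as the target, and the union bound then handles the finite maximum, with any polynomial overhead absorbed into $C(C_0,T,p)$. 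Alternatively, one can keep the decomposition above uniform in $m\in\{0,\dots,N-1\}$ from the outset, so that only $\sup_k$-type quantities appear on the right, and the $t$-dependence disappears.
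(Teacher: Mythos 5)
The paper does not supply a proof of this lemma---it is quoted verbatim from Flandoli--Mahalov \cite{flandoli2012stochastic}---so there is no in-paper argument to compare against. Your proposal is nevertheless correct and, as far as I can tell, morally the same as what appears in the reference, which performs a time-mollification of $f$ followed by integration by parts; your partition-plus-Abel-summation scheme is the discrete avatar of that mollification, and both hinge on the same balance $\Delta\sim 1/|\aa\ll|$ between the oscillation scale and the regularity of $f$.

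Let me record the bookkeeping so that there is no doubt. With $s_k=k\Delta$, $N=\lceil T/\Delta\rceil$, and $a_k=e^{\mi\aa\ll s_k}/(\mi\aa\ll)$, your decomposition gives, uniformly in the block index $m$,
\begin{align*}
\left|\int_0^t e^{\mi\aa\ll s}f(s)\,ds\right|
&\le \frac{2\sup_s|f(s)|}{|\aa\ll|}+\frac{1}{|\aa\ll|}\sum_{k=1}^{N-1}|f(s_k)-f(s_{k-1})|
+\sum_{k=0}^{N-1}\int_{s_k}^{s_{k+1}}|f(s)-f(s_k)|\,ds+\Delta\sup_s|f(s)|,
\end{align*}
which is what makes the supremum over $t$ free (your ``second, preferred'' resolution). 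Minkowski and the two hypotheses then give $L^p$ bounds $\lesssim C_0^{1/p}|\aa\ll|^{-1}$ for the first and last terms and $\lesssim TC_0^{1/p}|\aa\ll|^{-\beta/p}$ for the two middle terms once $\Delta\sim 1/|\aa\ll|$. Two points worth making explicit: (i) to pass from the $L^p$ bound to the stated $p$-th moment bound you need $(a+b)^p\le 2^{p-1}(a^p+b^p)$, which converts $|\aa\ll|^{-\beta/p}$ into $|\aa\ll|^{-\beta}$ and $|\aa\ll|^{-1}$ into $|\aa\ll|^{-p}\le|\aa\ll|^{-1}$, the last inequality using $p\ge1$ and $|\aa\ll|\ge1$; and (ii) when $|\aa\ll|<1/T$ the partition degenerates, but then $|1/(\aa\ll)|\ge T$ and the crude bound $\Eb\sup_t|\int_0^t e^{\mi\aa\ll s}f\,ds|^p\le T^pC_0$ already yields the claim after enlarging $C$. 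With those two remarks inserted, the argument is complete and the constant depends only on $(C_0,T,p)$ as required (the factor $(1+\beta/p)^{-1}$ produced by the middle-correction integral is uniformly $\le1$ in $\beta$).
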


\begin{lemma}\label{l.081002}
    Let $u,v\in C^{\infty}(\mathbb R^d,\mathbb R^2)$, $\nabla_h  = (\pp_x,\pp_y)$ and $J=\left(\begin{array}{cc}0 & -1 \\ 1 & 0\end{array}\right)$. Then for $\aa,\bb\in \mathbb R$, 
    \begin{align*}
        (e^{\aa J} u)\cdot\nabla_h (e^{\bb J}v)=\frac12 e^{(\aa+\bb)J}\left(u\cdot\nabla_h  v- u^{\perp}\cdot\nabla_h  v^{\perp}\right) + \frac12 e^{(\bb-\aa)J}\left(u\cdot\nabla_h  v- u\cdot\nabla_h ^{\perp} v^{\perp}\right)
    \end{align*}
    and 
    \begin{align*}
        \left(\nabla_h  \cdot (e^{\aa J} u)\right)(e^{\bb J}v)=\frac12 e^{(\aa+\bb)J}\left((\nabla_h  \cdot u) v-(\nabla_h  \cdot u^{\perp}) v^{\perp}\right) + \frac12 e^{(\bb-\aa)J}\left((\nabla_h  \cdot u) v-(\nabla_h ^{\perp} \cdot u) v^{\perp}\right). 
    \end{align*}
    In particular, when $d=3$ we have 
    \begin{align}\label{e.081105}
       w\left(e^{-\aa J}u\right)\partial_{z}u=  \frac12 e^{-\aa J}\left(w(u) \pp_zu-w(u^{\perp}) \pp_zu^{\perp}\right) + \frac12 e^{\aa J}\left(w(u) \pp_zu+w(u^{\perp}) \pp_zu^{\perp}\right),
    \end{align}
    where $w$ is defined as in \eqref{e.081104}. 
\end{lemma}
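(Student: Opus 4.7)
The strategy is to exploit the algebraic structure of the rotation group $\{e^{\aa J}\}_{\aa\in\mathbb{R}}$, which is isomorphic to the unit complex numbers via the identification $\mathbb{R}^2 \cong \mathbb{C}$ (sending $u = (u_1,u_2) \mapsto u_1+iu_2$, $J \mapsto$ multiplication by $i$, so that $u^{\perp} = Ju \mapsto iu$ and $e^{\aa J} \mapsto e^{i\aa}\cdot$). Under this dictionary, all three identities reduce to the single relation $e^{i\aa}e^{i\bb}=e^{i(\aa+\bb)}$ combined with $\overline{e^{i\aa}} = e^{-i\aa}$; equivalently, in real form, they follow from the product-to-sum formulas for sine and cosine applied to $e^{\aa J}=\cos(\aa) I + \sin(\aa) J$. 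This is precisely the coordinate-free viewpoint flagged in the introduction.

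For the first identity, the plan is to express the scalar directional derivative $u \cdot \nabla_h$, acting on complex-valued functions, in terms of the Wirtinger derivatives $\partial = \partial_x + i\partial_y$ and $\bar\partial = \partial_x - i\partial_y$: a short check gives $u\cdot\nabla_h = \tfrac12(u\bar\partial + \bar u \partial)$. Substituting $u \mapsto e^{i\aa}u$ and $v \mapsto e^{i\bb}v$ and collecting phase factors then yields
\begin{align*}
(e^{\aa J}u) \cdot \nabla_h (e^{\bb J}v) = \tfrac12 e^{i(\aa+\bb)} u \bar\partial v + \tfrac12 e^{i(\bb-\aa)} \bar u \partial v.
\end{align*}
It remains to recognize $u\bar\partial v$ as $u\cdot\nabla_h v - u^{\perp}\cdot \nabla_h v^{\perp}$ and $\bar u \partial v$ as $u\cdot\nabla_h v - u\cdot\nabla_h^{\perp}v^{\perp}$; both are direct verifications using $v^{\perp} = iv$ together with the Wirtinger expressions for $u\cdot\nabla_h$ and $u\cdot\nabla_h^{\perp}$ (where the latter works out to $\tfrac12(i\bar u \partial - iu\bar\partial)$).

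The divergence identity is entirely parallel: one first checks $\bar\partial u = \nabla_h \cdot u - i\, \nabla_h\cdot u^{\perp}$, so that the substitutions $u \mapsto e^{i\aa}u$, $v \mapsto e^{i\bb}v$ again distribute the phase factors as $e^{i(\aa+\bb)}$ and $e^{i(\bb-\aa)}$ in the required pattern. For the special case \eqref{e.081105} in dimension $d=3$, the key observation is that the map $u\mapsto w(u)$ defined by \eqref{e.081104} is $\mathbb{R}$-linear, so $w(e^{-\aa J}u) = \cos(\aa)\,w(u) - \sin(\aa)\,w(u^{\perp})$. Multiplying by $\partial_z u$ and expanding the right-hand side via $e^{\pm\aa J} = \cos(\aa) I \pm \sin(\aa) J$ together with $J\partial_z u^{\perp} = \partial_z(Ju^{\perp}) = -\partial_z u$ produces the equality after a cancellation. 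The only genuine obstacle throughout is bookkeeping the signs arising from the multiple $\perp$ applications per term; no analytic content is involved.
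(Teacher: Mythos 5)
Your proof is correct. For the first two identities you take essentially the same route as the paper: identify $\mathbb{R}^2$ with $\mathbb{C}$ via $(v_1,v_2)\mapsto v_1+\mi v_2$, write $u\cdot\nabla_h$ in terms of $\partial_x\pm\mi\partial_y$ acting on the complex representative of $v$ with coefficients $u$ and $\bar u$, and distribute the phase factors $e^{\mi\aa},e^{\mi\bb}$. Your version is merely more modular, pre-computing the dictionary $u\bar\partial v = u\cdot\nabla_h v - u^{\perp}\cdot\nabla_h v^{\perp}$, $\bar u\,\partial v = u\cdot\nabla_h v - u\cdot\nabla_h^{\perp}v^{\perp}$, and $\bar\partial u = \nabla_h\cdot u - \mi\,\nabla_h\cdot u^{\perp}$ before inserting phases, whereas the paper performs a single monolithic expansion and reads off the real--imaginary split at the end. (One caution: your $\partial=\partial_x+\mi\partial_y$, $\bar\partial=\partial_x-\mi\partial_y$ is the reverse of the standard Wirtinger normalization; you use it consistently, so there is no error, but a reader accustomed to $\partial_z=\tfrac12(\partial_x-\mi\partial_y)$ may stumble.) Where you genuinely diverge is \eqref{e.081105}: the paper obtains it as a corollary of the divergence identity by substituting $u\mapsto -\int_0^z u(\cdot,\zeta)\,d\zeta$, $v=\partial_z u$, $\aa\mapsto-\aa$, $\bb=0$ and invoking $\nabla_h^{\perp}\cdot u=-\nabla_h\cdot u^{\perp}$, whereas you derive it directly from $\mathbb{R}$-linearity of $w$ and the trigonometric form $e^{\pm\aa J}=\cos\aa\,I\pm\sin\aa\,J$, after which the right-hand side collapses to $\cos\aa\,w(u)\partial_z u+\sin\aa\,w(u^{\perp})\,J\partial_z u^{\perp}$ and matches the left side via $J\partial_z u^{\perp}=-\partial_z u$. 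Both are valid; yours is more self-contained and avoids the primitive substitution, while the paper's is shorter once the divergence identity is already established. No gaps.
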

\begin{proof}
    Let $\phi$ be the linear isomorphism of $\mathbb{C}=\mathrm{span}\{1,\mi\}$ and $\mathbb R^2=\mathrm{span}\{(1,0),(0,1)\}$ such that 
    \begin{align*}
        \phi((v_1,v_2)) = v_1+\mi v_2
    \end{align*}
    Under this isomorphism, since $\phi(Jv) = i\phi(v)$ for $v=(v_1,v_2)$, we have 
    \begin{align*}
        \phi(e^{\aa J}v) = e^{\mi \aa}\phi(v). 
    \end{align*}
    Note that $u\cdot v =\Re(\phi(u)\overline{\phi(v)}) =\frac12(\phi(u)\overline{\phi(v)}+\overline{\phi(u)}\phi(v))$. It follows that 
    \begin{align*}
       \phi((e^{\aa J} u)\cdot\nabla_h (e^{\bb J}v))&=(e^{\aa J} u)\cdot\nabla_h  \phi(e^{\bb J}v)\\
       & = \frac12(e^{\mi\aa}\phi(u)(\pp_x-\mi \pp_y)+e^{-\mi\aa}\overline{\phi(u)}(\pp_x+\mi \pp_y))e^{\mi\bb}\phi(v).
    \end{align*}
    It remains to identify the real and imaginary parts. We have 
    \begin{align*}
        &(e^{\mi\aa}\phi(u)(\pp_x-\mi \pp_y))e^{\mi\bb}\phi(v)\\
        & = e^{\mi(\aa+\bb)}\left(u_1\pp_x+u_2\pp_y+\mi(u_2\pp_x-u_1\pp_y)\right)(v_1+\mi v_2)\\
        & = e^{\mi(\aa+\bb)}\left(u\cdot\nabla_h  v_1+\mi u\cdot\nabla_h  v_2 + \mi(u_2\pp_xv_1-u_1\pp_yv_1)-(u_2\pp_xv_2-u_1\pp_yv_2)\right)\\
        & = e^{\mi(\aa+\bb)}\left(u\cdot\nabla_h  v_1+\mi u\cdot\nabla_h  v_2 - \mi u^{\perp}\cdot\nabla_h  v_1+u^{\perp}\cdot\nabla_h  v_2\right), 
    \end{align*}
    and 
    \begin{align*}
        &(e^{-\mi\aa}\overline{\phi(u)}(\pp_x+\mi \pp_y))e^{\mi\bb}\phi(v)\\
        & = e^{\mi(\bb-\aa)}\left(u_1\pp_x+u_2\pp_y+\mi(u_1\pp_y-u_2\pp_x)\right)(v_1+\mi v_2)\\
        & = e^{\mi(\bb-\aa)}\left(u\cdot\nabla_h  v_1+\mi u\cdot\nabla_h  v_2 +\mi (u_1\pp_y-u_2\pp_x)v_1- (u_1\pp_y-u_2\pp_x)v_2\right)\\
        & = e^{\mi(\bb-\aa)}\left(u\cdot\nabla_h  v_1+\mi u\cdot\nabla_h  v_2-\mi u\cdot\nabla_h ^{\perp}v_1+u\cdot\nabla_h ^{\perp}v_2\right). 
    \end{align*}
    By converting back to vectors through $\phi^{-1}$ we have 
    \begin{align*}
        (e^{\aa J} u)\cdot\nabla_h (e^{\bb J}v)& = \phi^{-1}\phi((e^{\aa J} u)\cdot\nabla_h (e^{\bb J}v))\\
        & = \frac12\phi^{-1}\left(e^{\mi(\aa+\bb)}\left(u\cdot\nabla_h  v_1+\mi u\cdot\nabla_h  v_2 - \mi u^{\perp}\cdot\nabla_h  v_1+u^{\perp}\cdot\nabla_h  v_2\right)\right)\\
        &\quad +\frac12\phi^{-1}\left(e^{\mi(\bb-\aa)}\left(u\cdot\nabla_h  v_1+\mi u\cdot\nabla_h  v_2-\mi u\cdot\nabla_h ^{\perp}v_1+u\cdot\nabla_h ^{\perp}v_2\right)\right)\\
        & =\frac12 e^{(\aa+\bb)J}\left(u\cdot\nabla_h  v- u^{\perp}\cdot\nabla_h  v^{\perp}\right) + \frac12 e^{(\bb-\aa)J}\left(u\cdot\nabla_h  v- u\cdot\nabla_h ^{\perp} v^{\perp}\right).
    \end{align*}
    Similarly, we have 
    \begin{align*}
        \phi\left(\left(\nabla_h  \cdot (e^{\aa J} u)\right)(e^{\bb J}v)\right) &= \left(\nabla_h  \cdot (e^{\aa J} u)\right)\phi(e^{\bb J}v)\\
        & = \frac12((\pp_x-\mi \pp_y)e^{\mi\aa}\phi(u)+(\pp_x+\mi \pp_y)e^{-\mi\aa}\overline{\phi(u)})e^{\mi\bb}\phi(v).
    \end{align*}
    Observe that 
    \begin{align*}
        &((\pp_x-\mi \pp_y)e^{\mi\aa}\phi(u))e^{\mi\bb}\phi(v)\\
        & = e^{\mi(\aa+\bb)}\left(\pp_xu_1+\pp_yu_2+\mi(\pp_xu_2-\pp_yu_1)\right)(v_1+\mi v_2)\\
        & = e^{\mi(\aa+\bb)}\left(\nabla_h \cdot u v_1 + \mi\nabla_h \cdot u v_2+\mi(\pp_xu_2-\pp_yu_1)v_1-(\pp_xu_2-\pp_yu_1)v_2\right)\\
        & = e^{\mi(\aa+\bb)}\left(\nabla_h \cdot u v_1 + \mi\nabla_h \cdot u v_2-\mi v_1\nabla_h \cdot u^{\perp}+v_2\nabla_h \cdot u^{\perp}\right)
    \end{align*}
    and 
    \begin{align*}
        &((\pp_x+\mi \pp_y)e^{-\mi\aa}\overline{\phi(u)})e^{\mi\bb}\phi(v)\\
        & = e^{\mi(\bb-\aa)}\left(\pp_xu_1+\pp_yu_2+\mi(\pp_yu_1-\pp_xu_2)\right)(v_1+\mi v_2)\\
        & = e^{\mi(\bb-\aa)}\left(\nabla_h \cdot u v_1 + \mi\nabla_h \cdot u v_2+\mi(\pp_yu_1-\pp_xu_2)v_1-(\pp_yu_1-\pp_xu_2)v_2\right)\\
        & = e^{\mi(\bb-\aa)}\left(\nabla_h \cdot u v_1 + \mi\nabla_h \cdot u v_2 - \mi v_1\nabla_h ^{\perp}\cdot u+v_2\nabla_h ^{\perp}\cdot u\right). 
    \end{align*}
    The desired identity follows by converting back to vectors through $\phi^{-1}$. By letting \[u(x,y,z) \text{ to be } -\int_0^zu(x,y,\zeta)d\zeta, \text{ and } v =\partial_{z}u\]
    and noting $\nabla_h ^{\perp} \cdot u = -\nabla_h \cdot u^{\perp}$, we also obtain \eqref{e.081105}.

\end{proof}

\section{Bounds on the solutions}
\begin{lemma}[$L^2$ estimates of the primitive equations]\label{l.080901}
    Let $v$ be the strong solution to \eqref{PE-system} with deterministic initial data $v_0\in H^1$ and  $T>0$. Then the following estimates hold.  
    \begin{itemize}
        \item[(i)]  For any $p\geq 2$,
        \begin{align}\label{e.081002}
            \Eb\left[\sup_{t\in[0,T]}\|v(t)\|^p+\left(\int_0^T\|v(r)\|_{1}^2dr\right)^p\right]\leq C(p,\sigma,T,\|v_0\|).
        \end{align}
        \item[(ii)] Let $\tu=e^{\aa Jt}\tv$. For any $p\geq 2$ there is a $q>0$ such that for any $s,t\in[0,T]$ and $\phi\in C^{\infty}$, one has 
        \begin{align*}
            \Eb\left|\langle \bv(t), \phi\rangle-\langle \bv(s), \phi\rangle\right|^p\leq C |t-s|^q,
        \end{align*}
        and  
        \begin{align*}
            \Eb\left|\langle \tu(t), \phi\rangle-\langle \tu(s), \phi\rangle\right|^p\leq C |t-s|^q, 
        \end{align*}
        where $C$ is independent of $\aa$. 
    \end{itemize}
\end{lemma}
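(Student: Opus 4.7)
I would apply It\^o's formula to $\|v(t)\|^2$ for the strong solution of \eqref{PE-system}, exploiting the three standard cancellations of the primitive equations: the nonlinearity vanishes, $\langle v\cdot\nabla_h v + w\pp_z v, v\rangle =0$, because $\nabla_h\cdot v +\pp_z w=0$; the pressure pairing $\langle\nabla_h P,v\rangle$ vanishes after integrating by parts and using $\pp_z P=0$; and the Coriolis contribution $\alpha\langle Jv,v\rangle$ vanishes by antisymmetry of $J$. The resulting energy identity reads
\begin{align*}
    d\|v\|^2 + 2\nu\|\nabla v\|^2\,dt = \|\sigma\|_{L_2(H,H)}^2\,dt + 2\langle v,\sigma\,dW\rangle,
\end{align*}
and is manifestly independent of $\aa$. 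I would next apply It\^o to $\|v\|^{2p}$, invoke Burkholder--Davis--Gundy on the resulting martingale term, and use Young's inequality to absorb $\Eb\sup_{t\leq T}\|v\|^{2p}$; an induction on $p$ then yields the sup-moment bound. The estimate for $\Eb(\int_0^T\|v\|_1^2\,dr)^p$ follows by raising the integrated energy identity to the $p$-th power and controlling the resulting stochastic integral by BDG using the just-established sup-moment bound.

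\textbf{Plan for part (ii).} I would write $\bv(t)-\bv(s)$ and $\tu(t)-\tu(s)$ in integrated form using \eqref{e.080603} and \eqref{e.080606}, respectively, test against $\phi\in C^\infty$, and transfer derivatives onto the smooth test function whenever possible so that every drift pairing becomes at most quadratic in $v$. For the barotropic case every term reduces to a bound of the form $C(\phi)(1+\|v\|^2)$: for instance, $|\langle\bv\cdot\nabla_h\bv,\phi\rangle|\leq\|v\|^2\|\nabla\phi\|_{L^\infty}$ and $|\langle\Delta_h\bv,\phi\rangle|\leq\|v\|\|\Delta_h\phi\|$. Thus the $p$-th moment of the drift increment is $\leq C|t-s|^p$ via (i), while BDG on the Wiener integral gives $C|t-s|^{p/2}$, yielding $q=p/2$. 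For $\tu$ the same scheme applies, but two new features appear: several drift terms carry oscillatory factors $e^{\pm\alpha J r}$, and the term $w(e^{-\alpha J r}\tu)\pp_z\tu$ involves one derivative of $\tu$. The oscillatory factors are harmless via the identity $\langle e^{\mp\alpha J r}F,\phi\rangle=\langle F,e^{\pm\alpha J r}\phi\rangle$ together with the pointwise unitarity of $e^{\pm\alpha J r}$ on every $W^{k,p}$, which preserves all test-function norms. For the derivative-of-$\tu$ term, an integration by parts in $z$ (using $\pp_z w(f)=-\nabla_h\cdot f$) reduces the integrand to an expression bounded by $C(\phi)(\|v\|\|\nabla v\|+\|v\|^2)$; Cauchy--Schwarz in time together with (i) then give a $C|t-s|^{p/2}$ contribution. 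For the stochastic integral $\int_s^t e^{\aa J r}\widetilde\sigma\,dW$, the It\^o isometry gives $\|e^{\aa J r}\widetilde\sigma^*\phi\|=\|\widetilde\sigma^*\phi\|$, so BDG produces an $\aa$-independent $|t-s|^{p/2}$ bound.

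\textbf{Main obstacle.} All the calculations are standard; the only delicate point, essential for the downstream application in \cref{s.121503}, is the $\aa$-independence of the constants. This rests on two structural facts: the antisymmetry of $J$, which eliminates $\aa$ from the energy identity in (i), and the unitarity of $e^{\aa J t}$ on $L^2$ and on every $W^{k,p}$, which eliminates $\aa$ from every drift pairing and from the quadratic variation of the stochastic integral in (ii) after the change of variable $\tu=e^{\aa J t}\tv$.
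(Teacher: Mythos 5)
Your proposal is essentially correct and follows the same route as the paper's proof. Part (i) is the standard $L^2$ energy estimate exploiting the three cancellations you list; the paper applies It\^o directly to $\|v\|^p$ rather than iterating, but the substance is identical. Part (ii) rests, as you say, on the unitarity of $e^{\aa J r}$ in the drift pairings and in the It\^o isometry, and on the integration-by-parts in $z$ via $\pp_z w(f) = -\nabla_h\cdot f$ to tame the term $w(e^{-\aa J r}\tu)\pp_z\tu$.

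One mild difference in technique: where you transfer derivatives onto the smooth test function by hand (yielding bounds like $|\langle\bv\cdot\nabla_h\bv,\phi\rangle|\leq\|v\|^2\|\nabla\phi\|_{L^\infty}$), the paper invokes the trilinear estimate of \cref{l.081001} with $m_1=m_2=0$, $m_3=2$, producing the slightly weaker-looking bound $C\|\phi\|_2\|v\|\|v\|_1$; the paper then splits $\|v\|\|v\|_1 \lesssim \|v\|^3 + \|v\|_1^{3/2}$ by Young and H\"olders the resulting time integral, ending up with exponent $q=p/4$. Your variant keeps the time integral in the form $\int_s^t\|v\|\|v\|_1\,dr$, applies Cauchy--Schwarz in time, and controls the resulting product $\sup\|v\|\cdot(\int\|v\|_1^2)^{1/2}$ by Cauchy--Schwarz in $\omega$ together with (i), yielding the sharper $q=p/2$. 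Both are fine for the stated conclusion, which only asserts the existence of some $q>0$. The only minor gap worth flagging is a small imprecision in your plan for (ii): terms such as $\langle\tv\cdot\nabla_h\bv,\psi\rangle$ cannot be made derivative-free after integrating by parts, since $\tv$ is not horizontally divergence-free; a factor $\nabla_h\cdot\tv$ survives and must be handled exactly like the $w(\tv)\pp_z\tv$ term, via Cauchy--Schwarz in time and the integrated $H^1$ bound from (i). This is consistent with the scheme you describe but deserves to be said explicitly so that the claim of ``at most quadratic in $v$'' is not mistaken for ``involving no derivatives of $v$.''
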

\begin{proof}
    We first prove (i). By It\^o formula, the divergence free property of $(v,w)$ and integration by parts we have 
    \begin{align*}
        &\|v(t)\|^p\notag
        \\
        &=\|v(0)\|^p\notag   - p\int_0^t\langle v\cdot \nabla_h  v+w\pp_z v+\aa v^{\perp}+\nabla_h  P,v \rangle \|v\|^{p-2} dr + \frac{p}{2}\int_0^t\|\sigma\|_{L_2(H,H)}^2\|v\|^{p-2}dr\\
        & \quad -p\nu\int_0^t\|v\|_1^2\|v\|^{p-2}dr + \frac{p(p-2)}{2}\int_0^t\|v\|^{p-4}\|\sigma v\|^2 dr + p\int_0^t\|v\|^{p-2}\left\langle v(r),\sigma dW(r)\right\rangle  \\
        & = \|v(0)\|^p -p\nu\int_0^t\|v\|_1^2\|v\|^{p-2}dr + \frac{p}{2}\int_0^t\|\sigma\|_{L_2(H,H)}^2\|v\|^{p-2}dr\\
        &\quad + \frac{p(p-2)}{2}\int_0^t\|v\|^{p-4}\|\sigma v\|^2 dr + p\int_0^t\|v\|^{p-2}\left\langle v(r),\sigma dW(r)\right\rangle . 
    \end{align*}
    Therefore
    \begin{align*}
        \|v(t)\|^p + p\nu\int_0^t\|v(r)\|_1^2\|v(r)\|^{p-2}dr &\leq \|v(0)\|^p+C(p,\sigma,T) + \int_0^t\|v(r)\|^{p}dr\\
        &\quad +p\int_0^t\|v(r)\|^{p-2}\left\langle v(r),\sigma dW(r)\right\rangle . 
    \end{align*}
    By the Burkholder-Davis-Gundy inequality, we have 
    \begin{align*}
        \Eb\sup_{s\in[0,s]}\left|p\int_0^t\|v(r)\|^{p-2}\langle v(r),\sigma dW(r)\rangle \right|
        &\leq C \Eb\left(\int_0^t\|v(r)\|^{2p-2}\|\sigma\|_{L_2(H,H)}^2dr\right)^{1/2}\\
        &\leq C(p,\sigma,T)+\frac12\Eb \sup_{r\in[0,t]}\|v(r)\|^p. 
    \end{align*}
    Hence 
    \begin{align*}
        \Eb \sup_{r\in[0,t]}\|v(t)\|^p\leq C\|v(0)\|^p+C(p,\sigma,T)+C\Eb\int_0^t\|v(r)\|^{p}dr,
    \end{align*}
    which implies by Gr\"onwall's inequality that 
    \begin{align*}
        \Eb \sup_{r\in[0,t]}\|v{(t)}\|^p\leq C(p,\sigma,T,\|v_0\|). 
    \end{align*}
    Note that we have the energy equality 
    \begin{align*}
        \|v(t)\|^2+2\nu\int_0^t\|v(r)\|_1^2dr=\|v(0)\|^2+\|\sigma\|_{L_2(H,H)}^2t+2\int_0^t\langle v(r), \sigma dW(r)\rangle.
    \end{align*}
    Thus by the Burkholder-Davis-Gundy inequality we have 
    \begin{align*}
        \Eb\left(\int_0^T\|v(t)\|_1^2dr\right)^p\leq C(\|v(0)\|,\sigma,T)+\frac{1}{\nu^p}\Eb\sup_{t\in[0,T]}\left|\int_0^t\left\langle v(r), \sigma dW(r)\right\rangle\right|^p\leq C(p,\sigma,T,\|v_0\|),
    \end{align*}
    as desired. 

    Next we prove (ii). Recall the equation for $(\bv,\tu)$ as in \eqref{e.081001} and \eqref{e.080610}. Test the equation \eqref{e.080610} with $\phi$, we have 
    \begin{align*}
        \langle \bv(t), \phi\rangle& = \langle \bv(0), \phi\rangle -\int_0^t\left\langle\Pc_h \Big(\bv\cdot \nabla_h  \bv + e^{-2\aa Jr}\bP\left(\tu\cdot \nabla_h  \tu - \tu^{\perp}\cdot \nabla_h \tu^{\perp}\right) \Big), \phi \right\rangle dr\\
        &\quad + \int_0^t\langle \nu\DD_h \bv, \phi\rangle + \int_0^t\left\langle\phi, \Pc_h\overline{\sigma}dW(r)\right\rangle. 
    \end{align*}
    \cref{l.081001} with $d=2,m_1=0,m_2=0,m_3=2$ gives 
    \begin{align*}
        \left|\left\langle\Pc_h \Big(\bv\cdot \nabla_h  \bv\Big), \phi \right\rangle\right| =\left|\left\langle \bv\cdot \nabla_h  \bv, \Pc_h\overline{\phi} \right\rangle\right|\leq C\|\phi\|_2\|\bv\|\|\bv\|_1. 
    \end{align*}
    The same lemma with $d=3, m_1=0,m_2=0,m_3=2$ yields 
    \begin{align*}
        \left|\left\langle\Pc_h \Big(e^{-2\aa Jr}\bP\left(\tu\cdot \nabla_h  \tu - \tu^{\perp}\cdot \nabla_h \tu^{\perp}\right) \Big), \phi \right\rangle\right| &=\left|\left\langle\tu\cdot \nabla_h  \tu - \tu^{\perp}\cdot \nabla_h \tu^{\perp}, {\bP}e^{2\aa Jr}\Pc_h\phi \right\rangle\right|\\
        &\leq  C\|\phi\|_2\|\tu\|\|\tu\|_1\leq C\|\phi\|_2\|\tv\|\|\tv\|_1. 
    \end{align*}
    As a result, by Young's inequality and H\"older's inequality, 
    \begin{align*}
        \left|\langle \bv(t), \phi\rangle-\langle \bv(s), \phi\rangle\right|&-\left|\int_s^t\langle\phi, \Pc_h\overline{\sigma}dW(r)\rangle\right|\leq C\|\phi\|_2\int_s^t\|v(r)\|\left(\|v(r)\|_1+1\right)dr\\
        &\leq C\|\phi\|_2\left(\int_s^t\left(\|v(r)\|^3+1\right)dr+\int_0^t\|v(r)\|_1^{3/2}dr\right)\\
        &\leq C\|\phi\|_2\left(|t-s|\sup_{r\in[0,T]}\left(\|v(r)\|^3+1\right)+|t-s|^{\frac14}\left(\int_0^T\|v(r)\|_1^2dr\right)^{\frac34}\right). 
    \end{align*}
    It follows from the Burkholder-Davis-Gundy inequality that 
    \begin{align*}
        \Eb\left|\int_s^t\langle\phi, \Pc_h\overline{\sigma}dW(r)\rangle\right|^p\leq C_p\left(\int_s^t\|\phi\|^2\sum_n\|\sigma_n\|^2dr\right)^{p/2}\leq C|t-s|^{p/2}. 
    \end{align*}
    Then by \eqref{e.081002}, we obtain 
    \begin{align*}
        \Eb\left|\langle \bv(t), \phi\rangle-\langle \bv(s), \phi\rangle\right|^p\leq C |t-s|^q,
    \end{align*}
    with $q = p/4$. 

    Similarly, by testing the equation \eqref{e.081001} for $\tu$ with $\phi$ we have 
    \begin{align*}
        \langle \tu(t), \phi\rangle&=\langle \tu(0), \phi\rangle\\
        &\quad - \int_0^t\left\langle e^{\aa Jr}\left(\tv \cdot \nabla_h  \tv + \tv \cdot \nabla_h  \bv + \bv \cdot \nabla_h  \tv+w(\tv) \partial_z \tv - \bP\Big(\tv \cdot \nabla_h  \tv + (\nabla_h  \cdot \tv) \tv \Big)\right), \phi\right\rangle\\
        &\quad +\langle\nu\Delta\tu,\phi\rangle + \langle\phi,e^{\aa Jt}\widetilde{\sigma}dW(r)\rangle. 
    \end{align*}
    Here we only address the estimate for the term involving $w(\tv) \partial_z \tv$, since the remaining terms can be treated similarly as in estimating $\bv$. Note that 
    \begin{align*}
        w(\tv) \partial_z \tv = \partial_z(\tv w(\tv)) - \tv\partial_zw(\tv) = \partial_z(\tv w(\tv))+\tv\nabla_h \cdot \tv. 
    \end{align*}
    By integration by parts, we obtain 
    \begin{align*}
        \left|\left\langle e^{\aa Jr}w(\tv) \partial_z \tv, \phi\right\rangle\right|
        &\leq \left|\left\langle \tv w(\tv), e^{-\aa Jr}\pp_z\phi\right\rangle\right|+\left|\left\langle \tv(\nabla_h \cdot \tv), e^{-\aa Jr}\phi\right\rangle\right|\\
        &\leq C \|\phi\|_3\|\tv\|\|\tv\|_1. 
    \end{align*}
    Then by the same arguments as above, we obtain the desired inequality for $\tu$ with the same $q=p/4$. 

\end{proof}

\begin{lemma}[Estimates on the intermediate system \eqref{e.080803}]\label{l.092501}
    Let $v_0\in H^{2}, T>0, p\geq 2$ and $V^{\aa}=(\bV^{\aa},\tV^{\aa})$ be the solution to \eqref{e.080803} with initial data $v_0$. Then there is a constant $C=C(T,\|v_0\|_2,p)$ independent of $\aa$ such that 
    \[
        \Eb \left[\sup_{t\in[0,T]}\|V^{\aa}(t)\|_{2}^{p}+\left(\int_0^T\|V^{\aa}(t)\|_{3}^2dt\right)^p\right]\leq C.
    \]
\end{lemma}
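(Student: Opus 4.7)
The argument decouples along the barotropic-baroclinic split built into \eqref{e.080803}. The barotropic equation for $\bV^{\aa}$ is independent of $\tV^{\aa}$ and of $\aa$; it is precisely the 2D stochastic Navier-Stokes equation on $\Tb^2$ with additive noise $\Pc_h\overline{\ss}\partial_tW$, and since $\overline{\ss}\in L_2(H,H^2)$, standard arguments (Itô's formula applied to $\|\bV^{\aa}\|_2^{2p}$, cancellation of the nonlinearity against $\Lambda^2\bV^{\aa}$ via $\nabla_h\cdot\bV^{\aa}=0$ and a commutator estimate, together with the Burkholder-Davis-Gundy inequality and Grönwall) yield $\bV^{\aa}\in L^{p}(\Om;C([0,T];H^2)\cap L^2(0,T;H^3))$ with bounds depending only on $T$, $p$, $\|v_0\|_2$, and $\|\overline{\ss}\|_{L_2(H,H^2)}$. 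This is Step 1.

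Step 2 treats $\tV^{\aa}$, which solves a linear equation in $\tV^{\aa}$ whose coefficients are determined by $\bV^{\aa}$, driven by the noise $e^{\aa Jt}\widetilde{\ss}\partial_tW$. The key uniformity in $\aa$ comes from the isometry property $\|e^{\aa Jt}\widetilde{\ss}\|_{L_2(H,H^2)}=\|\widetilde{\ss}\|_{L_2(H,H^2)}$, so the stochastic term contributes to Itô's formula a quadratic variation uniformly bounded in $\aa$. Applying Itô's formula to $\|\tV^{\aa}\|_2^{2p}$ gives
\begin{align*}
    d\|\tV^{\aa}\|_2^{2p}+2p\nu\|\tV^{\aa}\|_2^{2p-2}\|\tV^{\aa}\|_3^2\,dt &= -2p\|\tV^{\aa}\|_2^{2p-2}\langle\Lambda^2(\bV^{\aa}\cdot\nabla_h\tV^{\aa}),\Lambda^2\tV^{\aa}\rangle\,dt \\
    &\quad -p\|\tV^{\aa}\|_2^{2p-2}\langle\Lambda^2((\tV^{\aa})^\perp(\nabla_h^\perp\cdot\bV^{\aa})),\Lambda^2\tV^{\aa}\rangle\,dt \\
    &\quad + [\text{lower-order and martingale terms}].
\end{align*}
The nonlinear integrals are estimated via the Kato-Ponce inequality from \cref{l.080801} together with the fact that $\bV^{\aa}$ is $z$-independent, so its $L^{\infty}(\Tb^3)$ and $W^{1,\infty}(\Tb^3)$ norms reduce to 2D norms bounded by $\|\bV^{\aa}\|_2$ and $\|\bV^{\aa}\|_3$ (via 2D Sobolev embeddings and interpolation). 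After applying Young's inequality to absorb a small multiple of $\|\tV^{\aa}\|_2^{2p-2}\|\tV^{\aa}\|_3^2$ into the dissipation, one obtains an inequality of the form
\begin{align*}
    d\|\tV^{\aa}\|_2^{2p}+p\nu\|\tV^{\aa}\|_2^{2p-2}\|\tV^{\aa}\|_3^2\,dt\leq C\bigl(1+\|\bV^{\aa}\|_2^2+\|\bV^{\aa}\|_3^2\bigr)\|\tV^{\aa}\|_2^{2p}\,dt+C\,dt+dM_t.
\end{align*}

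Taking expectation, applying Grönwall's inequality, and using Step 1 to control $\Eb\exp(C\int_0^T(\|\bV^{\aa}\|_2^2+\|\bV^{\aa}\|_3^2)\,ds)$ (or, alternatively, using a stopping-time argument and Hölder's inequality together with the polynomial moments from Step 1) yields the required moment bound on $\Eb\sup_{[0,T]}\|\tV^{\aa}\|_2^{2p}$ and on $\Eb(\int_0^T\|\tV^{\aa}\|_3^2\,dt)^p$ after a final Burkholder-Davis-Gundy estimate on the martingale $M$. Combining with Step 1 gives the bound on $V^{\aa}=(\bV^{\aa},\tV^{\aa})$ with a constant independent of $\aa$.

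The main obstacle is the pathwise Grönwall closure: the coefficient $\|\bV^{\aa}\|_3^2$ is only known to have polynomial, not exponential, moments in $\aa$-uniform norm. This is handled by working with the Itô formula for $\|\tV^{\aa}\|_2^{2p}$ directly (so that the Grönwall factor acts on a moment rather than on a random quantity) and estimating via Hölder's inequality against the bounds from Step 1; the $z$-independence of $\bV^{\aa}$ is exploited throughout to avoid any loss from the standard 3D Sobolev embeddings that would otherwise force control of higher norms of $\tV^{\aa}$.
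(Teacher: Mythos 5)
Your overall decomposition (barotropic $\bV^{\aa}$ as a closed 2D Navier-Stokes system, then $\tV^{\aa}$ as a linear equation with coefficients driven by $\bV^{\aa}$), and the key observation that $\|e^{\aa Jt}\widetilde{\ss}\|_{L_2(H,H^k)}=\|\widetilde{\ss}\|_{L_2(H,H^k)}$ so the martingale and quadratic-variation terms are $\aa$-uniform, match the paper's strategy. However, there is a gap in the crucial closure step for the baroclinic $H^2$ estimate.

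The inequality you arrive at,
\begin{align*}
d\|\tV^{\aa}\|_2^{2p}+p\nu\|\tV^{\aa}\|_2^{2p-2}\|\tV^{\aa}\|_3^2\,dt\leq C\bigl(1+\|\bV^{\aa}\|_2^2+\|\bV^{\aa}\|_3^2\bigr)\|\tV^{\aa}\|_2^{2p}\,dt+C\,dt+dM_t,
\end{align*}
has the structure of a linear Gr\"onwall inequality with a random, time-dependent coefficient multiplying the quantity being estimated. As you correctly note, the pathwise Gr\"onwall route requires $\Eb\exp\bigl(C\int_0^T\|\bV^{\aa}\|_3^2\,dt\bigr)$, which is not available (the $H^2$-level enstrophy balance of 2D Navier-Stokes does not admit the cancellation that gives exponential moments at the lower levels). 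But the alternative you propose does not close either: taking expectation first produces $\Eb\bigl[(1+\|\bV^{\aa}\|_2^2+\|\bV^{\aa}\|_3^2)\|\tV^{\aa}\|_2^{2p}\bigr]$, and any H\"older splitting of this product puts a strictly higher moment $\Eb\|\tV^{\aa}\|_2^{2pq'}$ (with $q'>1$) on the right, which is precisely what is unknown. The argument is circular.

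The paper avoids this by never arriving at a Gr\"onwall inequality with a random coefficient multiplying the top-order norm. It proceeds through an intermediate $H^1$ estimate of $\tV^{\aa}$ (a step your sketch omits) and, at each level, applies Young's inequality so as to bound the nonlinear integrand by
\begin{align*}
\ee\sup_{t\in[0,T]}\|\tV^{\aa}\|_2^p + C_\ee\sup_{t\in[0,T]}\|\tV^{\aa}\|_1^p\,\Bigl(\|\bV^{\aa}\|_2^{2p}+\|\bV^{\aa}\|_2^{4p}+\|\bV^{\aa}\|_1^{8p}\Bigr) - \nu\int_0^t\|\LL^2\tV^{\aa}\|^{p-2}\|\tV^{\aa}\|_3^2\,dr.
\end{align*}
The point is that the random polynomial of $\|\bV^{\aa}\|$ multiplies the lower-order norm $\|\tV^{\aa}\|_1^p$, which is already controlled, while the top-order norm $\|\tV^{\aa}\|_2^p$ appears only in an $\ee$-small piece that can be absorbed to the left of the inequality after taking the supremum in time. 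Then a single Cauchy-Schwarz in $\omega$, against the polynomial moment bounds already established for $\sup_t\|\tV^{\aa}\|_1^{2p}$ and $\sup_t\|\bV^{\aa}\|_2^{8p}$, closes the estimate with no exponential moments and no stopping times. Your sketch needs both the intermediate $H^1(\tV^{\aa})$ step and this decoupling structure to be a valid proof.
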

\begin{proof}
    In the proof we drop the superscript $\aa$ for convenience. The proof is divided into the following four steps.
    
    \subsubsection*{Step 1: $L^2$ estimates}
    By It\^o formula and equation \eqref{e.080803} we have 
    \begin{align}\label{e.092403}
        \begin{split}
            \|\bV(t)\|^p &= \|\bV(0)\|^p-p\nu \int_0^t\|\nabla_h  \bV(r)\|^2\|\bV(r)\|^{p-2}dr+ \frac{p}{2}\|\mathcal P_h\overline{\ss}\|_{L_2(H,H)}^2\int_0^t\|\bV(r)\|^{p-2}dr\\
            &+\frac{p(p-2)}{2}\int_0^t\|\bV(r)\|^{p-4}\|\mathcal P_h\overline{\ss} \bV(r)\|^2 dr + p\int_0^t\|\bV(r)\|^{p-2}\left\langle \bV(r),\mathcal P_h\overline{\ss} dW(r)\right\rangle , \\
            \|\tV(t)\|^p& = \|\tV(0)\|^p-p\nu \int_0^t\|\nabla  \tV(r)\|^2\|\tV(r)\|^{p-2}dr +\frac{p}{2}\|\widetilde{\ss}\|_{L_2(H,H)}^2\int_0^t\|\tV(r)\|^{p-2}dr\\
            &+\frac{p(p-2)}{2}\sum_{k}\int_0^t\|\tV(r)\|^{p-4} \|e^{\aa Jr} \widetilde{\ss}\tV(r)\|^2 dr+ p\int_0^t\|\tV(r)\|^{p-2}\left\langle \tV(r),e^{\aa Jr} \widetilde{\ss} dW(r)\right\rangle ,
        \end{split}
    \end{align}
    where we have used the cancellation $\langle\bV\cdot\nabla_h  \bV,\bV \rangle = 0$ and $\langle\bV\cdot\nabla_h  \tV,\tV \rangle = 0$ since $\bV$ is divergence free, and the cancellation $\langle \tV^{\perp}\nabla_h ^{\perp}\cdot\bV, \tV\rangle $. By the same procedure as in deriving \eqref{e.081002}, we obtain from the equation \eqref{e.092403} that 
    \begin{align}\label{e.092402}
        \Eb\left[\sup_{t\in[0,T]}\|V(t)\|^{p}+\left(\int_0^T\|V(t)\|_1^2dt\right)^p\right]\leq C. 
    \end{align}

    \subsubsection*{Step 2: $H^1$ and $H^2$ estimates of $\bV$} Let $k\geq 1$. By It\^o's formula, from the equation of $\bV$ in \eqref{e.080803} we have 
    \begin{align*}
        \|\LL^k\bV(t)\|^p&= \|\LL^k\bV(0)\|^p+p \int_0^t\langle \LL^k(-\bV\cdot\nabla_h \bV+\nu\DD_h\bV), \LL^k\bV\rangle \|\LL^k\bV\|^{p-2}dr\\
        &+ \frac{p}{2}\|\mathcal P_h\overline{\ss}\|_{L_2(H,H^k)}^2\int_0^t\|\LL^k\bV\|^{p-2}dr+\frac{p(p-2)}{2}\int_0^t\|\LL^k\bV\|^{p-4}\|\mathcal P_h\overline{\ss}\LL^{2k}\bV\|^2dr\\
        & + p\int_0^t\|\LL^k\bV\|^{p-2}\left\langle \LL^k\bV,\LL^k\mathcal P_h\overline{\ss} dW(r)\right\rangle 
    \end{align*}
    For $k=1$ one has the cancellation $\langle\bV\cdot\nabla_h  \bV, \DD_h\bV\rangle=0$ on $\mathbb T^2$. Hence by the same proof as \eqref{e.081002}, one has 
    \begin{align}\label{e.092501}
        \Eb\left[\sup_{t\in[0,T]}\|\bV(t)\|_1^{p}+\left(\int_0^T\|\bV(t)\|_2^2dt\right)^p\right]\leq C. 
    \end{align}
    For $k=2$ there is no such cancellation. To deal with the nonlinear term, we apply \cref{l.080801} and divergence free property of $\bV$ to obtain 
    \begin{align*}
        |\langle \LL^2(\bV\cdot\nabla_h \bV),\LL^2\bV\rangle|&=|\langle \LL^2(\bV\cdot\nabla_h \bV)-\bV\cdot\nabla_h \LL^2\bV,\LL^2\bV\rangle|\\
        &\lesssim \|\LL^2\bV\|_{L^4}\|\nabla_h \bV\|_{L^{4}}\|\LL^2\bV\|\lesssim \|\bV\|_{1}^{3/2}\|\bV\|_{3}^{3/2}
    \end{align*}
    by interpolation inequalities. Therefore, by Young's inequality, we have 
    \begin{align*}
        &p \int_0^t\langle \LL^2(-\bV\cdot\nabla_h \bV+\nu\DD_h\bV), \LL^2\bV\rangle \|\LL^2\bV\|^{p-2}dr\\
        &\leq C\int_0^t\|\bV\|_{1}^{3/2}\|\bV\|_{3}^{3/2}\|\LL^2\bV\|^{p-2}dr-p\nu \int_0^t\|\LL^3\bV\|^2\|\LL^2\bV\|^{p-2}dr\\
        &\leq C\int_0^t\|\bV\|_{1}^{6}\|\LL^2\bV\|^{p-2}dr-\frac{p\nu}{2} \int_0^t\|\LL^3\bV\|^2\|\LL^2\bV\|^{p-2}dr\\
        &\leq \ee \sup_{t\in[0,T]}\|\LL^2\bV\|^{p}+C(\ee)\sup_{t\in[0,T]}\|\bV\|_{1}^{3p}-\frac{p\nu}{2} \int_0^t\|\LL^3\bV\|^2\|\LL^2\bV\|^{p-2}dr
    \end{align*}
    for any $\ee>0$. Combining this with the $H^1$ estimate \eqref{e.092501} of $\bV$ and applying Gr\"onwall's inequality as in deriving \eqref{e.081002}, we obtain 
    \begin{align}\label{e.092502}
        \Eb\left[\sup_{t\in[0,T]}\|\bV(t)\|_2^{p}+\left(\int_0^T\|\bV(t)\|_3^2dt\right)^p\right]\leq C. 
    \end{align}

    \subsubsection*{Step 3: $H^1$ estimate of $\tV$} 
    From equation \eqref{e.080803} for $\tV$ we have 
    \begin{align*}
        \|\LL\tV(t)\|^p &= \|\LL\tV(0)\|^p + p\int_0^t\|\LL\tV\|^{p-2}\langle - \bV\cdot\nabla_h \tV-\frac12\tV^{\perp}\nabla_h ^{\perp}\cdot\bV+\nu\DD\tV, \LL^2\tV\rangle dr\\
        & + \frac{p}{2}\|\widetilde{\ss}\|_{L_2(H,H^1)}^2\int_0^t\|\LL\tV\|^{p-2}dr + \frac{p(p-2)}{2}\int_0^t\|\LL\tV\|^{p-4}\|\widetilde{\ss}\LL^{2}\tV\|^2dr\\
        &+  p\int_0^t\|\LL\bV\|^{p-2}\left\langle \LL\tV,\LL e^{\aa Jr}\widetilde{\ss} dW(r)\right\rangle .
    \end{align*}
    Since $\bV$ is divergence free, by integration by parts we have 
    \begin{align*}
        \left|\langle\bV\cdot\nabla_h \tV, \LL^2\tV \rangle\right|&= \left|\langle \bV\cdot\nabla_h\tV,\Delta_h\tV\rangle+\langle \bV\cdot\nabla_h\tV,\partial_z^2\tV\rangle \right| = \left|\langle \bV\cdot\nabla_h\tV,\Delta_h\tV\rangle \right|
        \\
        &=\left|\langle\nabla_h \tV\nabla_h \bV, \nabla_h \tV \rangle\right|\leq \|\nabla_h \tV\|_{L^4}^2\|\nabla_h \bV\|\lesssim \|\tV\|_{7/4}^2\|\nabla_h \bV\|,
    \end{align*}
    where we used the Sobolev embedding $H^{3/4}\subset L^4$. By the interpolation $\|\tV\|_{7/4}\lesssim \|\tV\|^{\frac18}\|\tV\|_{2}^{\frac78}$ and Young's inequality, we deduce 
    \begin{align*}
        \left|\langle\bV\cdot\nabla_h \tV, \LL^2\tV \rangle\right|\leq C_{\ee} \|\tV\|^2\|\bV\|_1^8+\ee\|\tV\|_{2}^2
    \end{align*}
    for any $\ee>0$. 

    By integration by parts and orthogonality of $\tV^{\perp}$ with $\tV$, one has 
    \begin{align*}
        |\langle\tV^{\perp}\nabla_h ^{\perp}\cdot\bV, \LL^2\tV\rangle|& = |\langle\tV^{\perp}\nabla_h \left(\nabla_h ^{\perp}\cdot\bV\right), \nabla_h \tV\rangle|\\
        &\lesssim \|\tV\|_{L^4}\|\nabla_h \tV\|_{L^4}\|\bV\|_{2}\\
        &\lesssim \|\tV\|^{\frac14}\|\tV\|_{1}^{3/4}\|\tV\|_{1}^{1/4}\|\tV\|_{2}^{3/4}\|\bV\|_{2}\lesssim \|\tV\|^{\frac14}\|\tV\|_{2}^{7/4}\|\bV\|_{2}
    \end{align*}
    by Ladyzhenskaya's inequality in 3D. Applying Young's inequality we obtain 
    \begin{align*}
        |\langle\tV^{\perp}\nabla_h ^{\perp}\cdot\bV, \LL^2\tV\rangle|\lesssim C_{\ee} \|\tV\|^2\|\bV\|_2^8+\ee\|\tV\|_{2}^2. 
    \end{align*}
    Therefore, we have 
    \begin{align*}
        &p\int_0^t\|\LL\tV\|^{p-2}\langle - \bV\cdot\nabla_h \tV-\frac12\tV^{\perp}\nabla_h ^{\perp}\cdot\bV+\nu\DD\tV, \LL^2\tV\rangle dr\\
        &\leq C\int_0^t\|\LL\tV\|^{p-2}\|\tV\|^2\|\bV\|_2^8dr -\frac{p\nu}{2}\int_0^t\|\LL\tV\|^{p-2}\|\LL^2\tV\|^2dr\\
        &\leq C(\ee)\sup_{t\in[0,T]}\|\tV\|^p\|\bV\|_2^{4p} + \ee\sup_{t\in[0,T]}\|\LL\tV\|^{p} -\frac{p\nu}{2}\int_0^t\|\LL\tV\|^{p-2}\|\LL^2\tV\|^2dr 
    \end{align*}
    for any $\ee>0$. Combining this with the $H^2$ estimate \eqref{e.092502} of $\bV$ and $L^2$ estimate \eqref{e.092402} of $\tV$, and applying Gr\"onwall's inequality as in deriving \eqref{e.081002}, we obtain 
    \begin{align}\label{e.092503}
        \Eb\left[\sup_{t\in[0,T]}\|\tV(t)\|_1^{p}+\left(\int_0^T\|\tV(t)\|_2^2dt\right)^p\right]\leq C. 
    \end{align}

\subsubsection*{Step 4: $H^2$ estimate of $\tV$} Again from equation \eqref{e.080803} for $\tV$ we have 
    \begin{align}\label{e.092504}
       \begin{split}
            \|\LL^2\tV(t)\|^p &= \|\LL^2\tV(0)\|^p + p\int_0^t\|\LL^2\tV\|^{p-2}\langle - \bV\cdot\nabla_h \tV-\frac12\tV^{\perp}\nabla_h ^{\perp}\cdot\bV+\nu\DD\tV, \LL^4\tV\rangle dr\\
            &\quad + \frac{p}{2}\|\widetilde{\ss}\|_{L_2(H,H^1)}^2\int_0^t\|\LL^2\tV\|^{p-2}dr + \frac{p(p-2)}{2}\int_0^t\|\LL^2\tV\|^{p-4}\|\widetilde{\ss}\LL^4\tV\|^2 dr\\
            &\quad+  p\int_0^t\|\LL^2\bV\|^{p-2}\left\langle \LL^2\tV,\LL^2 e^{\aa Jr}\widetilde{\ss} dW(r)\right\rangle ,
       \end{split}
    \end{align}
    where we only need to deal with the terms involving $\bV$. By integration by parts, H\"older's inequality, Sobolev embedding $H^{3/4}\subset L^4$, interpolation followed by Young's inequality, we deduce 
    \begin{align*}
        |\langle\tV^{\perp}\nabla_h ^{\perp}\cdot\bV, \LL^4\tV\rangle|&\leq \|\nabla \tV\|_{L^4}\|\nabla_h \bV\|_{L^4}\|\tV\|_{3}+\|\tV\|_{L^{\infty}}\|\bV\|_{2}\|\tV\|_3\\
        &\lesssim \|\tV\|_{2}\|\bV\|_1^{1/2}\|\bV\|_2^{1/2}\|\tV\|_3+ \|\tV\|_{2}\|\bV\|_2\|\tV\|_3\\
        &\lesssim \|\tV\|_1^{1/2}\|\bV\|_2\|\tV\|_3^{3/2}\\
        &\leq C_{\ee}\|\tV\|_1^{2}\|\bV\|_2^4 + \ee \|\tV\|_3^{2}
    \end{align*}
    for any $\ee>0$. In addition, by integration by parts, H\"older's inequality, Ladyzhenskaya's inequality and interpolation followed by Young's inequality, one has  
    \begin{align*}
        \left|\langle\bV\cdot\nabla_h \tV, \LL^4\tV \rangle\right|&=\left|\langle \LL(\bV\cdot\nabla_h \tV), \LL^3\tV \rangle\right|\\
        &\lesssim\|\tV\|_3\left(\|\nabla_h \tV\|_{L^4}\|\LL\bV\|_{L^4}+\|\LL\nabla_h \tV\|_{L^4}\|\bV\|_{L^4}\right)\\
        &\lesssim \|\tV\|_{3}\left(\|\tV\|_1^{1/4}\|\tV\|_2^{3/4}\|\bV\|_2+ \|\tV\|_2^{1/4}\|\tV\|_3^{3/4}\|\bV\|_1\right)\\
        &\lesssim \|\tV\|_3^{7/4}\|\tV\|_1^{1/4}\|\bV\|_2+\|\tV\|_{1}^{1/8}\|\tV\|_{3}^{1/8}\|\tV\|_{3}^{7/4}\|\bV\|_1\\
        &\lesssim C_{\ee}\|\tV\|_1^2\|\bV\|_2^8+C_{\ee}\|\tV\|_1^2\|\bV\|_1^{16}+\ee\|\tV\|_3^2, 
    \end{align*}
    for any $\ee>0$. Consequently, 
    \begin{align*}
        &p\int_0^t\|\LL^2\tV\|^{p-2}\langle - \bV\cdot\nabla_h \tV-\frac12\tV^{\perp}\nabla_h ^{\perp}\cdot\bV+\nu\DD\tV, \LL^4\tV\rangle dr\\
        &\leq C(\ee)\sup_{t\in[0,T]}\|\tV\|_1^p\left(\|\bV\|_2^{2p}+\|\bV\|_2^{4p}+\|\bV\|_1^{8p}\right)+\ee\sup_{t\in [0,T]}\|\LL^2\tV\|^p - \nu\int_0^t\|\LL^2\tV\|^{p-2}\|\tV\|_3^2dr,
    \end{align*}
    which combined with \eqref{e.092501}, \eqref{e.092502}, \eqref{e.092503} and \eqref{e.092504} implies 
    \begin{align*}
        \Eb\left[\sup_{t\in[0,T]}\|\tV(t)\|_2^{p}+\left(\int_0^T\|\tV(t)\|_3^2dt\right)^p\right]\leq C, 
    \end{align*}
    by a Gr\"onwall inequality argument. 

\end{proof}

\begin{lemma}[Estimates on the limit resonant system]\label{l.120101}
    Let $V=(\bV,\tV)$ be a solution to the equation \eqref{e.081201} with initial data $V_0\in H$ and $T>0$. Then the following estimates hold. 
    \begin{enumerate}
        \item There exists $\eta_0>0$ and a constant $C=C(T)$ such that 
        \begin{align}\label{e.120106}
        \sup_{t\in[0,T]}\Eb\exp\left(\eta\nu\int_0^t\|\nabla  V(r)\|^2 dr\right)\leq C\exp(\eta\|V_0\|^2),
        \end{align}
        for all $\eta\in (0,\eta_0]$.
        \item For any $\eta>0$, there exists a constant $C=C(\eta,T)$ such that 
        \begin{align}\label{e.120107}
            \Eb\sup_{t\in[0,T]}\left(t^4\|V(t)\|_2^2+\int_0^tr^4\|V(r)\|_{3}^2dr\right)\leq C\exp(\eta\|V_0\|^2). 
        \end{align}
    \end{enumerate}  
\end{lemma}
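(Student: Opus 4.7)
The plan for \textbf{part (1)} is to apply the classical exponential supermartingale technique. Applying It\^o's formula to $\|V(t)\|^2$ and using the cancellations $\langle\bV\cdot\nabla_h\bV,\bV\rangle=0$, $\langle\bV\cdot\nabla_h\tV,\tV\rangle=0$ (from $\nabla_h\cdot\bV=0$), and $\langle\tV^\perp(\nabla_h^\perp\cdot\bV),\tV\rangle=0$ (pointwise orthogonality), one obtains the energy identity
\begin{align*}
\|V(t)\|^2+2\nu\int_0^t\|\nabla V\|^2\,ds=\|V_0\|^2+\|G\|_{L_2(H,H)}^2 t+2M_t,
\end{align*}
where $M_t:=\int_0^t\langle V,GdW\rangle$ has $d[M]_t\le\|G\|^2\|V\|^2\,dt$. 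Rearranging, one writes
\begin{align*}
\exp\!\Big(\eta\|V(t)\|^2+\eta\nu\int_0^t\|\nabla V\|^2 ds-\eta\|G\|^2 t\Big)=e^{\eta\|V_0\|^2}\mathcal{E}_t R_t,
\end{align*}
where $\mathcal{E}_t:=\exp(2\eta M_t-2\eta^2[M]_t)$ is a nonnegative local martingale (hence supermartingale with $\Eb\mathcal{E}_t\le1$), and $R_t:=\exp(2\eta^2[M]_t-\eta\nu\int_0^t\|\nabla V\|^2 ds)$. Poincar\'e gives $[M]_t\le\lambda_1^{-1}\|G\|^2\int_0^t\|\nabla V\|^2 ds$, so $R_t\le1$ provided $\eta\le\eta_0:=\nu\lambda_1/(2\|G\|^2)$; taking expectations and dropping the $\|V(t)\|^2$ term on the left yields (1).

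The plan for \textbf{part (2)} is a two-step bootstrap from $L^2$ to $H^2$ via weighted It\^o estimates, using parabolic smoothing through $t$-weights to absorb the low regularity $V_0\in H$, and leveraging the exponential moment from (1) (for the fixed small $\eta_0$) to close the resulting Gronwall arguments. The first step works at the $H^1$ level with weight $t^2$. Applying It\^o to $t^2\|V\|_1^2$ produces the dissipation $-2\nu t^2\|V\|_2^2$, the weight-derivative term $2t\|V\|_1^2$, and nonlinear contributions. The 2D structure of the $\bV$-equation allows the Ladyzhenskaya bound $|\langle\bV\cdot\nabla\bV,\Delta\bV\rangle|\lesssim\|\bV\|^{1/2}\|\nabla\bV\|\|\Delta\bV\|^{3/2}$, whose Young splitting leaves Gronwall exponent $\lesssim\|\bV\|^2\|\nabla\bV\|^4$. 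For the linear-in-$\tV$ equation, the transport $\bV\cdot\nabla\tV$ and stretching $\tV^\perp(\nabla_h^\perp\cdot\bV)$ are controlled via anisotropic H\"older estimates exploiting that $\bV$ depends only on horizontal variables (along the lines of \eqref{e.121504}--\eqref{e.121505}), giving Gronwall exponent $\lesssim\|\nabla V\|^2+\|\nabla V\|^4$. Integrating, taking expectation with BDG applied to the martingale, and applying Cauchy-Schwarz together with the exponential moment (1) yields $\Eb\sup_{[0,T]}t^2\|V\|_1^2+\Eb\int_0^T r^2\|V\|_2^2 dr\le C(1+\|V_0\|^p)$ for some fixed $p$.

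The second step repeats the scheme at the $H^2$ level with weight $t^4$. It\^o on $t^4\|V\|_2^2$ produces dissipation $-2\nu t^4\|V\|_3^2$, weight-derivative $4t^3\|V\|_2^2$ (controlled by the previous step), and nonlinear terms. The commutator estimates of \cref{l.080801} give, e.g., $|\langle\LL^2(\bV\cdot\nabla\bV),\LL^2\bV\rangle|\lesssim\|\bV\|_1^{1/2}\|\bV\|_2\|\bV\|_3^{3/2}$ in 2D, and analogous anisotropic estimates for $\langle\LL^2(\bV\cdot\nabla\tV),\LL^2\tV\rangle$ and $\langle\LL^2(\tV^\perp(\nabla_h^\perp\cdot\bV)),\LL^2\tV\rangle$ (again exploiting the 2D nature of $\bV$). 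Young's inequality absorbs the top-order $\nu t^4\|V\|_3^2$; combining with the weighted $H^1$ bound from the previous step and applying Gronwall gives $\Eb\sup_{[0,T]}(t^4\|V\|_2^2+\int_0^T r^4\|V\|_3^2 dr)\le C(1+\|V_0\|^{p'})$. The elementary inequality $1+x^{p'}\le C_\eta e^{\eta x^2}$, valid for any $\eta>0$ and $x\ge0$, applied with $x=\|V_0\|$, converts this polynomial bound into the claimed $C(\eta,T)\exp(\eta\|V_0\|^2)$.

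The principal technical obstacle is the systematic management of the couplings in the $\tV$-equation at the $H^2$ level, where naive 3D Sobolev embeddings lose too many derivatives. The resolution is the anisotropic splitting that exploits the 2D-in-$(x,y)$ structure of $\bV$, which, combined with the parabolic smoothing encoded by the $t^4$ weight, allows one to close the estimate using only $V_0\in H$.
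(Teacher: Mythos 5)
Your proof of part (1) is correct, though it takes a somewhat cleaner route than the paper: you use the fact that the stochastic exponential $\mathcal{E}_t=\exp(2\eta M_t-2\eta^2[M]_t)$ is a nonnegative local martingale (hence a supermartingale with $\Eb\mathcal{E}_t\le1$) directly, whereas the paper applies the exponential martingale inequality to get a tail bound $\Pb(\cdots\ge K)\le e^{-K}$ and then converts that to a moment bound. Both give the same $\eta_0$-restriction via the Poincar\'e absorption of $[M]_t$ into the dissipation.

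Part (2), however, has a genuine gap. Your plan is to close weighted $H^1$ and $H^2$ estimates by Gr\"onwall, claiming the Gr\"onwall exponent is $\lesssim\int(\|\nabla V\|^2+\|\nabla V\|^4)\,ds$, and then to control the Gr\"onwall exponential using part (1). This cannot work for two reasons. First, (1) only yields the exponential moment of $\nu\int\|\nabla V\|^2\,ds$ at a \emph{fixed small} rate $\eta_0$; it says nothing about $\int\|\nabla V\|^4$, nor about $\exp(c\int\|\nabla V\|^2)$ for an arbitrary constant $c$ produced by Young splittings. Second — and more fundamentally — the nonlinear terms in the $\tV$-equation at the $H^1$ and $H^2$ levels do \emph{not} give a Gr\"onwall coefficient of the form $\|\nabla V\|^2$. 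For instance, $|\langle\bV\cdot\nabla_h\tV,\LL^2\tV\rangle|$ splits (after Young) into $\frac{\nu}{8}\|\tV\|_2^2$ plus a coefficient like $\|\bV\|^{4/3}\|\bV\|_1^{8/3}$ or $\|\bV\|\,\|\bV\|_2$ (depending on the interpolation route) multiplying $\|\tV\|_1^2$; every such route forces either a power of $\|\nabla\bV\|$ strictly larger than $2$ or an appearance of $\|\bV\|_2$, and there is no exponential moment available for $\int\|\bV\|_1^{8/3}\,ds$ or $\int\|\bV\|_2\,ds$. The paper circumvents Gr\"onwall entirely at these levels: it introduces the weighted functionals $Y_\phi(k,t)=t^k\|\phi(t)\|_k^2+\nu\int_0^t r^k\|\phi\|_{k+1}^2\,dr$, absorbs each $r$-power into the appropriate $Y_{\bV}(k,\cdot)$ or $Y_V(0,\cdot)$ factor (e.g.\ $r^2\|\bV\|^{4/3}\|\bV\|_1^{8/3}\|\nabla\tV\|^2 = r^{2/3}\|\bV\|^{4/3}\cdot(r\|\bV\|_1^2)^{4/3}\cdot\|\nabla\tV\|^2$), bounds moments of $Y_{\bV}$ by citing the 2D Navier--Stokes result from Kuksin--Shirikyan, and controls the martingale by the exponential martingale inequality rather than BDG plus Cauchy--Schwarz. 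Your outline is missing precisely this absorption mechanism, and without it the Gr\"onwall step cannot be closed.
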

\begin{proof}    
        The proof of  \eqref{e.120106}  is similar to the corresponding estimate for the 2D Navier-Stokes equation. Recall that 
        \[
            {\bP}G = \Pc_h\overline{\ss}, \quad (I-\bP)G = \sqrt{\widetilde{Q}}.
        \]
        Due to the cancellation property of the nonlinear terms, by It\^o's formula we have 
        \begin{align}\label{e.121507}
            \|V(t)\|^2 + 2\nu\int_0^t\|\nabla  V\|^2dr = \|V(0)\|^2 + t\|G\|_{L_{2}(H,H)}^2 + 2\int_0^t\langle V, GdW(r)\rangle,
        \end{align}
        Denote $M(t)= 2\int_0^t\langle V, GdW(r)\rangle$ the martingale term and $[M](t)$ its quadratic variation. Then we can rewrite the above equation as 
        \begin{align*}
            &\|V(t)\|^2 + \nu\int_0^t\|\nabla  V(r)\|^2dr - t\|G\|_{L_{2}(H,H)}^2\\
            &\quad = \|V(0)\|^2 - \nu\int_0^t\|\nabla  V(r)\|^2dr + \frac{\nu\gg}{2}[M](t) +  M(t)-\frac{\nu\gg}{2}[M](t),
        \end{align*}
        where $\gg = \frac{\lambda_1}{2\|G\|_{L_{2}(H,H)}^2}$, and $\lambda_1$ is the first eigenvalue of $-\Delta$ on $L^2(\mathbb T^3; \mathbb R^2)$. Note that by Poincar\'e's inequality, 
        \begin{align*}
            -\nu\int_0^t\|\nabla V(r)\|^2dr + \frac{\nu\gg}{2}[M](t)&\leq - \nu\int_0^t\|\nabla  V(r)\|^2dr + 4\|G\|_{L_{2}(H,H)}^2\frac{\nu\gg}{2}\int_0^t\|V(r)\|^2dr\leq 0. 
        \end{align*}
        Therefore, 
        \begin{align*}
            \|V(t)\|^2 + \nu\int_0^t\|\nabla  V(r)\|^2dr - t\|G\|_{L_{2}(H,H)}^2 -\|V(0)\|^2\leq  M(t)-\frac{\nu\gg}{2}[M](t).
        \end{align*}
        For any $p>1$, let $\eta_0=\frac{\nu\gg}{p}$. By the exponential martingale inequality, we derive
        \begin{align}\label{e.120108}
            \begin{split}
                &\Pb\left(\exp\left(\eta_0\|V(t)\|^2 + \nu\eta_0\int_0^t\|\nabla  V(r)\|^2dr - t\eta_0\|G\|_{L_{2}(H,H)}^2 -\eta_0\|V(0)\|^2\right)\geq e^{\frac{K}{p}}\right)\\
                &=\Pb\left(\|V(t)\|^2 + \nu\int_0^t\|\nabla V(r)\|^2dr - t\|G\|_{L_{2}(H,H)}^2 -\|V(0)\|^2\geq \frac{K}{\nu\gg}\right)\\
                &\leq \Pb\left(\sup_{t\geq 0 }\left(\nu\gg M(t)-\frac{(\nu\gg)^2}{2}[M](t)\right)\geq K\right)\leq e^{-K}.
            \end{split} 
        \end{align}
        Since for any non-negative random variable $X$ satisfying $\Pb(X\geq R)\leq R^{-p}$ for any $R\geq 1$, one has $\Eb X\leq C(p)$, by estimate \eqref{e.120108}, we derive 
        \begin{align*}
            \Eb\exp\left(\eta_0\|V(t)\|^2 + \nu\eta_0\int_0^t\|\nabla  V(r)\|^2dr - t\eta_0\|G\|_{L_{2}(H,H)}^2 -\eta_0\|V(0)\|^2\right)\leq C
        \end{align*}
        for any $t\geq 0$, which in turn implies \eqref{e.120106} by invoking Jensen's inequality.

        One can also prove that 
        \begin{align}\label{e.120308}
            \Eb\exp\left(\eta_0\|V(t)\|^2\right)\leq C\exp\left(\eta_0\|V_0\|^2\right)
        \end{align}
        for some constant $C>0$ independent of times $t$ and $T$, see for example \cite{kuksin2012mathematics}. 

        \vskip0.1in 
        Now we prove \eqref{e.120107}. Denote by
        \begin{align*}
            Y_{\phi}(k,t) =t^k\|\phi(t)\|_k^2 + \nu\int_0^tr^k\|\phi(r)\|_{k+1}^2dr.  
        \end{align*}
        Since the $\bV$ satisfies the 2D Navier-Stokes equation, the corresponding estimate follows from \cite[Proposition 2.4.12]{kuksin2012mathematics}: For any integer $k, m\geq 1$ and $T\geq 1$ there is a constant $C=C(k,m,T)>0$ such that 
        \begin{align}\label{e.120303}
            \Eb\sup_{t\in[0,T]}Y_{\bV}(k,t)^m\leq C(1+\|\bV(0)\|^{4m(k+1)}).
        \end{align}
        We also note that 
        \begin{align}\label{e.120304}
            \Eb\sup_{t\in[0,T]}Y_{V}(0,t)^m\leq C(1+\|V(0)\|^{4m})
        \end{align}
        by \eqref{e.120108} and \cite[Corollary 2.4.11]{kuksin2012mathematics}. 
    
        To estimate the $\tV$ component, applying It\^o's formula to the functional $F(t,\cdot) = t^{2k}\|\cdot\|_k^2$, we have for $k\geq 1$,
    \begin{align}\label{e.120301}
            \begin{split}
                t^{2k}\|\tV(t)\|_k^2 &= \int_0^t\left(2k r^{2k-1}\|\tV(r)\|_{k}^2 + 2r^{2k}\left\langle \LL^{2k}\tV, \nu\DD\tV-\bV\cdot\nabla_h \tV-\frac12\tV^{\perp}\left(\nabla_h ^{\perp}\cdot\bV\right)\right\rangle\right)dr\\
                & + \frac{t^{2k+1}}{2k+1}\|(I-\bP)G\|_{L_2(H,H^k)}^2+ 2\int_0^tr^{2k}\langle \LL^{2k}\tV, (I-\bP)GdW(r)\rangle. 
            \end{split}
        \end{align}
        We first estimate the case $k=1$. Let 
        \[M(t) = 2\int_0^tr^2\langle \LL^{2}\tV, (I-\bP)GdW(r)\rangle\]
        be the martingale term, whose quadratic variation satisfies 
        \begin{align*}
            [M](t)\leq 4\|(I-\bP)G\|_{L_2(H,H^1)}^2T^2\int_0^tr^{2}\|\tV(r)\|_{1}^2dr \leq 4\|(I-\bP)G\|_{L_2(H,H^1)}^2T^2\lambda\int_0^tr^{2}\|\LL^2\tV(r)\|^2dr, 
        \end{align*}
        for $t\in[0,T]$ and some $\lambda>0$ according to the Poincar\'e inequality. Letting $\gg = \frac{1}{4\|(I-\bP)G\|_{L_2(H,H^1)}^2T^2\lambda}$, one has 
        \begin{align}\label{e.120302}
            \frac{\nu\gg}{2}[M](t)\leq \frac{\nu}{2}\int_0^tr^{2}\|\LL^2\tV(r)\|^2dr.
        \end{align}
        
        For the nonlinear terms, we have 
        \begin{align*}
            \left|\langle\bV\cdot\nabla_h \tV, \LL^2\tV \rangle\right|
            &\leq \|\bV\|_{L^{6}}\|\nabla_h \tV\|_{L^3}\|\LL^2\tV\|\lesssim \|\bV\|_{\frac23}\|\|\nabla_h \tV\|^{\frac12}\|\LL^2\tV\|^{\frac32}\\
            &\leq C\|\bV\|_{\frac23}^{4}\|\nabla_h \tV\|^{2}+\frac{\nu}{8}\|\LL^2\tV\|^{2}\\
            &\leq C\|\bV\|^{\frac43}\|\bV\|_1^{\frac83}\|\nabla \tV\|^{2}+\frac{\nu}{8}\|\LL^2\tV\|^{2}
        \end{align*}
        due to the interpolation $\|u\|_{L^3}\lesssim \|u\|^{\frac12}\|u\|_1^{\frac12}$ in 3D and Sobolev embedding $H_{\frac23}\subset L^6$ in 2D, and 
        \begin{align*}
            |\langle\tV^{\perp}\nabla_h ^{\perp}\cdot\bV, \LL^2\tV\rangle|&\lesssim \|\tV\|_{L^4}\|\nabla_h  \bV\|_{L^4}\|\LL^2\tV\|\\
            &\lesssim\|\tV\|^{\frac14}\|\tV\|_{1}^{\frac34}\|\bV\|_{1}^{\frac12} \|\bV\|_{2}^{\frac12} \|\LL^2\tV\|\\
            &\leq C\|\tV\|^{\frac12}\|\bV\|_1\|\tV\|_1^{\frac32}\|\bV\|_2+\frac{\nu}{8}\|\LL^2\tV\|^{2}
        \end{align*}
        due to the interpolation $\|u\|_{L^4}\lesssim \|u\|^{\frac14}\|u\|_1^{\frac34}$ in 3D and $\|u\|_{L^4}\lesssim \|u\|^{\frac12}\|u\|_1^{\frac12}$ in 2D. Therefore, 
        \begin{align*}
            &\int_0^t2r^2\left\langle \LL^{2}\tV, -\bV\cdot\nabla_h \tV-\frac12\tV^{\perp}\left(\nabla_h ^{\perp}\cdot\bV\right)\right\rangle dr\\
            &\leq CT^{\frac23}\sup_{t\in[0,T]}\|\bV\|^{\frac43}(t\|\bV\|_1^2)^{\frac43}\int_0^t\|\nabla \tV\|^2dt + CT^{\frac12}\sup_{t\in[0,T]}\|\tV\|^{\frac12}(t^{\frac12}\|\bV\|_1)(t\|\bV\|_2)\int_0^t\|\tV\|_{1}^{\frac32} dr\\
            &\quad + \frac{\nu}{2}\int_0^tr^2\|\LL^2\tV\|^2dr \\
            &\leq CT^{\frac23}\sup_{t\in[0,T]}Y_{\bV}(0,t)^{\frac23}Y_{\bV}(1,t)^{\frac43}Y_V(0,t)+CT^{\frac34}\sup_{t\in[0,T]}Y_V(0,t)Y_{\bV}(1,t)^{\frac12}Y_{\bV}(2,t)^{\frac12}\\
            &\quad + \frac{\nu}{2}\int_0^tr^2\|\LL^2\tV\|^2dr.\\
        \end{align*}
        Combining this with \eqref{e.120301} and \eqref{e.120302}, we obtain
        \begin{align*}
            Z_{\tV}(1,t):=t^2\|\tV(t)\|_1^2+\nu\int_0^tr^2\|\LL^2\tV(r)\|^2dr\leq K_T + M(t) -\frac{\nu\gg}{2}[M](t),
        \end{align*}
        where 
        \begin{align*}
            K_T &= \frac{2T}{\nu}\sup_{t\in[0,T]}Y_{V}(0,t)+T^{3}\|G\|_{L_2(H,H^1)}^2\\
            &\quad +CT^{\frac23}\sup_{t\in[0,T]}Y_{\bV}(0,t)^{\frac23}Y_{\bV}(1,t)^{\frac43}Y_V(0,t)+CT^{\frac34}\sup_{t\in[0,T]}Y_V(0,t)Y_{\bV}(1,t)^{\frac12}Y_{\bV}(2,t)^{\frac12}.
        \end{align*}
        Therefore, by the exponential martingale inequality, we have 
        \begin{align}\label{e.120305}
            \Pb\left(\sup_{t\in[0,T]}Z_{\tV}(1,t) - K_T\geq R\right)\leq \Pb\left(\sup_{t\in[0,T]}\left(\nu\gg M(t) - \frac{(\nu\gg)^2}{2}[M](t)\right)\geq \nu\gg R\right)\leq e^{-\nu\gg R}.
        \end{align}
         Note that for non-negative random variables $a$ and $b$ and $m\geq 1$, one has 
        \begin{align*}
            \Eb a^m\leq 2^m\Eb(a-b)^m\mathbf{1}_{\{a>b\}} + 2^m\Eb b^m = 2^m\int_0^{\infty}\Pb(a-b>R^{1/m})dR + 2^m\Eb b^m. 
        \end{align*}
        Applying this inequality with \eqref{e.120305}, \eqref{e.120303}, and \eqref{e.120304} we have 
        \begin{align}\label{e.120306}
            \Eb \sup_{t\in[0,T]}Z_{\tV}(1,t)^m\leq CP(\|V_0\|),
        \end{align}
        where $P(\|V_0\|)$ is some polynomial in $\|V_0\|$.

        Now we estimate the $H^2$ norm of $\tV$. For the nonlinear terms we have 
        \begin{align*}
            \left|\langle\bV\cdot\nabla_h \tV, \LL^4\tV \rangle\right|&=\left|\langle \LL(\bV\cdot\nabla_h \tV), \LL^3\tV \rangle\right|\\
            &\lesssim \|\LL^3\tV\|\left(\|\LL\bV\|_{L^4}\|\nabla_h  \tV\|_{L^4}+\|\LL\nabla_h \tV\|\|\bV\|_{L^{\infty}}\right)\\
            &\lesssim \|\LL^3\tV\|\left(\|\bV\|_{2}\|\LL^2 \tV\|^{\frac34}\|\tV\|_1^{\frac14}+\|\LL^2\tV\|\|\bV\|_{2}\right)\\
            &\leq C\left(\|\bV\|_{2}^2\|\LL^2 \tV\|^{\frac32}\|\tV\|_1^{\frac12}+\|\LL^2\tV\|^2\|\bV\|_{2}^2\right) + \frac{\nu}{8}\|\tV\|_3^2, 
        \end{align*}
        and  similarly, 
        \begin{align*}
            |\langle\tV^{\perp}\nabla_h ^{\perp}\cdot\bV, \LL^4\tV\rangle| &= |\langle\LL(\tV^{\perp}\nabla_h ^{\perp}\cdot\bV), \LL^3\tV\rangle|\\
            &\lesssim \|\LL^3\tV\|\left(\|\LL\tV\|_{L^4}\|\nabla_h  \bV\|_{L^4}+\|\LL\nabla_h \bV\|\|\tV\|_{L^{\infty}}\right)\\
            &\lesssim \|\LL^3\tV\|\left(\|\bV\|_{2}\|\LL^2 \tV\|^{\frac34}\|\tV\|_1^{\frac14}+\|\bV\|_2\|\LL^2\tV\|\right)\\
            &\leq C\left(\|\bV\|_{2}^2\|\LL^2 \tV\|^{\frac32}\|\tV\|_1^{\frac12}+\|\LL^2\tV\|^2\|\bV\|_{2}^2\right) + \frac{\nu}{8}\|\tV\|_3^2.
        \end{align*}
    Then similar to the estimate for $k=1$, from the estimates for the nonlinear terms and \eqref{e.120301} we have
    \begin{align}\label{e.120307}
        Z_{\tV}(2,t): = t^4\|\tV\|_2^2+\nu\int_0^tr^4\|\LL^3\tV\|^2dr\leq L_T + N(t)-\frac{\nu\gg}{2}[N]_t, 
    \end{align}
    where 
    \[N(t) = 2\int_0^tr^{4}\langle \LL^{4}\tV, (I-\bP)GdW(r)\rangle, \quad \gg = \frac{1}{4\|(I-\bP)G\|_{L_2(H,H^2)}^2T^4\lambda}\]
    where the constant $\lambda$ is from the Poincar\'e inequality, and 
    \begin{align*}
        L_T &= C(T)\left(\sup_{t\in[0,T]}Z_{\tV}(1,t) + \|G\|_{L_2(H,H^2)}^2+\sup_{t\in[0,T]}Y_{\bV}(2,t)Z_{\tV}(1,t)\right).
    \end{align*}
    The desired inequality then follows from the exponential martingale inequality argument as in deriving \eqref{e.120306} and estimates \eqref{e.120303}, \eqref{e.120306} and \eqref{e.120307}. 
    
\end{proof}

\begin{lemma}[Higher order estimates]\label{l.080902}
    Assume $v$ is a strong solution to \eqref{PE-system} with deterministic initial data $v_0\in H^2$ and $\sigma\in L_2(H,H^2)$. Then for any $T>0$ and $R>e^e$, 
    \begin{align*}
        \Pb\left(\sup_{t\in[0,T]}\|v(t)\|_{2}^2+\nu\int_0^T\|v(t)\|_{2+1}^2dt\geq R\right)\leq \frac{C(T,\|v_0\|_{2},\sigma)}{\log\log\log R}.
    \end{align*}
\end{lemma}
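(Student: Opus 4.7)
The plan is to follow the iterative approach of \cite{glatt2014existence}, which upgrades polynomial moments at $L^2$ to logarithmic moments at $H^1$, and then to triple-logarithmic moments at $H^2$, by applying It\^o's formula to suitably damped concave energy functionals at each Sobolev level. The final probabilistic tail bound then follows by Markov's inequality.

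The starting point is the $L^2$ estimate from \cref{l.080901}, which yields polynomial moments $\Eb\sup_{[0,T]}\|v\|^{2p} + \Eb(\int_0^T\|v\|_1^2\,dt)^p \leq C$. To obtain an $H^1$ bound, I would apply It\^o's formula to $\Phi_1(X_t) := \log(e+X_t)$ where $X_t := \|v(t)\|_1^2 + \nu\int_0^t\|v\|_2^2\,ds$. Using the PE-adapted nonlinear estimate $|\langle v\cdot\nabla v + w(v)\partial_z v, -\Delta v\rangle| \leq C\|v\|_{L^6}^6(1+\|v\|_1^2) + \frac{\nu}{2}\|v\|_2^2$ (which exploits divergence-free cancellations together with anisotropic Sobolev inequalities), the factor $(e+X_t)^{-1}$ reduces the drift to $C\|v\|_{L^6}^6 \lesssim C\|v\|_1^6$. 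Since $\int_0^T\|v\|_1^2\,ds$ has polynomial moments, a Burkholder--Davis--Gundy control of the resulting martingale term gives $\Eb\sup_{[0,T]}\log(e+X_t) \leq C$.

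The third, and most delicate, step is the $H^2$ estimate. Setting $Y_t := \|v(t)\|_2^2 + \nu\int_0^t\|v\|_3^2\,ds$, I would apply It\^o's formula to $\Phi_3(Y_t) := \log\log\log(e^{e^e}+Y_t)$. The crucial inner product $|\langle \Delta(v\cdot\nabla v + w(v)\partial_z v), \Delta v\rangle|$ must be bounded by a quantity of the form $C\|v\|_1^{a}\|v\|_2^{b}\|v\|_3^2$ with $b$ small enough that the factor $\|v\|_2^b \lesssim Y_t^{b/2}$ can be absorbed by the triple-logarithmic damping coefficient $(e^{e^e}+Y_t)^{-1}(e^e+\log(\cdot))^{-1}(e+\log\log(\cdot))^{-1}$, while $\|v\|_1^a$ is controlled via the $H^1$ logarithmic moment from the previous step. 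Combined with uniform bounds on the damped It\^o correction and martingale terms (both of which require $\sigma\in L_2(H,H^2)$), this yields $\Eb\sup_{[0,T]}\log\log\log(e^{e^e}+Y_t) \leq C$; the stated estimate then follows from Markov's inequality applied to the event $\{Y_T\geq R\}$.

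The main obstacle is controlling the nonlinear contribution to the $H^2$ drift arising from $w(v)\partial_z v$, since $w(v) = -\int_0^z \nabla_h\cdot v\,d\zeta$ exhibits a loss of one horizontal derivative, which is characteristic of the primitive equations. Naively this term grows like $\|v\|_2^3$, and no polynomial damping would close. The resolution requires the anisotropic Agmon/Ladyzhenskaya estimates used in the deterministic PE well-posedness theory, which redistribute derivatives between the horizontal and vertical variables and reduce the nonlinearity to a trilinear form compatible with triple-logarithmic damping. This obstruction is precisely why the scaling of the tail bound is \emph{triple} logarithmic, and ultimately why unique ergodicity for \eqref{PE-system} remains open.
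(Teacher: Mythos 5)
Your high-level plan (logarithmic functionals, Markov's inequality) shares some spirit with the paper, but the \emph{mechanism} you propose for the $H^2$ level---applying It\^o's formula directly to $\log\log\log\left(e^{e^e}+Y_t\right)$ with $Y_t=\|v(t)\|_2^2+\nu\int_0^t\|v\|_3^2\,ds$---is not what the paper does, and it contains a genuine gap that would prevent it from closing.

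The gap: the $H^2$-level nonlinear drift for the PE is $C\|\Lambda v\|^2\|\Lambda^2 v\|^4$ (after Young's inequality absorbs $\nu\|\Lambda^3 v\|^2$). Since $\|\Lambda^2 v\|^4\lesssim Y_t^2$ while the damping coefficient $\Phi_3'(Y_t)$ decays only like $1/\big(Y_t\log Y_t\log\log Y_t\big)$, the damped drift is of order $\|\Lambda v\|^2\,Y_t/(\log Y_t\log\log Y_t)$, which grows without bound in $Y_t$. No finite iteration of $\log$ can absorb a drift that is \emph{quadratic} in $Y_t$: such damping only cancels a single power, whereas the $w\partial_z v$ term and its derivative loss force an extra power of $\|v\|_2^2$. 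Your proposed estimate $C\|v\|_1^a\|v\|_2^b\|v\|_3^2$ with $b$ small is unattainable here: after converting the $\|v\|_3$ factor by Young's inequality (or interpolating $\|\Lambda^2 v\|\lesssim\|\Lambda v\|^{1/2}\|\Lambda^3 v\|^{1/2}$), one is driven to $b\geq 4$. Moreover, your plan to ``control $\|v\|_1^a$ via the $H^1$ logarithmic moment'' cannot work either: the $H^1$ step yields only $\Eb\log(e+\sup_t\|v\|_1^2)<\infty$, from which no positive polynomial moment of $\sup_t\|v\|_1$ is available.

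What the paper actually does is different and avoids this obstruction entirely. It introduces the stopping time $\tau_\kappa=\inf\{T: \int_0^T(\|\Lambda v\|^2+\|\Lambda^2 v\|^2+\|\Lambda v\|^2\|\Lambda^2 v\|^2)\,dt>\kappa\}$ and cites the tail bound $\Pb(\tau_\kappa<T)\lesssim 1/\log\log\kappa$ from \cite[Lemma~2.5]{glatt2014existence}. On the event $\{t\le T\wedge\tau_\kappa\}$ the Gr\"onwall factor arising from the $\|\Lambda v\|^2\|\Lambda^2 v\|^2$ multiplier is at most $\exp(C\kappa)$, giving $\Eb\sup_{t\le T\wedge\tau_\kappa}\|\Lambda^2 v(t)\|^2\lesssim\exp(C\kappa)$. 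Markov's inequality then yields $\Pb(\cdot\ge R)\lesssim \exp(C\kappa)/R+1/\log\log\kappa$, and the triple logarithm emerges purely from optimizing $\kappa\sim\log R$. The lesson is that the $H^2$ energy bound is allowed to be \emph{exponentially} bad in $\kappa$, because the stopping time's tail is itself doubly logarithmic; the log-damped It\^o functional is used in \cite{glatt2014existence} only at the $H^1$ level, not iterated to the $H^2$ level as you propose.
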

\begin{proof}
    For $\kappa>0$ we denote by 
    \[
     \tau_\kappa:= \inf\limits_{T\geq 0} \left\{\int_0^T (\|\Lambda v\|^2 + \|\Lambda^2 v\|^2 + \|\Lambda v\|^2 \|\Lambda^2 v\|^2) dt > \kappa \right\}.
    \]
    Recall from \cite[Lemma 2.5]{glatt2014existence} that 
    \[
     \mathbb P(\tau_\kappa<T) \leq \frac{C(T,\|v_0\|_1,\sigma)}{\log\log \kappa}.
    \]

    Now we apply It\^o lemma for the $H^2$ estimate of \eqref{PE-system} and obtain
    \begin{align}\label{eqn:B3-1}
        d \|\Lambda^2 v\|^2 + 2\nu \|\Lambda^3 v\|^2 dt = -2 \langle v\cdot \nabla_h  v + w\partial_z v, \Lambda^4 v
        \rangle dt + \|\Lambda^2 \sigma\|^2 dt + 2\langle \Lambda^2 \sigma, \Lambda^2 v\rangle dW.
    \end{align}
    The nonlinear term can be bounded as
    \begin{align*}
        \left|-2 \langle v\cdot \nabla_h  v + w\partial_z v, \Lambda^4 v
        \rangle\right|
        &\leq C \|\Lambda^3 v\| \|\Lambda (v\cdot \nabla_h  v + w\partial_z v) \|
        \\
        &\leq C \|\Lambda^3 v\|(\|\Lambda v\|^{\frac12} \|\Lambda^2 v\|^{\frac32} + \|\Lambda v\|^{\frac12} \|\Lambda^2 v\| \|\Lambda^3 v\|^{\frac12})
        \\
        &\leq \nu \|\Lambda^3 v\|^2 + C\|\Lambda v\|^{2} \|\Lambda^2 v\|^4.
    \end{align*}
    As the noise is additive and regular enough, we can apply the Burkholder-Davis-Gundy inequality for any $0\leq \rho_a \leq \rho_b \leq T\wedge \tau_\kappa$ to obtain
    \begin{align*}
        &\mathbb E \sup\limits_{t\in[\rho_a,\rho_b]} \left|\int_{\rho_a}^{t} \langle \Lambda^2 \sigma, \Lambda^2 v\rangle dW_\xi \right|
        \leq C\mathbb E \left|\int_{\rho_a}^{\rho_b} \|\Lambda^2 v\|^2 d\xi \right|^{\frac12}
        \leq  \frac12 \mathbb E \sup\limits_{t\in[\rho_a,\rho_b]} \|\Lambda^2 v(t)\|^2 + C\mathbb E \int_{\rho_a}^{\rho_b} 1 d\xi.
    \end{align*}
    Therefore, integrate \eqref{eqn:B3-1} from $\rho_a$ to $t$, take a supremum over $[\rho_a,\rho_b]$, and then take expectation, we have
    \begin{align*}
        &\mathbb E \sup\limits_{t\in[\rho_a,\rho_b]} \|\Lambda^2 v(t)\|^2 + \nu\mathbb E\int_{\rho_a}^{\rho_b} \|\Lambda^3 v(t) \|^2 dt
        \\
        &\leq 2\mathbb E \|\Lambda^2 v(\rho_a)\|^2 + C\mathbb E \int_{\rho_a}^{\rho_b} (1+ \|\Lambda v(t)\|^{2} \|\Lambda^2 v(t)\|^2 \|\Lambda^2 v(t)\|^2) dt.
    \end{align*}
    Thanks to the definition of $\tau_\kappa$, using the Gr\"onwall inequality, we have
    \begin{align*}
        \mathbb E \sup\limits_{t\in[0,T\wedge \tau_\kappa]} \|\Lambda^2 v(t)\|^2 + \nu \mathbb E \int_0^{T\wedge \tau_\kappa} \|\Lambda^3 v(t)\|^2 dt \leq C\exp(C\kappa),
    \end{align*}
    where $C=C(T,\|v_0\|_2,\sigma)$. Now we use Markov's inequality to estimate
    \begin{align*}
        &\mathbb P\left(\sup_{t\in[0,T]}\|\Lambda^2 v(t)\|^2+\nu\int_0^T\|\Lambda^3 v(t)\|^2dt\geq R\right)
        \\
        &\leq  \frac{1}{R} \mathbb E\left(\sup_{t\in[0,T\wedge \tau_\kappa]}\|\Lambda^2 v(t)\|^2+\nu\int_0^{T\wedge \tau_\kappa}\|\Lambda^3 v(t)\|^2dt \right) + \mathbb P(\tau_\kappa < T)
        \\
        &\leq \frac{C\exp(C\kappa)}{R} + A,
    \end{align*}
    where 
    \[
      \mathbb{P}(\tau_\kappa < T)\leq A= \frac{C_1}{\log\kappa}(1+\gamma) + \frac{C_1}{\log\gamma}
    \]
    follows from \cite[Lemma 2.5]{glatt2014existence} and $C_1=C_1(T,\|v_0\|_1,\sigma)$ and $\gamma$ satisfies $\gamma>2+\|v_0\|_{L^6}^4$. Now we can choose for $R>0$ sufficiently large, $\kappa=\frac{\log(\sqrt{R})}C$, and $\gamma=\sqrt{\log\kappa}$, so that
    \begin{align*}
        &\mathbb P\left(\sup_{t\in[0,T]}\|\Lambda^2 v(t)\|^2+\nu\int_0^T\|\Lambda^3 v(t)\|^2dt\geq R\right)
        \\
        &\leq \frac{C}{\sqrt{R}}+ C_1\left(\frac1{\log\kappa} + \frac1{\sqrt{\log\kappa}} + \frac1{\log\log \kappa} \right) \leq \frac{C}{\log\log\log R},
    \end{align*}
    with $C=C(T,\|v_0\|_2,\sigma)$.

\end{proof}

\end{document}